\numberwithin{equation}{section}          
\newtheorem{thm}{Theorem}
\numberwithin{thm}{section}
\newcommand{\rubrik}{}
\newtheorem{prop}[thm]{Proposition}
\newtheorem{lem}[thm]{Lemma}
\theoremstyle{definition}
\newtheorem{defn}[thm]{Definition}
\newtheorem{ex}[thm]{Example}
\theoremstyle{remark}
\newtheorem{rem}[thm]{Remark}     
\theoremstyle{plain}
\DeclareRobustCommand{\SkipTocEntry}[5]{}
\newcommand{\pr}{{\mathrm{pr}}}
\newcommand{\jap}[1] {\langle{#1}\rangle}
\newcommand{\pfe}{\Phi}
\newcommand{\pp}{{\prime\prime}}
\newcommand{\id}{\mathrm{id}}
\newcommand{\BB}{{\mathbb{B}}}
\newcommand{\BBd}{{\BB^d}}
\newcommand{\BBs}{{\BB^s}}
\newcommand{\BBdo}{{\left(\BBd\right)^o}}
\newcommand{\NN}{{\mathbb{N}_0}}
\newcommand{\NNd}{{\mathbb{N}_0^d}}
\newcommand{\NNz}{{\mathbb{N}}}
\newcommand{\RR}{{\mathbb{R}}}
\newcommand{\RRd}{{\RR^d}}
\newcommand{\RRs}{{\RR^s}}
\newcommand{\RRdz}{{\RRd\setminus\{0\}}}
\newcommand{\SSS}{{\mathbb{S}}}
\newcommand{\SSSd}{{\mathbb{S}^{d-1}}}
\newcommand{\ZZ}{{\mathbb{Z}}}
\newcommand{\cl}{{\mathrm{cl}}}
\newcommand{\mm}{{{m_e,m_\psi}}}
\newcommand{\nn}{{{n_e,n_\psi}}}
\newcommand{\SG}{{\mathrm{SG}}}
\newcommand{\SGmm}{{\mathrm{SG}^{\mm}}}
\newcommand{\SGcl}{\mathrm{SG}_\mathrm{cl}}
\newcommand{\SGzz}{{\mathrm{SG}^{0,0}}}
\newcommand{\Sm}{{\mathscr{C}^\infty}}
\newcommand{\Sw}{{\mathscr{S}}}
\newcommand{\SwRRd}{{\mathscr{S}\left(\RRd\right)}}
\newcommand{\Swd}{{\mathscr{S}^\prime}}
\newcommand{\SwdRRd}{{\mathscr{S}^\prime\left(\RRd\right)}}
\newcommand{\cL}{\mathcal{L}} 
\newcommand{\cA}{\mathcal{A}} 
\newcommand{\Op}{\operatorname{Op}}
\newcommand{\WSG}{\mathcal{W}_\SG}
\newcommand{\Wt}{\widetilde{\mathcal{W}}_\SG}
\newcommand{\Wte}{\widetilde{\mathcal{W}}^e_\SG}
\newcommand{\Wtp}{\widetilde{\mathcal{W}}^\psi_\SG}
\newcommand{\Wtpe}{\widetilde{\mathcal{W}}^{\psi e}_\SG}
\newcommand{\WFcl}{{\mathrm{WF}_\mathrm{cl}}}
\newcommand{\WFSGt}{\widetilde{\mathrm{WF}}_\SG}
\newcommand{\lpt}{{\tilde{\lambda}_\varphi}}
\newcommand{\Lp}{\Lambda_\varphi}
\newcommand{\Lpt}{\tilde{\Lambda}_\varphi}
\newcommand{\Lpte}{\tilde{\Lambda}_\varphi^e}
\newcommand{\Lptp}{\tilde{\Lambda}_\varphi^\psi}
\newcommand{\Lt}{\widetilde{\Lambda}}
\newcommand{\Lte}{\widetilde{\Lambda}^e}
\newcommand{\Ltp}{\widetilde{\Lambda}^\psi}
\newcommand{\Ltpe}{\widetilde{\Lambda}^{\psi e}}
\newcommand{\Lpe}{{\Lambda_\varphi^e}}
\newcommand{\Lpp}{{\Lambda_\varphi^\psi}}
\newcommand{\Lppe}{{\Lambda_\varphi^{\psi e}}}
\newcommand{\Lme}{{\Lambda^{e}}}
\newcommand{\Lmp}{{\Lambda^{\psi}}}
\newcommand{\Lmpe}{{\Lambda^{\psi e}}}
\newcommand{\Cp}{\mathcal{C}_\varphi}
\newcommand{\Cpt}{\widetilde{\mathcal{C}}_\varphi}
\newcommand{\Cpe}{\mathcal{C}_\varphi^e}
\newcommand{\Cpp}{\mathcal{C}_\varphi^\psi}
\newcommand{\Cppe}{\mathcal{C}_\varphi^{\psi e}}
\newcommand{\Cpte}{\Cpt^e}
\newcommand{\B}{\mathcal{B}}
\newcommand{\Bt}{\widetilde{\mathcal{B}}}
\newcommand{\Bte}{\Bt^e}
\newcommand{\Btp}{\Bt^\psi}
\newcommand{\Btpe}{\Bt^{\psi e}}
\newcommand{\Et}{\widetilde{\mathcal{E}}}
\newcommand{\Sc}{\mathcal{S}} 
\newcommand{\Sce}{\mathcal{S}^e} 
\newcommand{\Scp}{\mathcal{S}^\psi} 
\newcommand{\Scpe}{\mathcal{S}^{\psi e}} 
\newcommand{\Sct}{\widetilde{\mathcal{S}}} 
\newcommand{\Scte}{\widetilde{\mathcal{S}}^e} 
\newcommand{\Sctp}{\widetilde{\mathcal{S}}^\psi} 
\newcommand{\Sctpe}{\widetilde{\mathcal{S}}^{\psi e}} 
\author[Sandro Coriasco and Ren\'e Schulz]{Sandro Coriasco and Ren\'e Schulz}
\begin{document}

\title[$\SG$-Lagrangian submanifolds]{$\SG$-Lagrangian submanifolds\\and their parametrization}



\begin{abstract}
	We continue our study of tempered oscillatory integrals $I_\varphi(a)$, 
	here investigating the link with a suitable
	\textit{symplectic structure at infinity}, which we
	describe in detail. We prove adapted versions of the classical theorems,
	which show that tempered distributions
	of the type $I_\varphi(a)$ are indeed linked to suitable \textit{Lagrangians
	extending to infinity}, that is, extending up to the boundary and in particular the corners of a 
	compactification of $T^*\RRd$ to $\BBd\times\BBd$.
	In particular, we show that such Lagrangians can always be parametrized by 
	non-homogeneous, regular phase functions, globally defined on some $\RRd\times\RRs$.
	We also state how two such phase functions parametrizing the same Lagrangian
	may be considered equivalent \textit{up to infinity}.
\end{abstract}

\maketitle

\tableofcontents

\pagestyle{fancy}
\fancyhf{}
\fancyhead[RO,LE]{\thepage}
\fancyfoot[LO]{}
\fancyfoot[RE]{}
\fancyhead[RE]{\slshape\nouppercase{\rightmark}}
\fancyhead[LO]{\slshape\nouppercase{\leftmark}}
\renewcommand{\headrulewidth}{0.5pt}

\normalem


\section{Introduction}
\label{sec:intro}

In his groundbreaking paper of 1971, \cite{Hormander6}, H"ormander established a calculus of \textit{Fourier integral operators} (FIOs) in terms of their Schwartz kernels, given by Lagrangian distributions, see also \cite{Duistermaat, Hormander3, Hormander4}. The theory then proved to have many important applications in various branches of mathematics, and especially in the theory of partial differential equations.\\
A main feature of that theory is the possibility to pass from distributional expressions given by oscillatory integrals in local coordinates $x\in \RRd$, to invariantly defined geometric objects on manifolds.
In local coordinates on some manifold $X$, an oscillatory integral is of the form
$$I_\varphi(a)=\int e^{i\varphi(x,\theta)} a(x,\theta)\,d\theta,$$
with the \textit{phase function} $\varphi$ being smooth and homogeneous of degree $1$ in $\theta$, and satisfying certain ellipticity conditions. The \textit{amplitude} $a\in S^{m}_{1,0}(X\times\RRd)$ is instead a \textit{H\"ormander symbol} of order $m\in\RR$.\\
The connection mentioned above is established as follows. It is possible to associate with $\varphi$ its \textit{set of stationary points},
\begin{equation}
	\label{eq:lphi}
	\Lambda_\varphi=\big\{\big(x,\nabla_x\varphi(x,\theta)\big)\,|\, \nabla_\theta\varphi(x,\theta)=0\big\},
\end{equation}
which contains all information about the position of singularities of the corresponding class of oscillatory integrals. Namely, we have 
$$\bigcup_{a\in S^m_{1,0}} \WFcl(I_\varphi(a))=\Lambda_\varphi,$$ 
where $\WFcl$ denotes H\"ormander's classical \textit{wave front set}. For a non-degenerate phase function, 
$\Lambda_\varphi$ turns out to be a conic Lagrangian submanifold of $T^*X\setminus\{0\}$.\\
Conversely, for any conic Lagrangian submanifold $\Lambda$, we may find a local phase function $\varphi$ parametrizing it, that is, in a suitable neighbourhood of any $p\in\Lambda$, $\Lambda=\Lambda_\varphi$, with $\Lambda_\varphi$ given by \eqref{eq:lphi}. The symbol $a(x,\theta)$ may be recovered - up to terms of lower order - by means of an associated, invariantly
defined, \textit{principal symbol map}. Thus, one is able to pass from oscillatory integrals associated with a phase function to the invariant class of \textit{Lagrangian distributions}, associated with a corresponding Lagrangian submanifold of $T^*X\setminus\{0\}$. This opened up the theory of local FIOs to the possibility of being extended to manifolds.\\
Using this identification, many issues of the theory, such as criteria for the composition of two FIOs being again a FIO, can be expressed in terms of geometrical conditions on the involved Lagrangian submanifolds. This theory of Fourier integral operators is well suited to be applied either in small open neighbourhoods
of points, or on compact manifolds. In order to treat non-compact manifolds at the same level of efficiency, bounds on the involved distributional kernels, such as temperedness, need to be taken into account.\\
While the subject of $L^p$-continuity of FIOs on $\RRd$ has been studied in many global classes of FIOs, by imposing various bounds on the (derivatives) of the involved phase functions and symbols, see e.g. \cite{Asada}, \cite{AF}, \cite{CNR1}, \cite{CNR2} and \cite{RuSu}, the only approaches known to us that generalize the classical propagation of singularities are in the framework of the $\SG$-classes (or $^{sc}$-classes).\footnote{While \cite{CNR3} is concerned with the propagation of singularities under the action of Fourier Integral Operators, the wave front set under consideration is not a generalization of the classical one, but the independent notion of \textit{Gabor wave front set}.} This class of symbols was introduced by H.O.~Cordes \cite{Cordes} and C.~Parenti \cite{Parenti}, see also R.~Melrose's \textit{scattering calculus} in \cite{Melrose1,MZ}. In this framework it is possible to define a wave front set which turns out to be a generalization of the classical wave front set, in the sense that it also encodes singularities ``at infinity'', that is, those caused, for instance, by growth/decay and fast oscillations for $|x|\to+\infty$, see \cite{Cordes,CJT2,CM,Melrose1}. Propagation results in the scattering approach were given in \cite{HV,HW}, where operators with kernels that are \textit{Legendrian distributions}, see \cite{MZ}, are discussed. \\
In \cite{Coriasco1,Coriasco2}, see also \cite{Andrews,CR}, $\SG$-FIOs were introduced on $\RRd$ and their propagation of singularities is studied in \cite{CJT4,CM}.\\
The approach pursued in the present paper is a further generalization of the classical theory in terms of the $\SG$-calculus on $\RRd$, focusing on the properties of the involved phase functions and of the corresponding generalized Lagrangian submanifolds. The advantage is that our results can be formulated in terms similar to the classical ones, while still allowing a broad class of phase functions and including ``singularities at infinity''. An example of a distribution that may be treated from this point of view is the so-called two-point function arising in the study of the Klein-Gordon equation.\\
We note that the approach of \cite{HV,HW,Melrose2}, which is formulated in the language of $\textrm{sc}$-geometry on \textit{asymptotically flat}, or \textit{scattering} manifolds, while being related to the present analysis, is different from it. A major distinction is that our typical phase functions give rise to Lagrangian type singularities in all three components of the compactified cotangent bundle and the associated distributions are not smooth functions like the Legendrian distributions in \cite{MZ}. In fact, the above mentioned two-point function is not a smooth function, thus not a Legendrian distribution in the sense of \cite{MZ}, but admits Lagrangian type singularities in the interior as well as Legendrian type singularities at infinity.
\\
In \cite{CoSc} the authors have established a theory of tempered oscillatory integrals, which may be viewed as the local version of distributions arising from the geometric structures presented below. The involved objects extend the theory of classical oscillatory integrals, in the sense that they are tempered, and that their \textit{global} singularities may be understood in terms of the \textit{global set of stationary points} of their phase functions. The phase functions are assumed to be (inhomogeneous) $\SG$-symbols, whose derivatives satisfy an ellipticity condition.
Here the theory is complemented with the geometric picture, under the (natural) additional assumption that the phase function $\varphi$ is $\SG$-classical, that is a $\SG$-symbol of order $(1,1)$ which admits polyhomogeneous expansions. We note that even in this case the distributions under consideration differ from Legendrian distribution. In fact, by \cite[Proposition 10]{MZ}, the singularities of the Fourier transforms of Legendrian distributions on Euclidean spaces are contained in compact sets, a feature that is not true for our class of distributions. We discuss how the global set of stationary points of a non-degenerate $\SG$-classical phase functions form generalized Lagrangian submanifolds, which are submanifolds of a compactification of $T^*\RRd$, a manifold with corners, which turns out to be the natural environment within which to perform our analysis. In particular, we prove that the generalized Lagrangian submanifolds mentioned above can always be parametrized by $\SG$-classical phase functions and examine when two such parametrizations may be regarded as equivalent.\\
\\
We mention that some of the results of this paper have appeared in the thesis of the second author \cite{Schulz}. In subsequent works the authors will address the actual calculus of $\SG$-Lagrangian distributions and FIOs, with emphasis to the principal symbol maps and applications to differential equations. \\
\\
The paper is organized as follows.
In Section \ref{sec:prel} we recall various preliminary definitions and results. In particular, we list some of the basic element of the $\SG$-calculus in Subsection \ref{sec:sgbasics}, and give special emphasis to the subcalculus of $\SG$-classical symbols in Subsection \ref{subs:sgcl}. In particular, we review in detail how the latter may be expressed in terms of an embedding $\iota:\RRd\hookrightarrow\BBd$, compactifying $\RRd$ into the closed unit ball centered in the origin. In Subsection \ref{sec:toi} we recall the definition of tempered oscillatory integrals and the results concerning their singularities, studied in detail in \cite{CoSc}.\\
In Section \ref{sec:subm} we establish how the global set of (possible) singularities $\Lpt$ of a family of oscillatory integrals associated with a fixed $\SG$-phase function $\varphi$ may be regarded as a generalized Lagrangian submanifold. In Subsection \ref{subs:subm} we reformulate the results of Subsection \ref{sec:toi} in terms of 
$\iota$ and subsets of the ball $\BBd$. In Subsection \ref{sec:sympl} we associate these objects with the principal symbol of $\varphi$. Furthermore, we introduce a symplectic structure ``at infinity''. Finally, 
relying on the previous analysis, we show how $\Lpt$ may be regarded as a generalized Lagrangian in Subsection \ref{subs:sglagr}.\\
Our main theorems are proved in Section \ref{sec:sgpf}, where we show the converse of the
result proved in Subsection \ref{subs:sglagr}. Namely, given any $\SG$-Lagrangian $\tilde{\Lambda}$, it is always possible to find a $\SG$-classical phase function $\varphi$ locally parametrizing it, that is, 
$\tilde{\Lambda}=\Lpt$ in suitable neighbourhoods of points $p\in\Lambda$. Subsequently, we also prove a
theorem on the equivalence of phase functions in this context.\\
Finally, for the convenience of the reader, in the Appendix we give a summary of the differential calculus on manifolds with corners (with reference to \cite{MO}), which includes the results from that theory needed for our aims.
\subsection*{Acknowledgements}
The authors would like to express their gratitude for helpful advice and comments received  by Prof. D. Bahns, Dott. U. Battisti, Prof. B.-W. Schulze, Prof. A. Vasy and Prof. I. Witt.\\
The second author is grateful for financial support received by the German Research Foundation (Deutsche Forschungsgemeinschaft) through the Institutional Strategy of the University of Göttingen, in particular through the research
training group ``Graduiertenkolleg 1493'' and the Courant Research Center ``Higher Order Structures in Mathematics'', as well as for support by the ``Studienstiftung des deutschen Volkes''. 
The authors have been partially supported by the Gruppo Nazionale per l'Analisi Matematica, 
la Probabilit\`a e le loro Applicazioni (GNAMPA) of the Istituto Nazionale di Alta Matematica (INdAM),
grant ``Equazioni Differenziali a Derivate Parziali di Evoluzione e Stocastiche'' 
(Coordinator S. Coriasco, Dipartimento di Matematica ``G. Peano'', Universit\`a di Torino).


\section{Preliminary definitions and results}
\label{sec:prel}

\subsection{Basics of the $\SG$-calculus}
\label{sec:sgbasics}
In this section, we recall some preliminary definitions on the $\SG$-calculus. The subclass of classical symbols that admit polyhomogeneous expansions will be addressed in Section \ref{subs:sgcl}.\\

$SG$-pseudodifferential operators $A=a(x,D)=\Op(a)$ can be defined via the usual left-quantization
\[
	Au(x)= \frac{1}{(2 \pi)^{d}} \int e^{i x \cdot \xi} a(x, \xi) \hat u(\xi) d\xi,\quad u\in\Sw(\RR^d),
\]
starting from symbols $a(x,\xi) \in C^\infty(\RR^d\times\RR^d)$ with the property that, for arbitrary multiindices $\alpha,\beta\in\NNd$, there exist constants $C_{\alpha\beta}\ge0$ such that the estimates 
\begin{equation}
	\label{eq:disSG}
	|D_\xi^{\alpha}D_x^{\beta} a(x, \xi)| \leq C_{\alpha\beta} 
	\langle x\rangle^{m_e-|\beta|}\langle\xi\rangle^{m_\psi-|\alpha|}
\end{equation}
hold for fixed $m_e,m_\psi\in\RR$ and all $(x, \xi) \in \RR^d \times \RR^d$, where $\langle z \rangle=\sqrt{1+|z|^2}$, $z\in\RR^d$.
Symbols of this type belong to the class denoted by $\SG^{m_e,m_\psi}(\RR^d)$, which is a Fr\'echet space with a family of seminorms given by the ideal constants in \eqref{eq:disSG}, and the corresponding operators constitute the class
$L^{m_e,m_\psi}(\RR^d)=\Op\left(\SG^{m_e,m_\psi}(\RR^d)\right)$. In the sequel we will often simply write $\SG^{m_e,m_\psi}$ and $L^{m_e,m_\psi}$ when there can be no confusion about the spaces involved. \\
These classes of operators form a graded algebra, i.e. $L^{r_e,r_\psi}\circ L^{m_e,m_\psi}\subseteq L^{r_e+m_e,r_\psi+m_\psi}$, whose residual elements are operators with symbols in 
$$\displaystyle \SG^{-\infty,-\infty}(\RRd\times\RRd)= \bigcap_{(m_e,m_\psi) \in \RR^2} \SG^{m_e,m_\psi} (\RRd\times\RRd)=\Sw(\RR^{2d}),$$
that is, those having a Schwartz kernel in $\Sw(\RR^{2d})$, i.e. continuously mapping $\Sw^\prime(\RR^d)$ to $\Sw(\RR^d)$. An operator $A=\Op(a)\in L^{m_e,m_\psi}$ is called $SG$-elliptic if there exists $R\ge0$ such that $a(x,\xi)$ is invertible for $|x|+|\xi|\ge R$ and
\[
	a(x,\xi)^{-1}=O(\jap{x}^{-m_e}\jap{\xi}^{-m_\psi}).
\] 
Operators in $L^{m_e,m_\psi}$ act continuously from $\Sw(\RR^d)$ to itself, and extend as continuous operators from $\Sw^\prime(\RR^d)$ to itself and from $H^{s_e,s_\psi}(\RR^d)$ to $H^{s_e-m_e,s_\psi-m_\psi}(\RR^d)$, where $H^{t_e,t_\psi}(\RR^d)$, $t_e,t_\psi\in\RR$, denotes the weighted Sobolev space
\begin{align*}
  	H^{t_e,t_\psi}(\RR^d)&= \{u \in \Sw^\prime(\RR^{n}) \colon \|u\|_{t_e,t_\psi}= \|\Op(\pi_{t_e,t_\psi})u\|_{L^2}< \infty\},
 	\\
  	\pi_{t_e,t_\psi}(x,\xi)&=  \langle x\rangle^{t_e} \langle \xi \rangle^{t_\psi}.
\end{align*}
From their definition we have that $H^{s_e,s_\psi}(\RR^d)\hookrightarrow H^{r_e,r_\psi}(\RR^d)$ when $s_e\ge r_e$ and $s_\psi\ge r_\psi$, with compact embedding in case both inequalities are strict, while
\[
	\displaystyle\Sw(\RR^d)=\bigcap_{(s_e,s_\psi) \in \RR^2} H^{s_e,s_\psi}(\RR^d)
	\mbox{ and }
	\displaystyle\Sw^\prime(\RR^d)=\bigcup_{(s_e,s_\psi) \in \RR^2} H^{s_e,s_\psi}(\RR^d).
\]
An elliptic $SG$-operator $A \in L^{m_e,m_\psi}$ admits a parametrix $P\in L^{-m_\psi,-m_e}$ such that
\[
PA=I + K_1, \quad AP= I+ K_2,
\]
for suitable $K_1, K_2 \in L^{-\infty,-\infty}$, and it turns out to be a Fredholm operator.\\
We close this section by noting that $\SG$-operators may be introduced on more general spaces. In 1987, E.~Schrohe \cite{Schrohe1} introduced a class of non-compact manifolds, the so-called $SG$-manifolds, on which it is possible to transfer from $\RR^d$ the whole $SG$-calculus: in short, these are manifolds which admit a finite atlas whose changes of coordinates behave like symbols of order $(0,1)$ (see \cite{Schrohe1} for details and additional technical hypotheses). The manifolds with cylindrical ends are a special case of $SG$-manifolds, on which also the concept of $SG$-classical operator makes sense: moreover, the principal symbol of a $SG$-classical operator $A$ on a manifold with cylindrical ends $M$, in this case a triple $\sigma(A)=(\sigma_\psi(A),\sigma_e(A),\sigma_{\psi e}(A))$, has an invariant meaning on $M$, see Y.~Egorov and B.-W.~Schulze \cite{ES}, B.-W. Schulze \cite{Schulze}, R.~Melrose \cite{Melrose1,Melrose2} and Subsection \ref{subs:sgcl} below.

\subsection{Classical $\SG$-symbols}
\label{subs:sgcl}

We now introduce the subclass of the classical $\SG$ symbols
$\SG^{m_e,m_\psi}_\cl(\RR^d\times\RR^s)\subset \SG^{m_e,m_\psi}(\RR^d\times\RR^s)$.
Note that the only difference between the definition of the symbol space
$\SG^{m_e,m_\psi}(\RR^d\times\RR^s)$ and the ``standard'' $\SG$-symbols 
$\SG^{m_e,m_\psi}(\RR^d\times\RR^d)$, recalled in the Introduction, is that
we allow that the two independent variables $x,\, \xi$ belong to Euclidean spaces of (possibly) different dimensions $d,\,s,$ which naturally occurs in phase functions parametrizing Lagrangian submanifolds. In its classical formulation, the $\SG$-calculus was developed by Schulze, see \cite{Schulze}, to which we refer for most of the contents of this subsection. We begin by recalling the basic definitions and results (see also, e.g., \cite{ES,MSS06} for additional details and proofs). In the following, a $0$-excision function is a smooth function which identically vanishes in a neighbourhood of the origin, and which is
identically equal to $1$ outside a larger neighbourhood of the origin.

\begin{defn}
\label{def:sgclass-a}
\hspace{1pt}
\begin{itemize}
\item[i)]A symbol $a(x, \theta)$ belongs to the class $\SG^{m_e,m_\psi}_{\cl(x)}(\RR^d\times\RR^s)$ if there exist $a_{m_e-j, \bullet} (x, \theta)\in \mathscr{H}_x^{m_e-j}(\RR^d\times\RR^s)$, $j=0,1,\dots$, homogeneous functions of order $m_e-j$ with respect to the variable $x$, smooth with respect to the variable $\theta$, such that, for a $0$-excision function $\chi^e$,
\[
a(x, \theta) - \sum_{j=0}^{N-1}\chi^e(x) \, a_{m_e-j, \bullet} (x, \theta)\in \SG^{m_e-N, m_\psi}(\RR^d\times\RR^s), \quad N=1,2, \ldots;
\]
\item[ii)]A symbol $a(x, \theta) $ belongs to the class $\SG_{\cl(\theta)}^{m_e,m_\psi}(\RR^d\times\RR^s)$ if there exist $a_{\bullet, m_\psi-k}(x, \theta)\in \mathscr{H}_\theta^{m_\psi-k}(\RR^d\times\RR^s)$, $k=0,\,\dots$, homogeneous functions of order $m_\psi-k$ with respect to the variable $\theta$, smooth with respect to the variable $x$, such that, for a $0$-excision function $\chi^\psi$,
\[
a(x, \theta)- \sum_{k=0}^{N-1}\chi^\psi(\theta) \, a_{\bullet, m_\psi-k}(x,\theta) \in SG^{m_e, m_\psi-N}(\RR^d\times\RR^s), \quad N=1,2, \ldots
\]
\end{itemize}
\noindent
The symbols in $\SG^{m_e,m_\psi}_{\cl(x)}(\RRd\times\RRs)$ are called \textit{polyhomogeneous with respect to $x$ or $e$-polyhomogeneous}, those in $\SG^{m_e,m_\psi}_{\cl(\theta)}(\RRd\times\RRs)$ are called \textit{polyhomogeneous with respect to $\theta$ or $\psi$-polyhomogeneous}, respectively.
\end{defn}
\begin{defn}
\label{def:sgclass-b}
A symbol $a(x,\theta)$ is $\SG$-classical, and we write $a \in \SG_{\cl}^{m_e,m_\psi}(\RR^d\times\RR^s)=\SG_{\cl(x,\theta)}^{m_e,m_\psi}(\RR^d\times\RR^s)=\SG_{\cl}^{m_e,m_\psi}$, if
\begin{itemize}
\item[i)] there exist $a_{m_e-j,\bullet}(x, \theta)\in \mathscr{H}_x^{m_e-j}(\RR^d\times\RR^s)$ such that, 
for $0$-excision functions $\chi^e$, $\chi^e(x) \, a_{m_e-j, \bullet} (x, \theta)\in \SG_{\cl(\theta)}^{m_e-j, m_\psi}(\RR^d\times\RR^s)$ and
\[
a(x, \theta)- \sum_{j=0}^{N-1} \chi^e(x) \, a_{m_e-j, \bullet}(x, \theta) \in \SG^{m_e-N, m_\psi}(\RR^d\times\RR^s), \quad N=1,2,\dots;
\]
\item[ii)] there exist $a_{\bullet, m_\psi-k}(x, \theta)\in \mathscr{H}_\theta^{m_\psi-k}(\RR^d\times\RR^s)$ such that, 
for a $0$-excision function $\chi^\psi$, $\chi^\psi(\theta)\,a_{\bullet, m_\psi-k}(x, \theta)\in \SG_{\cl(x)}^{m_e, m_\psi-k}(\RR^d)$ and
\[
a(x, \theta) - \sum_{k=0}^{N-1} \chi^\psi(\theta) \, a_{\bullet, m_\psi-k} \in \SG^{m_e, m_\psi-N}(\RR^d), \quad N=1,2,\dots
\] 
\end{itemize}
We also set, when $s=d$, 
\[
	L_{\cl}^{m_e,m_\psi}=L_{\cl(x, \xi)}^{m_e,m_\psi}(\RR^d)=
	\Op(\SG^{m_e,m_\psi}_{\cl(x, \xi)}(\RR^d\times\RRd))=\Op(\SG^{m_e,m_\psi}_{\cl}).
\]
\end{defn}

\begin{rem}
	Definitions \ref{def:sgclass-a} and \ref{def:sgclass-b} can be extended, 
	in a natural way,
	from operators acting between scalars to operators acting between 
	(distributional sections of) vector bundles. In that case, matrix-valued symbols 
	are involved, whose entries satisfy the estimates \eqref{eq:disSG} and admit
	expansions in homogeneous terms as above, see \cite{Schulze}.
\end{rem}

\noindent
The next two results are very useful when dealing with $\SG$-classical symbols, see \cite{ES}.

\begin{thm}
	\label{thm:clexp}
	Let $a_{k} \in \SG_{\cl}^{m_e-k,m_\psi-k}(\RR^d\times\RR^s)$, $k =0,1,\dots$,
	be a sequence of $SG$-classical symbols
	and $a \sim \sum_{k=0}^\infty a_{k}$
	its asymptotic sum in the general $SG$-calculus.
	Then, $a \in \SG_{\cl}^{m_e,m_\psi}(\RR^d\times\RR^s)$.
\end{thm}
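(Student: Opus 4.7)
The plan is to construct the homogeneous components of the asymptotic sum $a$ explicitly from those of the summands $a_k$ and then verify both the homogeneity estimates and the two-step classical structure of Definition~\ref{def:sgclass-b}. Throughout, I write $p_{k,m_e-j}^e(x,\theta)$ for the $x$-homogeneous component of $a_k$ of order $m_e-j$ (defined only when $j\geq k$, since $a_k$ has $x$-order $m_e-k$) and $p_{k,m_\psi-l}^\psi(x,\theta)$ for the analogous $\theta$-homogeneous component. The natural guess is
\[
a_{m_e-j,\bullet}(x,\theta):=\sum_{k=0}^{j} p^e_{k,m_e-j}(x,\theta), \qquad
a_{\bullet,m_\psi-l}(x,\theta):=\sum_{k=0}^{l} p^\psi_{k,m_\psi-l}(x,\theta),
\]
each a finite sum of homogeneous functions of the correct degree, hence in $\mathscr{H}_x^{m_e-j}$ (respectively $\mathscr{H}_\theta^{m_\psi-l}$).

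Next I would check the asymptotic expansion for these candidates. The defining property of $a\sim\sum_k a_k$ gives
\[
a-\sum_{k=0}^{N-1}a_k\in\SG^{m_e-N,m_\psi-N}\subset\SG^{m_e-N,m_\psi},
\]
while for each $k\leq N-1$, classicality of $a_k$ in the $x$-variable yields
\[
a_k-\chi^e(x)\sum_{j=k}^{N-1} p^e_{k,m_e-j}(x,\theta)\in\SG^{m_e-N,m_\psi-k}\subset\SG^{m_e-N,m_\psi}.
\]
Summing over $k=0,\dots,N-1$ and swapping the order of summation (over the triangular index set $\{(k,j)\colon 0\leq k\leq j\leq N-1\}$), the double sum collapses precisely to $\chi^e(x)\sum_{j=0}^{N-1}a_{m_e-j,\bullet}(x,\theta)$. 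Combining both estimates gives the required containment in $\SG^{m_e-N,m_\psi}$; the analogous argument on the $\theta$ side yields the $\psi$-expansion.

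To close the argument, I need to verify the extra compatibility condition built into Definition~\ref{def:sgclass-b}, namely that $\chi^e(x)\,a_{m_e-j,\bullet}\in\SG^{m_e-j,m_\psi}_{\cl(\theta)}$ and $\chi^\psi(\theta)\,a_{\bullet,m_\psi-l}\in\SG^{m_e,m_\psi-l}_{\cl(x)}$. This is where the full $\SG$-classicality of each $a_k$ (not merely $e$- or $\psi$-classicality) is used: each summand $\chi^e(x)\,p^e_{k,m_e-j}\in\SG^{m_e-j,m_\psi-k}_{\cl(\theta)}\subset\SG^{m_e-j,m_\psi}_{\cl(\theta)}$, and since the $\cl(\theta)$ class is closed under finite sums (the homogeneous $\theta$-components of higher degree are simply zero for those summands with $k\geq 1$), the combination $\chi^e(x)\,a_{m_e-j,\bullet}$ inherits this property. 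Symmetrically for the other condition.

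I expect the main obstacle to be purely combinatorial: rewriting the double sum so that after swapping the order one recognizes the claimed $a_{m_e-j,\bullet}$. Once that index-manipulation is performed cleanly, the rest reduces to invoking the already-established closure properties of $\SG$, $\SG_{\cl(x)}$, and $\SG_{\cl(\theta)}$ under finite sums and under the standard embeddings $\SG^{r_e,r_\psi}\hookrightarrow\SG^{m_e,m_\psi}$ whenever $r_e\leq m_e$, $r_\psi\leq m_\psi$. No new analytical input is required beyond the definitions recalled in Section~\ref{subs:sgcl}.
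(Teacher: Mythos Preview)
Your argument is correct. The paper itself does not supply a proof of this theorem: it is stated together with Theorem~\ref{thm:repr} under the comment ``The next two results are very useful when dealing with $\SG$-classical symbols, see \cite{ES}'', so the proof is delegated to the reference. Your direct construction---defining the candidate homogeneous components as finite sums of the components of the $a_k$, verifying the $e$- and $\psi$-expansions by a triangular reindexing, and then checking the cross-classicality clause of Definition~\ref{def:sgclass-b} via the inclusions $\SG^{m_e-j,m_\psi-k}_{\cl(\theta)}\subset\SG^{m_e-j,m_\psi}_{\cl(\theta)}$---is the standard way to carry this out and matches what one finds in \cite{ES}.
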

\begin{thm}
	\label{thm:repr}
	Let $\mathbb{B}^d= \{ y \in \RR^d: |y| \le 1 \}$  and let $\iota$ be a 
	diffeomorphism from $\RR^d$ to $\BBdo$ such that
	\[
	\iota(x) =  \frac{x}{|x|}\left(1-\frac{1}{|x|}\right) \quad \mbox{for} \quad |x| > 3,		
	\]
	whose inverse is given, for $1>|y|>2/3$,
	\[ 
	\iota^{-1}(y) = \displaystyle \frac{y}{|y|}(1-|y|)^{-1} .
	\]
	Choose also a smooth function $h\colon\BBd\to\RR_+$ such that
	$h(y)=|y|$ for $2/3<|y|\le1$ and 
	$1 - h(y) \not= 0$ for $|y|<2/3$, so that $\tilde{y}=1-h(y)$ is a boundary defining
	function on $\BBd$, i.e., it vanishes only at $\partial\BBd\simeq\SSSd$.
	
	Consider the map on $\SG^{m_e,m_\psi}_{\cl}(\RR^d\times\RR^s)$ given by 
	\begin{equation}
		\label{eq:sgclident}
		\begin{aligned}
		a(x,\theta)\mapsto b(y,\gamma) & = [1-h(y)]^{m_e} [1-h(\gamma)]^{m_\psi}
		a(\iota^{-1}(y), \iota^{-1}(\gamma))
		\\
		&=\tilde{y}^{m_e}\,\tilde{\gamma}^{m_\psi}
		\,[(\iota^{-1}\times\iota^{-1})^*a](y,\gamma)
		=\tilde{y}^{m_e}\,\tilde{\gamma}^{m_\psi}
		\,\tilde{a}(y,\eta).
		\end{aligned}
	\end{equation}
	Then, \eqref{eq:sgclident} extends to an isomorphism 
	\[
	\iota^{m_e,m_\psi}_\SG\colon \SG^{m_e,m_\psi}_{\cl}(\RR^d\times\RR^s)
	\to 
	\Sm(\BB^d \times \BB^s),
	\]
	that is, $\tilde{a}=(\iota^{-1}\times\iota^{-1})^*a
	\in\tilde{y}^{-m_e}\tilde{\gamma}^{-m_\psi}\Sm(\BB^d \times \BB^s)$.
\end{thm}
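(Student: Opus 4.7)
The plan is to reduce everything to an analysis near the boundary of $\BBd\times\BBs$, using the bi-polyhomogeneous expansion of $a \in \SG^{m_e,m_\psi}_\cl$ available from Definitions~\ref{def:sgclass-a},~\ref{def:sgclass-b} and Theorem~\ref{thm:clexp}, and to invert the construction by Taylor expansion of $b \in \Sm(\BBd\times\BBs)$ in the boundary defining functions $\tilde y$, $\tilde\gamma$. Since $\iota$ is a diffeomorphism onto $\BBdo$, the pullback $\tilde a = (\iota^{-1}\times\iota^{-1})^*a$ is automatically smooth in $\BBdo\times(\BBs)^o$, so the real issue is smoothness up to the three boundary strata $\partial\BBd\times(\BBs)^o$, $\BBdo\times\partial\BBs$, and the corner $\partial\BBd\times\partial\BBs$.

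For well-definedness I would combine the $e$- and $\psi$-expansions into the bi-asymptotic sum
\[
	a(x,\theta) = \sum_{j<N,\,k<M}\chi^e(x)\chi^\psi(\theta)\,a_{m_e-j,m_\psi-k}(x,\theta) + r_{NM}(x,\theta),
\]
with $a_{m_e-j,m_\psi-k}$ bi-homogeneous of degree $(m_e-j,m_\psi-k)$ and $r_{NM}\in \SG^{m_e-N,m_\psi-M}$. Using $\tilde y = |x|^{-1}$ and $y/|y|=x/|x|$ for $|x|>3$ (and similarly for $\theta$), the pullback of each bi-homogeneous piece, multiplied by $\tilde y^{m_e}\tilde\gamma^{m_\psi}$, equals
\[
	\tilde y^{j}\,\tilde\gamma^{k}\,a_{m_e-j,m_\psi-k}(y/|y|,\gamma/|\gamma|),
\]
which extends smoothly across $\partial(\BBd\times\BBs)$ because $y\mapsto y/|y|$ and $\gamma\mapsto\gamma/|\gamma|$ are smooth there. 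For the remainder, the chain rule together with the estimate $\partial_y\iota^{-1} = O(\tilde y^{-2})$ near $\partial\BBd$ and the SG-bound on $\partial_x^\beta\partial_\theta^\alpha r_{NM}$ gives, via the Leibniz rule,
\[
	\bigl|\partial_y^\beta\partial_\gamma^\alpha\bigl(\tilde y^{m_e}\tilde\gamma^{m_\psi}\tilde r_{NM}\bigr)\bigr| \leq C_{\beta\alpha}\,\tilde y^{N-|\beta|}\,\tilde\gamma^{M-|\alpha|},
\]
which tends to zero at $\partial(\BBd\times\BBs)$ whenever $N > |\beta|$ and $M > |\alpha|$. Since $N$, $M$ are arbitrary, $b \in \Sm(\BBd\times\BBs)$, and tracking the constants through the estimate gives continuity in the Fr\'echet topologies.

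For surjectivity and the inverse, given $b \in \Sm(\BBd\times\BBs)$ I would define
\[
	a(x,\theta) = [1-h(\iota(x))]^{-m_e}[1-h(\iota(\theta))]^{-m_\psi}\,b(\iota(x),\iota(\theta)),
\]
which is $C^\infty$ on $\RRd\times\RRs$. Taylor expanding $b$ in $\tilde y$ and $\tilde\gamma$ at the corner $\partial\BBd\times\partial\BBs$ produces coefficients $c_{jk}$ smooth on $\SSSd\times\SSS^{s-1}$ and a Taylor remainder vanishing to arbitrary order at the boundary. Undoing the substitution, the Taylor terms become the bi-homogeneous components $|x|^{m_e-j}|\theta|^{m_\psi-k}\,c_{jk}(x/|x|,\theta/|\theta|)$ required by Definition~\ref{def:sgclass-b}, while the Taylor remainder, divided by $\tilde y^{m_e}\tilde\gamma^{m_\psi}$, gives an element of $\SG^{m_e-N,m_\psi-M}$ by the reverse of the estimate above. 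Injectivity is immediate since $\iota$ is a bijection onto $\BBdo$ and $\tilde y\tilde\gamma > 0$ there, and the two constructions are mutually inverse by direct substitution.

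The main obstacle is the uniform control at the corner $\partial\BBd\times\partial\BBs$, where both boundary defining functions vanish simultaneously and neither the $e$-expansion nor the $\psi$-expansion alone produces enough decay in the remainder. This is resolved precisely by the compatibility built into Definition~\ref{def:sgclass-b} (each $e$-homogeneous term is itself $\psi$-polyhomogeneous, and conversely) which, together with Theorem~\ref{thm:clexp}, produces the bi-asymptotic expansion underlying the remainder estimate above; without this compatibility the corner estimate would fail at the first $\gamma$-derivative.
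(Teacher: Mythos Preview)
The paper does not give its own proof of this theorem: it is recalled from the literature (see the reference to \cite{ES,Schulze} at the start of Subsection~\ref{subs:sgcl}), so there is no ``paper's proof'' to compare against. Your outline is the standard one, and the surjectivity/inverse part via Taylor expansion in $(\tilde y,\tilde\gamma)$ is fine. However, your well-definedness argument contains a genuine gap.

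The displayed bi-asymptotic expansion
\[
	a = \sum_{j<N,\,k<M}\chi^e\chi^\psi\,a_{m_e-j,m_\psi-k} + r_{NM},
	\qquad r_{NM}\in\SG^{m_e-N,m_\psi-M},
\]
is \emph{false} as stated. Take $a(x,\theta)=\langle x\rangle\langle\theta\rangle\in\SG^{1,1}_\cl$ and $N=M=1$: then $a_{1,1}=|x||\theta|$ and $r_{11}(x,0)=\langle x\rangle$, which is not in $\SG^{0,0}$. The correct ``inclusion--exclusion'' expansion, obtained by first expanding in $x$ and then expanding the (still $\psi$-classical) remainder in $\theta$ using the compatibility in Definition~\ref{def:sgclass-b}, is
\[
	a=\sum_{j<N}\chi^e a_{m_e-j,\bullet}
	+\sum_{k<M}\chi^\psi a_{\bullet,m_\psi-k}
	-\sum_{j<N,\,k<M}\chi^e\chi^\psi a_{m_e-j,m_\psi-k}
	+ s_{NM},
\]
with $s_{NM}\in\SG^{m_e-N,m_\psi-M}$. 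The extra single-homogeneous blocks $\chi^e a_{m_e-j,\bullet}$ and $\chi^\psi a_{\bullet,m_\psi-k}$ are \emph{not} absorbed into a joint remainder; they are precisely what your estimate $|\partial_y^\beta\partial_\gamma^\alpha(\cdot)|\le C\,\tilde y^{N-|\beta|}\tilde\gamma^{M-|\alpha|}$ fails to cover (a term in $\SG^{m_e-N,m_\psi}$ contributes $O(\tilde y^N\tilde\gamma^{-1})$ after one $\gamma$-derivative). To finish, you must treat these blocks separately: for $\chi^e a_{m_e-j,\bullet}$, the $x$-homogeneity gives $\tilde y^{m_e}\widetilde{a_{m_e-j,\bullet}}=\tilde y^{j}\,a_{m_e-j,\bullet}(y/|y|,\iota^{-1}(\gamma))$, which is smooth in $y$ up to $\partial\BBd$, and then you invoke the \emph{one-variable} version of the isomorphism $S^{m_\psi}_\cl(\RRs)\to\Sm(\BBs)$ (applied smoothly in the parameter $y/|y|$) for the $\gamma$-smoothness; symmetrically for the $\psi$-block. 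Alternatively, and this is the cleanest route, use Definition~\ref{def:sgclw}: once the single-variable isomorphism $S^m_\cl(\RR^n)\cong\Sm(\BB^n)$ is established, the tensor-product identity $\SG^{m_e,m_\psi}_\cl=S^{m_e}_\cl\hat\otimes_\pi S^{m_\psi}_\cl$ together with $\Sm(\BBd)\hat\otimes_\pi\Sm(\BBs)=\Sm(\BBd\times\BBs)$ yields the result immediately.
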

\begin{rem}
We remark that this isomorphism may be used to equip $\SGcl$ with a Fr\'echet topology.\\
To avoid confusion when different spaces are involved, we make systematic use of the following notation: 
\begin{itemize}
\item $y$ denotes ``variable-type'' elements of $\BBd$, $\eta$ denotes ``co-variable-type'' elements of $\BBd$, $\gamma$ denotes ``co-variable-type'' elements of $\BBs$,
\item  the corresponding elements of $\RRd\sqcup\RRdz$ are denoted by $x$ and $\xi$ and elements of $\RRs\sqcup(\RRs\setminus\{0\})$ are named $\theta$.
\end{itemize}
Subsets of $\BBd$ and $\BBs$ that correspond to subsets of $\RRd\sqcup(\RRdz)$ or $\RRs\sqcup(\RRs\setminus\{0\})$  are usually denoted by the same symbol equipped with a tilde.
\end{rem}
The following equivalent definition of $\SG$-classical symbol has been given by I. Witt in  
\cite{Witt}.
\begin{defn}
	\label{def:sgclw}
	Let $S^m_\cl(\RR^d)$, $m \in \RR{}$,
	denote the space of global classical symbols in one variable. This 
	means that $a \in S^m_\cl(\RR^d)$ if $a = a(x)$ is smooth on $\RR^d$, satisfies
	estimates like \eqref{eq:disSG} in the only variable $x$ and there exist 
	functions $a_{j} \in C^\infty(\RR^d \setminus\{0\})$, 
	$j \in \NN$, homogeneous of degree $m-j$,
	such that, for some $0$-excision function
    $\chi^e$, we have
	\[
	a(x) \sim \sum_{j=0}^\infty \chi^e(x) a_{j}(x).
	\]
	Then, for $m_e, m_\psi \in \RR$, $\SG_\cl^{m_e,m_\psi}(\RR^d\times\RR^s) = 
	S^{m_e}_{\cl}(\RR^d_x) \hat{\otimes}_{\pi} S^{m_\psi}_{\cl}(\RR^s_\theta)$, where
	$\hat{\otimes}_{\pi}$ denotes the completed tensor product.
\end{defn}
It easily turns out that $\SG$-classical symbols are closed under differentiation, sums and products.
Note also that the definition of $\SG$-classical symbol implies compatibility conditions for the terms of the expansions with respect to $x$ and $\xi$. In fact, defining the maps 
$\sigma_e^{m_e-j}$ and $\sigma_\psi^{m_\psi-k}$ on $\SG_{\cl(x)}^{m_e,m_\psi}$ and 
$\SG_{\cl(\theta)}^{m_e,m_\psi}$, respectively, in terms of the asymptotic expansions in Definition \ref{def:sgclass-a} as
\begin{align*}
	\sigma_e^{m_e-j}(a)(x, \theta) &= a_{m_e-j, \bullet}(x, \theta),\quad j=0, 1, \ldots, 
	\\
	\sigma_\psi^{m_\psi-k}(a)(x, \theta) &= a_{\bullet, m_\psi-k}(x, \theta),\quad k=0, 1, \ldots,
\end{align*}
it possible to prove that, for $a\in\SG^{m_e,m_\psi}_\cl$,
\[
\begin{split}
a_{m_e-j,m_\psi-k}=\sigma_{\psi e}^{m_e-j,m_\psi-k}(a)=\sigma_\psi^{m_\psi-k}(\sigma_e^{m_e-j}(a))= \sigma_e^{m_e-j}(\sigma_\psi^{m_\psi-k}(a)), \\
j=0,1, \ldots, \; k=0,1, \ldots
\end{split}
\]
Moreover, the algebra property of $\SG$-symbols and Theorem \ref{thm:clexp} imply that the composition of two $SG$-classical operators, i.e. operators with $\SG$-classical symbols denoted by $L^{m_e,m_\psi}_\cl(\RRd)$, is still classical. 

\begin{defn}
\label{def:sgphs}
For an operator $A=\Op(a)\in L^{m_e,m_\psi}_\cl(\RR^d)$, or for a symbol 
$a\in\SG_\cl^{m_e,m_\psi}(\RR^d\times\RR^s)$, the triples 
\[
\left.
\begin{array}{cl}
\sigma(A)&=(\sigma_e(A),\sigma_\psi(A),\sigma_{\psi e}(A))
\\
\sigma(a)&=(\sigma_e(a),\sigma_\psi(a),\sigma_{\psi e}(a))
\end{array}
\right\}:=(a_{m_e,\bullet}\,,\,
a_{\bullet,m_\psi}\,,\, a_{m_e,m_\psi})=:(a^e,a^\psi,a^{\psi e}),
\]
are called the \textit{principal symbol of $A$}, or, respectively, the 
\textit{principal symbol of $a$}. $a^\psi$ is called 
the homogeneous principal interior symbol and the pair $\{ a^e,a^{\psi e} \}$
the homogeneous principal exit symbol of $a$. 
\end{defn}
The principal symbol of an element of 
$\SG_\cl^{m_e,m_\psi}$ is well defined, and so also the principal symbol of
an element of $L^{m_e,m_\psi}_\cl$, in view of the following simple result.
\begin{prop}
	\label{prop:sgphswdef}
	Let $a\in\SG_{\cl}^{m_e,m_\psi}(\RR^d\times\RR^s)$, and let  
	$b_{m_e-j, \bullet}$, $b_{\bullet, m_\psi-k}$, $j,k=0,1,\dots$, 
	be arbitrary sequences of functions satisfying the requirements of
	Definition \ref{def:sgclass-b} with arbitrary excision functions
	$\omega^e$, $\omega^\psi$. Then,
	\[
		(a_{m_e,\bullet}\,,\,a_{\bullet,m_\psi}\,,\, a_{m_e,m_\psi})=
		(b_{m_e,\bullet}\,,\,b_{\bullet,m_\psi}\,,\, b_{m_e,m_\psi})=
		(a^e,a^\psi,a^{\psi e}).
	\] 
\end{prop}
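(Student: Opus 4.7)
The plan is to show independently that each of the three components $a^e$, $a^\psi$, $a^{\psi e}$ extracted from the given data depends only on $a$, from which the whole statement follows by Definition \ref{def:sgphs}. Since $x$ and $\theta$ play symmetric roles in Definition \ref{def:sgclass-b}, it suffices to address $a^e$ and $a^{\psi e}$; the argument for $a^\psi$ is obtained from that for $a^e$ by swapping the variables and the role of the two expansions.

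For $a^e$, let $(b_{m_e-j,\bullet})_{j\ge 0}$ with excision function $\omega^e$ be a second admissible sequence. Subtracting the $N=1$ truncations of the two $e$-expansions yields
\[
\chi^e(x)\,a_{m_e,\bullet}(x,\theta) - \omega^e(x)\,b_{m_e,\bullet}(x,\theta) \in \SG^{m_e-1,m_\psi}(\RRd\times\RRs).
\]
For $|x|$ larger than some $R$, both excision functions are identically $1$, so the difference $f(x,\theta) := (a_{m_e,\bullet} - b_{m_e,\bullet})(x,\theta)$ satisfies a symbol estimate of order $(m_e-1, m_\psi)$ on this region, while being homogeneous of degree $m_e$ in $x$ on $\RRd\setminus\{0\}$. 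Fix $x$ with $|x|=1$ and $\theta\in\RRs$, and evaluate at $tx$ for $t\to+\infty$: the identity $f(tx,\theta) = t^{m_e}f(x,\theta)$ combined with the bound $|f(tx,\theta)|\le C\,\jap{tx}^{m_e-1}\jap{\theta}^{m_\psi}$ forces $|f(x,\theta)|\le C\,t^{-1}\jap{\theta}^{m_\psi}$ for all $t\ge R$, hence $f(x,\theta)=0$. By homogeneity this gives $a_{m_e,\bullet}=b_{m_e,\bullet}$ on $\RRdz\times\RRs$, so $a^e$ is intrinsically determined by $a$. The same argument in the variable $\theta$ yields the intrinsic nature of $a^\psi$.

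For $a^{\psi e}$: by Definition \ref{def:sgclass-b}(i), both $\chi^e(x)\,a_{m_e,\bullet}$ and $\omega^e(x)\,b_{m_e,\bullet}$ belong to $\SG_{\cl(\theta)}^{m_e,m_\psi}(\RRd\times\RRs)$, and we have just shown that they differ only in a neighbourhood of $x=0$. Consequently, $a^e$ is a well-defined $\psi$-classical symbol whose $\psi$-expansion is inherited from either sequence; applying the scaling argument of the previous paragraph, this time in $\theta$ and to the $N=1$ truncation of the $\psi$-expansion of $a^e$, forces the $m_\psi$-homogeneous term in $\theta$ to be unique. By the compatibility relation $a_{m_e,m_\psi}=\sigma_\psi^{m_\psi}(\sigma_e^{m_e}(a))$ recalled above the proposition, this uniqueness translates into $a_{m_e,m_\psi}=b_{m_e,m_\psi}$. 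The only real obstacle is setting up the scaling argument cleanly, since the purported homogeneity only holds away from the origin; this is handled by restricting to rays $\{tx:t\ge 1\}$ with $|x|=1$, where all excision functions equal $1$ for $t$ sufficiently large, and the decay $\jap{tx}^{m_e-1}=O(t^{m_e-1})$ beats the growth $t^{m_e}$ coming from homogeneity.
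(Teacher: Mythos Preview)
The paper does not actually supply a proof of this proposition; it is stated as a ``simple result'' and then immediately used. Your argument is the standard one and is correct: subtracting the two $N=1$ truncations places the difference of the excised leading terms in $\SG^{m_e-1,m_\psi}$, and on the region where both excision functions equal $1$ the homogeneity-versus-decay comparison along rays $t\mapsto tx$ forces the difference to vanish. The treatment of $a^{\psi e}$ by first pinning down $a^e=a_{m_e,\bullet}$ and then running the same scaling argument in $\theta$ on its (now intrinsic) $\psi$-expansion is also the right way to close the loop, and your invocation of the compatibility relation $a_{m_e,m_\psi}=\sigma_\psi^{m_\psi}(\sigma_e^{m_e}(a))$ is exactly what is needed. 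One small point of phrasing: $a^e$ itself is homogeneous in $x$ rather than an $\SG$-symbol, so strictly speaking the $\psi$-classicality and the scaling argument in $\theta$ are applied to $\chi^e a^e$ (or equivalently, pointwise in $x\neq 0$); this does not affect the conclusion since the $\theta$-leading term is insensitive to the $x$-dependent cutoff.
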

The definition of principal symbol above keeps the usual multiplicative behaviour, that is, 
for any $A\in L^{r_e,r_\psi}_\cl$, $B\in L^{s_e,s_\psi}_\cl$, $r_e,r_\psi,s_e,s_\psi\in\RR$,
$\sigma(AB)=\sigma(A)\,\sigma(B)$, with component-wise product in the right-hand side. The same trivially holds for a product of two $\SG$-classical symbols, namely, for any
$a\in\SG^{r_e,r_\psi}_\cl$, $b\in\SG^{s_e,s_\psi}_\cl$, $\sigma(a\cdot b)=\sigma(a)\,\sigma(b)$.
It is also possible to canonically associate, with any $a\in\SG^{m_e,m_\psi}_\cl$, the \textit{principal part of $a$},
\begin{equation}
	\label{eq:SGprincipal}
	a_p(x,\theta)
	=\chi^e(x) a^e(x,\theta) +
	\chi^\psi(\theta)(a^\psi(x,\theta) - \chi^e(x) a^{\psi e}(x,\theta)),
\end{equation}
for $0$-excision functions $\chi^e,\chi^\psi$. One then finds 
$a-a_p\in\SG^{m_e-1,m_\psi-1}_\cl$. The next two propositions assert that
$a_p$ in \eqref{eq:SGprincipal} is indeed completely determined by $\sigma(a)$ and vice versa.  
\begin{prop}
	\label{prop:prspairs-a}
	Let $a\in\SG^{m_e,m_\psi}_\cl(\RRd\times\RRs)$. 
	Then, $\sigma^{m_e}_e(a)=\sigma^{m_\psi}_\psi(a)=0$ implies 
	$a\in\SG^{m_e-1,m_\psi-1}_\cl(\RRd\times\RRs)$.
\end{prop}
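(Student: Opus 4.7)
The strategy I would take is to pass to the compactified picture using Theorem~\ref{thm:repr}, where the statement becomes a plain Hadamard factorization on the manifold with corners $\BBd\times\BBs$. The isomorphism $\iota^{m_e,m_\psi}_\SG$ identifies $a\in\SG^{m_e,m_\psi}_\cl$ with
\[
b(y,\gamma)=\tilde{y}^{m_e}\,\tilde{\gamma}^{m_\psi}\,\tilde{a}(y,\gamma)\in\Sm(\BBd\times\BBs),
\]
and the conclusion $a\in\SG^{m_e-1,m_\psi-1}_\cl$ is equivalent, by the same theorem applied with the shifted orders, to showing that $\tilde{y}^{-1}\tilde{\gamma}^{-1}b=\tilde{y}^{m_e-1}\tilde{\gamma}^{m_\psi-1}\tilde{a}$ extends smoothly to all of $\BBd\times\BBs$, i.e.\ that $b$ factors as $\tilde{y}\,\tilde{\gamma}\,c$ with $c\in\Sm(\BBd\times\BBs)$.

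Next I would reformulate the hypothesis as the vanishing of boundary traces. For $|y|>2/3$ one has $\iota^{-1}(y)=(y/|y|)\,\tilde{y}^{-1}$ and $|\iota^{-1}(y)|=\tilde{y}^{-1}$; combining this with the $e$-homogeneity of $a_{m_e,\bullet}$ and the expansion in Definition~\ref{def:sgclass-b}(i) a direct computation gives
\[
b(y,\gamma)\big|_{\tilde{y}=0}=\tilde{\gamma}^{m_\psi}\,a_{m_e,\bullet}\bigl(y/|y|,\iota^{-1}(\gamma)\bigr),
\]
and symmetrically for the trace on $\{\tilde{\gamma}=0\}$. Hence $\sigma^{m_e}_e(a)=a_{m_e,\bullet}=0$ forces $b$ to vanish on $\partial\BBd\times\BBs$, while $\sigma^{m_\psi}_\psi(a)=a_{\bullet,m_\psi}=0$ forces it to vanish on $\BBd\times\partial\BBs$; by smoothness and continuity these identifications, initially valid away from the corner, extend to the entire boundary hypersurfaces.

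Finally I would apply Hadamard's lemma twice (in the manifold-with-corners version collected in the Appendix). The first application, along the boundary defining function $\tilde{y}$, produces $b=\tilde{y}\,c_1$ with $c_1\in\Sm(\BBd\times\BBs)$. Since $b$ vanishes also on $\{\tilde{\gamma}=0\}$, one has $c_1=b/\tilde{y}=0$ on the open face $\{\tilde{\gamma}=0,\,\tilde{y}>0\}$, and smoothness of $c_1$ pushes this vanishing up to the corner, so $c_1|_{\{\tilde{\gamma}=0\}}=0$. A second Hadamard factorization gives $c_1=\tilde{\gamma}\,c$ with $c\in\Sm(\BBd\times\BBs)$, hence $b=\tilde{y}\,\tilde{\gamma}\,c$, which is the desired identity.

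The main obstacle is the middle step: making the trace identifications rigorous on all of $\BBd\times\BBs$, including near the corner $\partial\BBd\times\partial\BBs$, and verifying that the ``remainders'' in the polyhomogeneous expansions of $a$ genuinely vanish at the boundary after the conjugation by $\tilde{y}^{m_e}\tilde{\gamma}^{m_\psi}$. Once Theorem~\ref{thm:repr} is accepted this is essentially bookkeeping, since the smoothness of $b$ already controls the corner behaviour, and the explicit formula for $b|_{\tilde{y}=0}$ on the dense open set $\tilde{\gamma}>0$ determines it on the whole face by continuity.
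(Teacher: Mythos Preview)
The paper states this proposition without proof, as a standard fact of the classical $\SG$-calculus drawn from the references \cite{ES} and \cite{Schulze} cited at the opening of Subsection~\ref{subs:sgcl}. Your argument via the compactification isomorphism of Theorem~\ref{thm:repr} followed by a double Hadamard factorization is correct and is indeed the natural proof in this framework: the identification of the vanishing of $\sigma^{m_e}_e(a)$ and $\sigma^{m_\psi}_\psi(a)$ with the vanishing of $b=\iota^{m_e,m_\psi}_\SG(a)$ on the two boundary hypersurfaces $\{\tilde{y}=0\}$ and $\{\tilde{\gamma}=0\}$ is exactly what the paper later records as Proposition~\ref{prop:prschar}, and since you rederive that trace formula directly from Definition~\ref{def:sgclass-b} and the explicit form of $\iota^{-1}$ there is no circularity. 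The only point worth tightening is the second Hadamard step: rather than arguing by continuity that $c_1$ vanishes on the corner, it is cleaner to note that $c_1\in\Sm(\BBd\times\BBs)$ vanishes on the open dense subset $\{\tilde{\gamma}=0,\ \tilde{y}>0\}$ of the closed face $\{\tilde{\gamma}=0\}$, hence on all of it by smoothness --- which is precisely what you say, so this is only a matter of phrasing.
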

\begin{prop}
	\label{prop:prspairs-b}
	Let $(a^e,a^\psi)$ be a couple of functions satisfying the following assumptions:
	\begin{itemize}
	\item $a^e\in\mathscr{H}^{m_e}_x(\RRd\times\RRs)$ and, for a $0$-excision function $\chi^e$,
	$\chi^e(x)\,a^e(x,\theta)\in\SG^{m_e,m_\psi}_{\cl(\theta)}(\RRd\times\RRs)$;
	\item $a^\psi\in\mathscr{H}^{m_\psi}_\theta(\RRd\times\RRs)$ and, for a $0$-excision function 
	$\chi^\psi$, $\chi^\psi(\theta)\,a^\psi(x,\theta)\in\SG^{m_e,m_\psi}_{\cl(x)}(\RRd\times\RRs)$;
	\item $\sigma^{m_e}_e(\chi^\psi \, a^\psi)=\sigma^{m_\psi}_\psi(\chi^e \, a^e)=a^{\psi e}$.
	\end{itemize}
	Then, there exists $a\in\SG^{m_e,m_\psi}_\cl(\RRd\times\RRs)$
	such that $\sigma(a)=(a^e, a^\psi, a^{\psi e})$.
\end{prop}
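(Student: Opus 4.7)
The natural candidate is to take $a$ to be the principal part $a_p$ defined by the formula \eqref{eq:SGprincipal}, i.e.
\[
a_p(x,\theta)=\chi^e(x)\,a^e(x,\theta) + \chi^\psi(\theta)\,\bigl(a^\psi(x,\theta)-\chi^e(x)\,a^{\psi e}(x,\theta)\bigr).
\]
Accordingly, my plan is in two steps: first verify that $a_p\in\SG^{m_e,m_\psi}_\cl(\RRd\times\RRs)$, and then check that the triple $\sigma(a_p)$ coincides with $(a^e,a^\psi,a^{\psi e})$.

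For the first step I would treat the three summands separately. The term $\chi^e a^e$ is $\psi$-classical by the first hypothesis, and it is trivially $e$-classical because $a^e$ is already homogeneous of degree $m_e$ in $x$, so its $e$-expansion has the single term $a^e$. Symmetrically, $\chi^\psi a^\psi$ is $e$-classical by the second hypothesis and $\psi$-classical with one term. The cross term $\chi^e\chi^\psi a^{\psi e}$ is classical in both variables because $a^{\psi e}\in\mathscr{H}^{m_e}_x\cap\mathscr{H}^{m_\psi}_\theta$ is bi-homogeneous, so the expansion in either variable terminates after one step. Since $\SG^{m_e,m_\psi}_\cl$ is stable under finite sums (a consequence of Theorem \ref{thm:clexp}, or of Definition \ref{def:sgclw}), we conclude $a_p\in\SG^{m_e,m_\psi}_\cl$.

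For the second step I would exploit the additivity of the maps $\sigma^{m_e}_e$, $\sigma^{m_\psi}_\psi$ and $\sigma^{m_e,m_\psi}_{\psi e}$, together with the compatibility assumption. Computing the $e$-principal symbol,
\[
\sigma^{m_e}_e(a_p)=\sigma^{m_e}_e(\chi^e a^e)+\sigma^{m_e}_e(\chi^\psi a^\psi)-\sigma^{m_e}_e(\chi^e\chi^\psi a^{\psi e}),
\]
the first summand equals $a^e$, the second equals $a^{\psi e}$ by the third (compatibility) hypothesis, and the third also equals $a^{\psi e}$, since $\chi^\psi=1$ outside a compact set in $\theta$ and $a^{\psi e}$ is already $e$-homogeneous of the correct degree. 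The two copies of $a^{\psi e}$ cancel, leaving $\sigma^{m_e}_e(a_p)=a^e$. An analogous computation (interchanging the roles of $x$ and $\theta$ and using the second equality in the third hypothesis) yields $\sigma^{m_\psi}_\psi(a_p)=a^\psi$. Finally, by the commutativity of the iterated principal symbol recalled between Propositions \ref{prop:sgphswdef} and \ref{prop:prspairs-a},
\[
\sigma^{m_e,m_\psi}_{\psi e}(a_p)=\sigma^{m_\psi}_\psi\bigl(\sigma^{m_e}_e(a_p)\bigr)=\sigma^{m_\psi}_\psi(a^e)=a^{\psi e},
\]
using once more the third hypothesis. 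Setting $a:=a_p$ concludes the proof.

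The one delicate point I anticipate is the careful bookkeeping of the excision factors when computing $\sigma^{m_e}_e$ on products of the form $\chi^\psi(\theta)\,b(x,\theta)$: one must use that the $e$-principal symbol depends only on the asymptotic behaviour of $b$ as $|x|\to\infty$, and is insensitive to modifications on compact sets (cf.\ Proposition \ref{prop:sgphswdef}), so that the factor $\chi^\psi$ — equal to $1$ outside a compact neighbourhood of $\theta=0$ — can effectively be dropped when reading off the homogeneous leading term. Once this is granted, the cancellation of the two $a^{\psi e}$ contributions in the computation above is transparent and the argument is essentially formal.
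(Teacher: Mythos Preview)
The paper does not supply its own proof of this proposition; it is recorded in the preliminaries as a known fact from the $\SG$-calculus (cf.\ \cite{ES,Schulze}). Your construction via the principal part $a_p$ is exactly the standard argument and is correct.

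One small remark on the ``delicate point'' you flag at the end. Saying that $\chi^\psi$ ``can effectively be dropped'' is slightly misleading: the factor $\chi^\psi$ depends only on $\theta$, so it passes through $\sigma^{m_e}_e$ untouched. What actually happens is
\[
\sigma^{m_e}_e(\chi^\psi a^\psi)=\chi^\psi a^{\psi e}
\quad\text{and}\quad
\sigma^{m_e}_e(\chi^e\chi^\psi a^{\psi e})=\chi^\psi a^{\psi e},
\]
so the two contributions cancel exactly. Equivalently and more directly, $\chi^\psi(a^\psi-\chi^e a^{\psi e})\in\SG^{m_e-1,m_\psi}$ by the very definition of $a^{\psi e}$ as the $e$-leading term of $\chi^\psi a^\psi$, hence its $e$-principal symbol vanishes. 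This is only a rephrasing; your computation and the symmetric one for $\sigma^{m_\psi}_\psi$ go through as written.
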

Theorem \ref{thm:ellclass} below allows to express the ellipticity
of $\SG$-classical symbols and operators in terms of their principal symbol.
\begin{thm}
	 \label{thm:ellclass}
	An operator $A\in L^{m_e,m_\psi}_\cl(\RR^d)$ or a symbol 
	$a\in\SG^{m_e,m_\psi}_\cl(\RR^d\times\RR^s)$ is $\SG$-elliptic if and only if each
	element of the triple $\sigma(A)$, respectively $\sigma(a)$, is 
	non-vanishing on its domain of definition.
\end{thm}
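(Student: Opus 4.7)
The plan is to prove both implications, exploiting the link between the principal symbol components and the behaviour of $a$ in the three directions at infinity of $\RRd\times\RRs$.

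For the forward direction ($\SG$-elliptic $\Rightarrow$ non-vanishing principal symbol), I argue by contradiction. Suppose, for instance, $a^e(x_0,\theta_0)=0$ for some $x_0\neq 0$ and some $\theta_0\in\RRs$. Homogeneity of degree $m_e$ in $x$ gives $a^e(tx_0,\theta_0)=0$ for every $t>0$. The $e$-expansion of Definition \ref{def:sgclass-a} (i), applied to $a$ together with $\chi^e(tx_0)=1$ for $t$ large, yields $|a(tx_0,\theta_0)|\leq C\,t^{m_e-1}\jap{\theta_0}^{m_\psi}$, which contradicts the $\SG$-ellipticity lower bound $|a(tx_0,\theta_0)|\geq c\,t^{m_e}\jap{\theta_0}^{m_\psi}$ valid for $t$ large. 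The analogous argument on rays $(x_0,t\theta_0)$ using the $\psi$-expansion handles $a^\psi$. For $a^{\psi e}$, if $a^{\psi e}(x_0,\theta_0)=0$ with $x_0,\theta_0\neq 0$, I iterate: by Proposition \ref{prop:prspairs-b}, $a^{\psi e}$ is both the leading $\psi$-term of $a^e$ and the leading $e$-term of $a^\psi$, so combining the two asymptotic expansions gives, along the two-parameter ray $(t\,x_0,s\,\theta_0)$ with $t,s\to\infty$, the bound $|a(tx_0,s\theta_0)|=o(t^{m_e}s^{m_\psi})$, again contradicting ellipticity.

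For the reverse direction, I pass to the ball compactification provided by Theorem \ref{thm:repr}. The symbol $a$ lifts to $b=\iota^{m_e,m_\psi}_\SG(a)\in\Sm(\BB^d\times\BB^s)$, and the $\SG$-ellipticity of $a$ on $\{|x|+|\theta|\geq R\}$ is equivalent to $b$ being bounded away from zero on a neighbourhood of $\partial(\BB^d\times\BB^s)$. The key observation is that, after pulling out the defining factors $\tilde y^{m_e}\tilde\gamma^{m_\psi}$ in \eqref{eq:sgclident}, the restrictions of $b$ to the two hyperface components $\{\tilde y=0\}$, $\{\tilde\gamma=0\}$, and to the corner $\{\tilde y=\tilde\gamma=0\}$ are $\iota$-pullbacks of $a^e$, $a^\psi$, and $a^{\psi e}$ respectively (with the homogeneous variables realized as points on $\SSSd$, $\SSS^{s-1}$). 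The non-vanishing hypothesis on the triple $\sigma(a)$ thus translates exactly into $b\neq 0$ on $\partial(\BB^d\times\BB^s)$; compactness of the boundary and continuity of $b$ then give $|b|\geq c>0$ on a neighbourhood of $\partial(\BB^d\times\BB^s)$, which, transported back by $\iota^{-1}\times\iota^{-1}$, is precisely the required estimate $|a(x,\theta)|\geq c\,\jap{x}^{m_e}\jap{\theta}^{m_\psi}$ for $|x|+|\theta|$ large.

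The main technical obstacle is the identification of the corner restriction of $b$ with $a^{\psi e}$, since this is where both compatibility conditions of Proposition \ref{prop:prspairs-b} are required. A more hands-on alternative, which I would keep in reserve in case the corner analysis becomes delicate, is to split $\{|x|+|\theta|\geq R\}$ into the three overlapping regimes $\{|\theta|\leq M\}$, $\{|x|\leq M\}$, and $\{|x|,|\theta|\geq M'\}$, choosing $M,M'$ so that the three regions cover $\{|x|+|\theta|\geq R\}$, and using in each the dominant principal symbol component via the appropriate asymptotic expansion together with a compactness argument on $\SSSd$, $\SSS^{s-1}$, or $\SSSd\times\SSS^{s-1}$. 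The compatibility $\sigma_\psi^{m_\psi}(\chi^e a^e)=\sigma_e^{m_e}(\chi^\psi a^\psi)=a^{\psi e}$ ensures the lower bounds glue coherently across the overlaps of the three regimes.
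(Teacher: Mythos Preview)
The paper does not actually prove Theorem~\ref{thm:ellclass}: it is stated in the preliminaries (Subsection~\ref{subs:sgcl}) as a known result from the $\SG$-literature, with implicit reference to \cite{ES,Schulze}. So there is no ``paper's own proof'' to compare against.

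That said, your argument is correct and is in fact the standard one. A few minor remarks. In the forward direction for $a^{\psi e}$, you cite Proposition~\ref{prop:prspairs-b}, but what you actually need is the compatibility $\sigma_\psi^{m_\psi}(\chi^e a^e)=\sigma_e^{m_e}(\chi^\psi a^\psi)=a^{\psi e}$, which is a \emph{consequence} of Definition~\ref{def:sgclass-b} (stated just before Definition~\ref{def:sgphs}); Proposition~\ref{prop:prspairs-b} is the converse construction. Also, your ``two-parameter ray'' phrasing is slightly loose: it suffices to take $t=s\to\infty$, since then $|a(tx_0,t\theta_0)|=O(t^{m_e+m_\psi-1})$ from the decomposition $a=\chi^e\chi^\psi a^{\psi e}+p+q$ with $p\in\SG^{m_e-1,m_\psi}$, $q\in\SG^{m_e,m_\psi-1}$, contradicting the elliptic lower bound $\gtrsim t^{m_e+m_\psi}$.

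For the reverse direction, your compactification argument is exactly the mechanism behind the paper's later Proposition~\ref{prop:prschar}, whose last sentence is precisely the reformulation ``$a$ is $\SG$-elliptic iff $\iota^{m_e,m_\psi}_\SG(a)$ is nowhere vanishing on $\Bt$''. Note that the formula part of Proposition~\ref{prop:prschar} (identifying the boundary restrictions of $b$ with $a^e,a^\psi,a^{\psi e}$) is proved there independently of Theorem~\ref{thm:ellclass}, so invoking it is not circular; alternatively, the identification is a direct computation from \eqref{eq:sgclident} and the homogeneous expansions, which you could spell out. Your ``hands-on alternative'' via the three-region splitting is also perfectly viable and is essentially the same argument unwound without the compactification language.
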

In the following Definition \ref{def:wfs} we introduce some additional notation, which we will make systematical use of. 
\begin{defn}
	\label{def:wfs}
	We define the \textit{$\SG$-wave front space} as 
	$\Wt:=\partial(\BBd\times\BBd)=\Wte\sqcup \Wtp\sqcup\Wtpe$, where
	\begin{equation}
		\label{eq:defwt}
		\Wte:=\SSSd\times\left(\BBd\right)^o, \quad
		\Wtp:=\BBdo\times\SSS^{d-1}, \quad
		\Wtpe:=\SSSd\times\SSS^{d-1}.
	\end{equation}
	In a completely similar fashion, substituting $s$ in place of $d$ in the dimensions of the second
	factors in \eqref{eq:defwt}, we define $\Bt:=\partial(\BBd\times\BBs)=\Bte\sqcup \Btp\sqcup\Btpe$.
	We also set $\WSG=\WSG^e\sqcup\WSG^\psi\sqcup\WSG^{\psi e}$, with
	\begin{equation}
		\label{eq:defwsg}
		\begin{aligned}
		\WSG^e:=(\RRd\setminus\{0\})\times\RR^{d}, \;\, & \;\,
		\WSG^\psi:=\RR^d\times(\RRd\setminus\{0\}),
		\\
		\WSG^{\psi e}:=(\RRd\setminus\{0&\})\times(\RRd\setminus\{0\}),
		\end{aligned}
	\end{equation}
	and, again with $s$ in place of $d$ in the dimensions of the second factors of \eqref{eq:defwsg},
	$\B:=\B^e\sqcup\B^\psi\sqcup\B^{\psi e}$. Finally, we set $\Sc=\Sce\sqcup\Scp\sqcup\Scpe$, with
	\[
		\Sce=\SSSd\times\RRs,\quad\Scp=\RRd\times\SSS^{s-1},\quad\Scpe=\SSSd\times\SSS^{s-1}.
	\]
	and accordingly $\Sct$ as the union of
	\[
		\Scte=\SSSd\times(\BBs)^o,\quad\Sctp=(\BBd)^o\times\SSS^{s-1},\quad\Sctpe=\SSSd\times\SSS^{s-1}.
	\]	
	Moreover, with $\pi_{1,0}\in\SGcl^{1,0}(\RR^d\times\RR^s)$ and 
	$\pi_{0,1}\in\SGcl^{0,1}(\RR^d\times\RR^s)$ we denote the symbols
	\[
		\pi_{1,0}(x,\theta):=\jap{x}, \quad \pi_{0,1}(x,\theta):=\jap{\theta}.
	\]
	Finally, with any submanifold $M$ of $\SSSd$ or $\SSS^{s-1}$, we associate the conic manifold 
	$\Gamma(M)\subset(\RRdz)$ or $\RR^s\setminus\{0\}$ given by 
	\[
		\Gamma(M):=\RR_+\cdot M=\{\mu\cdot y\, \colon\, y\in M,\ \mu>0\}.
	\]
	Note that 
	\[
		\B^e=(\Gamma\times\mathrm{id})(\Sce), \quad 
		\B^\psi=(\mathrm{id}\times\Gamma)(\Scp), \quad
		\B^{\psi e}=(\Gamma\times\Gamma)(\Scpe).
	\]
	With $\Gamma$ we will also denote the map $y\mapsto\mu\cdot y$, for any vector $y\in\RRd$
	and a fixed $\mu>0$.
\end{defn}
In the sequel, we will systematically make use of the next two results. The first one shows that
derivatives with respect to variable and covariable commute with the \textit{principal
symbol map} $\sigma$ on $\SG^{m_e,m_\psi}_\cl$. The second one is a
characterization of the principal symbol of $a\in\SG^{m_e,m_\psi}_\cl$ in terms of the 
evaluation of the function
$\iota^{m_e,m_\psi}_\SG(a)\in\Sm(\BBd\times\BBs)$, defined in Theorem \ref{thm:repr},
at points in $\Bt$, then pull-back and extension by homogeneity.
By Theorem \ref{thm:ellclass}, $\SG$-ellipticity of $a$ can be then be expressed as the 
non-vanishing of $\iota^{m_e,m_\psi}_\SG(a)$ on $\Bt$.
\begin{prop}
	\label{prop:prsder}
	Let $a\in\SG^{m_e,m_\psi}_\cl(\RR^d\times\RR^s)$. Then, for any $\alpha\in\ZZ_+^d$
	and $\beta\in\ZZ_+^s$, 
	\[
		\sigma(\partial_x^\alpha\partial_\theta^\beta a(x,\theta))=
		\partial_x^\alpha\partial_\theta^\beta\sigma(a(x,\theta)).
	\]
\end{prop}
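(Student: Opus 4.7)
The plan is to verify the statement componentwise on the triple $\sigma(a)=(a^e,a^\psi,a^{\psi e})$, using the definition via asymptotic expansions from Definition \ref{def:sgclass-b} rather than the compactified picture of Theorem \ref{thm:repr}, since the diffeomorphism $\iota$ distorts the Cartesian derivatives in a non-trivial way. The key observation is that differentiation respects both the polyhomogeneous expansions and their orders, in a way which makes the identification of the leading homogeneous term straightforward.

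For the $e$-component, I would write $a(x,\theta)=\chi^e(x)\,a^e(x,\theta)+r(x,\theta)$ with $r\in\SG^{m_e-1,m_\psi}_\cl$, where $\chi^e$ is a $0$-excision function. Applying $\partial_x^\alpha\partial_\theta^\beta$ and expanding by the Leibniz rule yields
\[
\partial_x^\alpha\partial_\theta^\beta\bigl[\chi^e(x)a^e(x,\theta)\bigr]
=\chi^e(x)\,\partial_x^\alpha\partial_\theta^\beta a^e(x,\theta)
+\sum_{0<\gamma\le\alpha}\binom{\alpha}{\gamma}\bigl(\partial_x^\gamma\chi^e(x)\bigr)\bigl(\partial_x^{\alpha-\gamma}\partial_\theta^\beta a^e(x,\theta)\bigr).
\]
Every term in the sum carries a factor $\partial_x^\gamma\chi^e$ which is compactly supported in $x$, so the sum belongs to $\SG^{-\infty,m_\psi-|\beta|}$, and in particular has vanishing $e$-principal symbol. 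On the other hand, $a^e\in\mathscr{H}^{m_e}_x$ is smooth and homogeneous of degree $m_e$ in $x$ (and smooth in $\theta$), so $\partial_x^\alpha\partial_\theta^\beta a^e\in\mathscr{H}^{m_e-|\alpha|}_x$ is homogeneous of degree $m_e-|\alpha|$ in $x$ (still smooth in $\theta$). Since $\partial_x^\alpha\partial_\theta^\beta r\in\SG^{m_e-1-|\alpha|,m_\psi-|\beta|}$, we recognize $\chi^e(x)\,\partial_x^\alpha\partial_\theta^\beta a^e(x,\theta)$ as the principal $e$-term of the expansion of $\partial_x^\alpha\partial_\theta^\beta a$, and by the uniqueness statement of Proposition \ref{prop:sgphswdef} we conclude $\sigma_e(\partial_x^\alpha\partial_\theta^\beta a)=\partial_x^\alpha\partial_\theta^\beta\sigma_e(a)$.

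The argument for $\sigma_\psi$ is formally identical, interchanging the roles of $x$ and $\theta$ (with the excision function $\chi^\psi(\theta)$ now producing the lower-order remainder). For the bi-homogeneous component $\sigma_{\psi e}$, one may either argue directly by the same reasoning, starting from the bihomogeneous leading term $a^{\psi e}=a_{m_e,m_\psi}$ and using the compatibility $\chi^e(x)\chi^\psi(\theta)a^{\psi e}\in\SG^{m_e,m_\psi}_\cl$ together with the control of Leibniz remainders (now compactly supported either in $x$ or in $\theta$), or alternatively invoke the compatibility relation $\sigma_{\psi e}(a)=\sigma_\psi^{m_\psi}(\sigma_e^{m_e}(a))=\sigma_e^{m_e}(\sigma_\psi^{m_\psi}(a))$ recorded after Definition \ref{def:sgclw} and iterate the two previously established identities.

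No serious obstacle is expected; the proof is essentially bookkeeping. The only point which deserves attention is the uniform control of the Leibniz ``cross-terms'' in the $x$- and $\theta$-variables: one must check that a derivative falling on an excision function yields a compactly supported factor, which then absorbs the remaining homogeneous factor into the residual class of the appropriate component. This is routine once the independence of the principal symbol from the chosen excision functions (Proposition \ref{prop:sgphswdef}) is invoked to justify working with the concrete decomposition $a=\chi^e a^e+r$ and its $\theta$-analogue.
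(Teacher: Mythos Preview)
Your proposal is correct and follows essentially the same argument as the paper: decompose $a=\chi^e a^e+r$, apply Leibniz, discard the compactly supported cross-terms into the residual class, and invoke the uniqueness of the leading homogeneous term (Proposition~\ref{prop:sgphswdef}) to identify $\sigma_e(\partial_x^\alpha\partial_\theta^\beta a)$. The paper likewise treats only $\sigma_e$ in detail and declares the other components ``completely similar''; your remark that $\sigma_{\psi e}$ can alternatively be obtained by iterating the compatibility relation is a harmless variant.
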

\begin{proof}
	We prove the result only for $\sigma_e(a)$, since the argument for $\sigma_\psi(a)$
	and $\sigma_{\psi e}(a)$ is completely similar. By Definition \ref{def:sgclass-b}, we
	have, for any $(x,\theta)\in\RR^d\times\RR^s$,
	\[
		a(x,\theta)=\chi^e(x)\,a^e(x,\theta)+p(x,\theta),
	\]
	with a $0$-excision function $\chi^e$ and a symbol $p\in\SG^{m_e-1,m_\psi}$.
	This implies, for any $\alpha\in\ZZ_+^d$ and $\beta\in\ZZ_+^s$,
	\begin{align*}
		(\partial^\alpha_x\partial^\beta_\theta a)(x,\theta)&=
		\chi^e(x)\,(\partial^\alpha_x\partial^\beta_\theta a^e)(x,\theta)
		\\
		&+\sum_{0<\kappa\le\alpha} c_{\alpha\kappa}\,(\partial^\kappa\chi^e)(x)\,
		(\partial^{\alpha-\kappa}_x\partial^\beta_\theta a^e)(x,\theta)
		+ (\partial^\alpha_x\partial^\beta_\theta p)(x,\theta)
		\\
		&=\chi^e(x)\,(\partial^\alpha_x\partial^\beta_\theta a^e)(x,\theta)
		+ q(x,\theta),
	\end{align*}
	with $q\in\SG^{m_e-|\alpha|-1,m_\psi-|\beta|}$. In fact, all the terms in the sum for
	$0<\kappa\le\alpha$ have compact support with respect to $x$, so that they all
	belong to $\SG^{-\infty,m_\psi-|\beta|}\subset\SG^{m_e-|\alpha|-1,m_\psi-|\beta|}$.
	Now note that, in view of Definition \ref{def:sgclass-b} and
	Proposition \ref{prop:sgphswdef}, 
	$\sigma_e(\partial^\alpha_x\partial^\beta_\theta a)(x,\theta)$ is the unique function
	$b^e(x,\theta)\in \mathscr{H}_x^{m_e-|\alpha|}$, such that 
	\[
		(\partial^\alpha_x\partial^\beta_\theta a)(x,\theta)=
		\chi^e(x) \, b^e(x,\theta)+q(x,\theta), \quad 
		q\in\SG^{m_e-|\alpha|-1,m_\psi-|\beta|},
	\]
	with $\chi^e(x) \, b^e(x,\theta)\in\SG^{m_e-|\alpha|,m_\psi-|\beta|}_{\cl(\theta)}$.
	Since $(\partial^\alpha_x\partial^\beta_\theta a^e)(x,\theta)$, by the
	hypotheses and the computations above, fulfills all such requirements, 
	we have the desired assertion.
\end{proof}
\begin{prop}
	\label{prop:prschar}
	With the notation of Theorem \ref{thm:repr} and Definition \ref{def:wfs}, for any 
	$a\in\SG^{m_e,m_\psi}_\cl(\RR^d\times\RR^s)$ we have
	\begin{align*}
		a^e|_{\Sce}&=
		(\mathrm{id}\times\iota)^*
		\left[\tilde{\gamma}^{-m_\psi}\cdot \iota^{m_e,m_\psi}_\SG(a)|_{\Bte}\right]
		=
		(\mathrm{id}\times\iota)^*
		\left[\left(\tilde{y}^{m_e}\cdot \tilde{a}\right)|_{\Bte}\right],
		\\
		a^\psi|_{\Scp}&=
		(\iota\times\mathrm{id})^*
		\left[\tilde{y}^{-m_e}\cdot \iota^{m_e,m_\psi}_\SG(a)|_{\Btp}\right]
		=
		(\iota\times\mathrm{id})^*
		\left[\left(\tilde{\gamma}^{m_\psi}\cdot \tilde{a}\right)|_{\Btp}\right],
		\\
		a^{\psi e}|_{\Scpe}&=
		(\mathrm{id}\times\mathrm{id})^*
		\left[\iota^{m_e,m_\psi}_\SG(a)|_{\Btpe}\right]
		=
		(\mathrm{id}\times\mathrm{id})^*
		\left[\left(\tilde{y}^{m_e}\tilde{\gamma}^{m_\psi}\cdot \tilde{a}\right)|_{\Btpe}\right],
	\end{align*}
	where $\mathrm{id}$ is the map identifying elements $y,\,\eta$ of $\partial\BBd$ and $\gamma\in\partial\BB^{s}$
	with the corresponding elements of the unit spheres $\SSSd\hookrightarrow\RR^d$ and 
	$\SSS^{s-1}\hookrightarrow\RR^s$, denoted by $x,\xi$ and $\theta$ respectively.
	Then, $a$ is $\SG$-elliptic if and only if $i^{m_e,m_\psi}_\SG(a)$ is nowhere vanishing 
	on $\B$.
\end{prop}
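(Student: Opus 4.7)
The plan is to reduce each of the three claimed identities to the computation of a single boundary limit, using the explicit form of $\iota$ near $\partial\BBd$ and the classical expansion of $a$. The key preliminary observation is that for $|x|>3$ one has $\iota^{-1}(y)=(y/|y|)(1-|y|)^{-1}$ with $\tilde{y}=1-|y|=1/|x|$ and $x/|x|=y/|y|$ (and similarly in $\gamma,\theta$). Consequently, for any bi-homogeneous function $f$ of bidegree $(m_e,m_\psi)$ defined on a suitable conic region, the identity
\[
\tilde{y}^{m_e}\,\tilde{\gamma}^{m_\psi}\,f(\iota^{-1}(y),\iota^{-1}(\gamma))=f(y/|y|,\gamma/|\gamma|)
\]
holds in the appropriate regime. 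This will be the mechanism by which the normalising factors $\tilde{y}^{m_e}\tilde{\gamma}^{m_\psi}$ in \eqref{eq:sgclident} cancel the growth of the homogeneous components of $a$, producing a smooth boundary value.

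First I would treat the corner $\Btpe$. By Definition~\ref{def:sgclass-b}, write $a=\chi^e(x)\chi^\psi(\theta)\,a^{\psi e}+r$ with $r$ belonging to a class in which at least one of the two orders has dropped; by iterating the expansions in $x$ and $\theta$ and using Theorem~\ref{thm:clexp}, one may even take $r\in\SG^{m_e-1,m_\psi-1}_{\cl}$. Applying $\iota^{m_e,m_\psi}_\SG$ to the main term and using the computation above gives $a^{\psi e}(y/|y|,\gamma/|\gamma|)$, which on $\Btpe$ (where $|y|=|\gamma|=1$) coincides with $a^{\psi e}(y,\gamma)$ under the identification $\mathrm{id}\times\mathrm{id}$. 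For the remainder, use the elementary identity $\iota^{m_e,m_\psi}_\SG(r)=\tilde{y}\,\tilde{\gamma}\,\iota^{m_e-1,m_\psi-1}_\SG(r)$, which is a smooth function on $\BBd\times\BBs$ vanishing on $\Btpe$. This yields the third formula.

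The other two formulas follow by the same scheme, with the decomposition adapted to the face at hand. For $\Btp$ write $a=\chi^\psi(\theta)\,a^\psi+r$ with $r\in\SG^{m_e,m_\psi-1}_{\cl}$; since $\tilde{y}\neq0$ in the interior of $\BBd$, the factor $\tilde{y}^{-m_e}$ simply cancels the leading $\tilde{y}^{m_e}$ emerging from the computation, leaving the homogeneous boundary value $a^\psi(\iota^{-1}(y),\gamma)$, which pulls back via $\iota\times\mathrm{id}$ to $a^\psi$ on $\Scp$. The residual term $\tilde{y}^{-m_e}\iota^{m_e,m_\psi}_\SG(r)|_{\Btp}=\tilde{\gamma}\,\iota^{m_e,m_\psi-1}_\SG(r)|_{\Btp}$ vanishes on $\Btp$ where $\tilde{\gamma}=0$. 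The face $\Bte$ is handled symmetrically. The main technical point to be careful about is precisely this vanishing of remainders: one needs to check that each step of the asymptotic expansion contributes one additional factor of $\tilde{y}$ or $\tilde{\gamma}$, so that the residue is genuinely of lower order in the sense of boundary-defining functions.

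The final assertion on ellipticity is then immediate from Theorem~\ref{thm:ellclass}: $a$ is $\SG$-elliptic precisely when the three components of $\sigma(a)$ are nowhere zero on their domains, and by the three formulas just proved this is equivalent to $\iota^{m_e,m_\psi}_\SG(a)$ being nowhere zero on each of $\Bte$, $\Btp$, $\Btpe$, that is, on the whole of $\Bt$. The non-vanishing scalar factors $\tilde{y}^{-m_e}$ and $\tilde{\gamma}^{-m_\psi}$ play no role in this equivalence.
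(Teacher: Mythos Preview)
Your approach is essentially the same as the paper's: expand $a$ using its classical structure, apply the map $\iota^{m_e,m_\psi}_\SG$, use homogeneity to extract the boundary value of the leading term, and check that the remainder contributes a factor of a boundary-defining function and hence vanishes on the relevant face. The paper treats $\Bte$ first and then the corner, while you reverse the order, but the mechanism is identical.

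There is one genuine slip in your corner argument. You claim that, after writing $a=\chi^e\chi^\psi a^{\psi e}+r$, one may take $r\in\SG^{m_e-1,m_\psi-1}_\cl$. This is false in general: subtracting only the bi-homogeneous leading term $a^{\psi e}$ leaves a remainder in $\SG^{m_e-1,m_\psi}+\SG^{m_e,m_\psi-1}$, not in $\SG^{m_e-1,m_\psi-1}$ (consider e.g.\ $a(x,\theta)=\langle x\rangle\langle\theta\rangle+\langle x\rangle$). Consequently, your identity $\iota^{m_e,m_\psi}_\SG(r)=\tilde{y}\,\tilde{\gamma}\,\iota^{m_e-1,m_\psi-1}_\SG(r)$ is not available for the whole of $r$. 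The fix is exactly what the paper does: write $r=p+q$ with $p\in\SG^{m_e-1,m_\psi}$ and $q\in\SG^{m_e,m_\psi-1}$, and observe that $\iota^{m_e,m_\psi}_\SG(p)=\tilde{y}\,\iota^{m_e-1,m_\psi}_\SG(p)$ and $\iota^{m_e,m_\psi}_\SG(q)=\tilde{\gamma}\,\iota^{m_e,m_\psi-1}_\SG(q)$ each vanish on $\Btpe$. Your own first remark (``at least one of the two orders has dropped'') was already the correct statement; you should use it directly rather than the stronger claim. With that correction your proof is complete and matches the paper's.
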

\begin{rem}
	It is clear that, by homogeneity, the values of $a^e$, $a^\psi$, $a^{\psi e}$ on
$\Sce$, $\Scp$ and $\Scpe$
	respectively, determine the corresponding components of $\sigma(a)$ on their whole domains of definition.
\end{rem}
\begin{proof}[Proof of Proposition \ref{prop:prschar}]
	By Definition \ref{def:sgclass-b} and Theorem \ref{thm:repr}, 
	we see that, for $|y|\ge 2/3$, $\gamma\in(\BB^s)^o$, $p\in\SG^{m_e-1,m_\psi}$,
	and a $0$-excision function $\chi^e$,
	\begin{align*}
		\iota^{m_e,m_\psi}_\SG(a)(y,\gamma)&=(1-|y|)^{m_e}\,\tilde{\gamma}^{m_\psi}\cdot
							\chi^e\left(\frac{y}{|y|}(1-|y|)^{-1}\right)\cdot
							a^e\left(\frac{y}{|y|}(1-|y|)^{-1},\iota^{-1}(\gamma)\right)
					\\
						 &\quad+(1-|y|)^{m_e}\,
						 \tilde{\gamma}^{m_\psi}\cdot
						 p\left(\frac{y}{|y|}(1-|y|)^{-1},\iota^{-1}(\gamma)\right)
						 \\
						 &=\tilde{\gamma}^{m_\psi}\cdot
							\chi^e\left(\frac{y}{|y|}(1-|y|)^{-1}\right)\cdot
							a^e\left(\frac{y}{|y|},\iota^{-1}(\gamma)\right)
					\\
						&\quad+(1-|y|)^{m_e}\,\tilde{\gamma}^{m_\psi}\cdot
						 p\left(\frac{y}{|y|}(1-|y|)^{-1},\iota^{-1}(\gamma)\right).							
	\end{align*}
	This implies immediately that, for $y\in\SSSd$, $\gamma\in\left(\BB^{s}\right)^o$,
	\begin{align*}
		\iota^{m_e,m_\psi}_\SG(a)&|_{\Bte}(y,\gamma)=
		\tilde{\gamma}^{m_\psi}\cdot a^e(y,\iota^{-1}(\gamma))
		\\
		&\Leftrightarrow
		a^e(y,\iota^{-1}(\gamma))=\left[\gamma^{-m_\psi}\cdot
		\iota^{m_e,m_\psi}_\SG(a)|_{\Bte}\right](y,\gamma)
		=\left(\tilde{y}^{m_e}\cdot\tilde{a}\right)|_{\Bte}(y,\gamma),
	\end{align*}
	which is equivalent to the first formula in the statement. The result for $a^\psi$ follows
	in the same way, exchanging the role of variable and covariable. 
	To prove the formula for $a^{\psi e}$, it is enough to notice that it also holds
	\begin{align*}
		a(x,\theta)&=\chi^e(x)\,a^e(x,\theta)+p(x,\theta)=
				\chi^e(x)\,[\chi^\psi(\theta)\,a^{\psi e}(x,\theta)+\tilde{q}(x,\theta)]+p(x,\theta)
				\\
				&=\chi^{e}(x)\,\chi^\psi(\theta)\, a^{\psi e}(x,\theta)+p(x,\theta)+q(x,\theta),
	\end{align*}
	with $0$-excision functions $\chi^e$, $\chi^\psi$, and symbols $p\in\SG^{m_e-1,m_\psi}$,
	$q\in\SG^{m_e,m_\psi-1}$. The desired result follows by restricting the related
	expression of $\iota^{m_e,m_\psi}_\SG(a)$ to $\B^{\psi e}$. Finally, the statement about 
	$\SG$-ellipticity of classical symbols is an immediate consequence of the formulae proved
	above, of Theorem \ref{thm:ellclass} and of the definition and properties of 
	$\tilde{y}$ and $\tilde{\gamma}$ from Theorem \ref{thm:repr}.
\end{proof}
We conclude the subsection by recalling the notion of \textit{local ellipticity} at points in $\Bt$
for $\SG$-classical symbols.
\begin{defn}
	\label{def:localell}
	A symbol $a\in\SG^{m_e,m_\psi}_\cl(\RRd\times\RRs)$ is 
	\textit{elliptic at $(y_0,\gamma_0)\in\Bt$} if $\iota^{m_e,m_\psi}_\SG(a)(y_0,\gamma_0)\not=0$.
\end{defn}
\begin{rem}
	\label{rem:localell}
	By Definition \ref{def:sgclass-b}, Theorem \ref{thm:ellclass} and Proposition \ref{prop:prschar},
	it follows that $a\in\SG^{m_e,m_\psi}_\cl(\RRd\times\RRs)$ is 
	elliptic at $(y_0,\gamma_0)\in\Bt$ if and only if we have
	$$|a(x,\theta)|\gtrsim\jap{x}^{m_e}\jap{\theta}^{m_\psi}\qquad\forall (x,\theta)\in V^\bullet$$
	where we define $V^\bullet$ depending on the component $\Bt^\bullet\subset\Bt$ for which we have $(y_0,\gamma_0)\in\Bt^\bullet$:\\
	For $R>0$ sufficiently large, we may set
	\begin{itemize}
		\item if $(y_0,\gamma_0)\in\Bte$, $V^e:=(\Gamma(V)\times K)\cap\{(x,\theta)\colon |x|\ge R>0\}$, for a 	suitable neighbourhood $V$ of $y_0$ in $\SSSd$ and a suitable bounded neighbourhood
		$K$ of $\iota^{-1}(\gamma_0)$ in $\RR^s$;
		\item if $(y_0,\gamma_0)\in\Btp$,
		$V^\psi:=( K\times\Gamma(V))\cap\{(x,\theta)\colon |\theta|\ge R>0\}$, for a suitable
		bounded neighbourhood $K$ of $\iota^{-1}(y_0)$ in $\RR^d$
		 and a suitable neighbourhood $V$ of $\gamma_0$ in $\SSS^{s-1}$;
		\item if $(y_0,\gamma_0)\in\Btpe$, $V^{\psi e}:=(\Gamma(V^1)\times \Gamma(V^2))\cap\{(x,\theta)\colon |x|,|\theta|\ge R>0\}$, 
		for a suitable neighbourhoods $V^1$ of $y_0$ in $\SSSd$ and 
		$V^2$ of $\gamma_0$ in $\SSS^{s-1}$.
	\end{itemize}
	More precisely, for a suitable symbol $\zeta\in\SG^{0,0}(\RR^d\times\RR^s)$, supported
	in a subset $V^\bullet$, of the type above, identically equal to
	$1$ in a smaller subset $U^\bullet\subset V^\bullet$ of the same type, 
	$\bullet\in\{e, \psi, \psi e\}$, it turns out that
	\textit{$a$ is $\SG$-elliptic with respect to $\zeta$}, cfr. \cite{Cordes,CJT2,CM}.
\end{rem}

\subsection{Tempered oscillatory integrals}
\label{sec:toi}
In this subsection we give a brief summary of the results we obtained in \cite{CoSc}. In that paper we have associated to a given (inhomogeneous) $\SG$-phase function $\varphi$ a family of tempered distributions, denoted by $I_\varphi(a)$, parametrized by amplitudes that are $\SG$-symbols and established a bound on their singularities. We begin by recalling the definition of such phase functions.
\begin{defn}
\label{def:phase}
Let $(n_e,n_\psi)\in\RR_+^2$. An element of  $\SG^{n_e,n_\psi}(\RRd\times\RRs)$ is called an (admissible) {\it $\SG$-phase function of order $(n_e,n_\psi)$} if it is real-valued and the associated function
\begin{equation}
	\label{eq:defeta}
	\begin{aligned}
	\pfe(x,\theta)&:=\langle x\rangle^2\,|\nabla_x \varphi(x,\theta)|^2+
	\langle \theta\rangle^2\,|\nabla_\theta \varphi(x,\theta)|^2
	\\
	&\phantom{:}=(|\pi_{1,0}\cdot\nabla_x \varphi|^2+|\pi_{0,1}\cdot\nabla_\theta \varphi|^2)(x,\theta)
	\end{aligned}
\end{equation}
is elliptic as an element of $\SG^{2n_e,2n_\psi}(\RRd\times\RRs)$, i.e. it satisfies, for some $R>0$,
\begin{equation}
	\label{eq:phaseineq}
	\pfe(x,\theta) \gtrsim \langle x\rangle^{2n_e} \langle \theta\rangle^{2n_\psi} 
	\text{ when } |x|+|\theta|\ge R.
\end{equation}
\end{defn}
\begin{rem}
Notice that we have not made any assumption on homogeneity and consequently these $\SG$-phase functions are in general inhomogeneous, as opposed to those that arise in the usual theory. Indeed, our approach is based on \cite{Zahn}, where a local theory of oscillatory integrals with inhomogeneous phase functions was developed.
\end{rem}
Using the notion of admissible $\SG$-phase function, we can now recall the definition of tempered oscillatory integrals given in \cite{CoSc}.
\begin{thm}
\label{thm:oscidef}
With any fixed admissible $\SG$-phase function $\varphi$ of order $(n_e,n_\psi)$ we may associate a map 
\[
	I_\varphi:\SG(\RR^d\times\RR^s)\rightarrow \Swd(\RR^d),
\] 
uniquely determined by the the following properties:
\begin{enumerate}
	\item $a\mapsto I_\varphi(a)$ is a linear map,
	\item If $a\in\Sw(\RR^d\times\RR^s)$, then $I_\varphi(a)$ coincides with the (absolutely convergent) integral
\begin{equation}
\label{eq:osciformdef}
I_\varphi(a)=\int_\RRs\, e^{i\varphi(x,\theta)}\,a(x,\theta)\,d\theta,
\end{equation}
\item the restriction of $I_\varphi$ to $\SGmm(\RR^d\times\RR^s)$ is a continuous map 
$$
	\SGmm(\RR^d\times\RR^s)\rightarrow\ \Swd(\RR^d).
$$
\end{enumerate}
We call the resulting distribution $I_\varphi(a)$ a $\SG$-{\it oscillatory integral}.
\end{thm}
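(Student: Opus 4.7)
The plan is to realize $I_\varphi(a)$ via the classical regularisation trick: construct a first-order linear differential operator $L$ on $\RRd\times\RRs$ whose formal transpose ${}^tL$ leaves $e^{i\varphi}$ invariant while strictly lowering the $\SG$-bi-order of any amplitude it is applied to. Iterating ${}^tL$ enough times then reduces \eqref{eq:osciformdef} to an absolutely convergent double integral, which supplies both the required extension and the tempered pairing.

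The first step is the construction of $L$. Fix $R$ as in \eqref{eq:phaseineq}, take a cutoff $\chi\in\Sm(\RRd\times\RRs)$ with $\chi\equiv 1$ on $\{|x|+|\theta|\ge 2R\}$ and $\chi\equiv 0$ on $\{|x|+|\theta|\le R\}$, and set
\[
L := \frac{\chi}{i\pfe}\Bigl(\jap{x}^{2}\sum_{j=1}^{d}(\partial_{x_j}\varphi)\,\partial_{x_j}+\jap{\theta}^{2}\sum_{k=1}^{s}(\partial_{\theta_k}\varphi)\,\partial_{\theta_k}\Bigr)+(1-\chi).
\]
A direct calculation gives $L\,e^{i\varphi}=e^{i\varphi}$. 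Ellipticity of $\pfe\in\SG^{2n_e,2n_\psi}$ yields $\chi\cdot\pfe^{-1}\in\SG^{-2n_e,-2n_\psi}$, so the coefficients of $\partial_{x_j}$ lie in $\SG^{1-n_e,-n_\psi}$ and those of $\partial_{\theta_k}$ in $\SG^{-n_e,1-n_\psi}$. Expanding ${}^tL$ by the Leibniz rule and using $n_e,n_\psi>0$, one checks that
\[
{}^tL\colon \SGmm(\RRd\times\RRs)\longrightarrow \SG^{m_e-n_e,m_\psi-n_\psi}(\RRd\times\RRs)
\]
continuously in the corresponding Fr\'echet topologies.

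Given $a\in\SGmm$, choose $N\in\NNz$ with $m_e-Nn_e<-d$ and $m_\psi-Nn_\psi<-s$, and for $u\in\SwRRd$ define
\[
\langle I_\varphi(a),u\rangle := \iint_{\RRd\times\RRs} e^{i\varphi(x,\theta)}\,\bigl[({}^tL)^{N}(a\otimes u)\bigr](x,\theta)\,d\theta\,dx,
\]
where $(a\otimes u)(x,\theta):=a(x,\theta)u(x)$. By the symbol estimates on $({}^tL)^{N}(a\otimes u)$ and the Schwartz decay of $u$ and its derivatives, the integrand is absolutely integrable and the pairing satisfies $|\langle I_\varphi(a),u\rangle|\lesssim p(a)\cdot q(u)$ for a Fr\'echet seminorm $p$ on $\SGmm$ and a Schwartz seminorm $q$, which yields (3). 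For $a\in\Sw(\RRd\times\RRs)$ all $N$ intermediate integrations by parts are directly justified and the formula reduces to \eqref{eq:osciformdef}, giving (2); linearity (1) is automatic, and independence of $\langle I_\varphi(a),u\rangle$ from the choices of $\chi$ and $N$ follows from one further integration by parts within an absolutely convergent integral.

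The remaining subtle point is uniqueness, i.e.\ that (1)--(3) alone already pin down $I_\varphi$. I would argue by approximation: for $a\in\SGmm$, the truncations $a_\varepsilon(x,\theta):=\chi_0(\varepsilon x)\,\chi_0(\varepsilon\theta)\,a(x,\theta)$, with $\chi_0\in\Smc(\RRd)$ and $\chi_0(0)=1$, belong to $\Sw(\RRd\times\RRs)$ and converge to $a$ in the weaker Fr\'echet topology of $\SG^{m_e+\delta,m_\psi+\delta}(\RRd\times\RRs)$ for every $\delta>0$. Property (2) fixes $I_\varphi(a_\varepsilon)$ as the classical integral, and continuity (3) in that weaker topology then forces $I_\varphi(a)=\lim_{\varepsilon\to 0} I_\varphi(a_\varepsilon)$ in $\Swd(\RRd)$, matching the construction above. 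The main analytical obstacle is establishing sufficient uniformity of the continuity estimates for ${}^tL$ across these shifted Fr\'echet topologies, which is precisely where the strict positivity of both $n_e$ and $n_\psi$ is indispensable.
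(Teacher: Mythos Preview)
The paper does not actually prove this theorem: it is merely recalled from \cite{CoSc} as part of the preliminary material, so there is no in-paper argument to compare against. Your regularisation scheme via a first-order operator $L$ with $Le^{i\varphi}=e^{i\varphi}$ and ${}^tL$ lowering both $\SG$-orders by $(n_e,n_\psi)$ is precisely the standard construction for such results and is what one expects the proof in \cite{CoSc} to be; the symbol-class bookkeeping you give for the coefficients and for ${}^tL$ is correct, and the density argument for uniqueness via $a_\varepsilon\to a$ in $\SG^{m_e+\delta,m_\psi+\delta}$ is the usual one. One small refinement: when you pair with $u\in\SwRRd$ you may as well note that $a\otimes u\in\SG^{-\infty,m_\psi}$ already, so only the condition $m_\psi-Nn_\psi<-s$ is genuinely needed for absolute convergence, and the temperedness estimate in $u$ then follows directly from counting how many derivatives of $u$ appear in $({}^tL)^N(a\otimes u)$.
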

For the above families of tempered oscillatory integrals we proved an inclusion for their so-called
\textit{$\SG$-wave front set}, which generalizes the corresponding one, valid in the standard setting, for 
H\"ormander's wave front set $\WFcl(u)$, namely\footnote{In the present paper we follow a notation close to the one used in \cite{Hormander3}, different from the one we adopted in \cite{CoSc}. 
In particular, in the original statement of Theorem \ref{thm:osciwf} proved there, $\Cpt$ was denoted by 
$\mathrm{M}_\varphi$, and $\Lpt$ by $\mathrm{SP}_\varphi$, respectively.} 
\[
	\pr_1(\WFcl(I_\varphi(a))=\mathrm{singsupp}(I_\varphi(a))\subset\pr_1(\Cp)
	\text{ and }
	\WFcl(I_\varphi(a))\subset\Lp,
\] 
see \cite{Hormander6}. In order to state our result in the $\SG$ setting, we first recall the definition of the $\SG$-wave front set. As before we make a strict distinction between the subsets of $\Wt$ and $\WSG$. Here we introduce the wave front set as a subset of $\Wt$, thus denoted $\WFSGt$:
\begin{defn}
Let $u\in\SwdRRd$. Then $\WFSGt(u)\subset\Wt$ is defined in terms of its complement as follows:
$$(y_0,\eta_0)\notin\WFSGt(u)\Leftrightarrow\,\exists\,A\in L^{0,0}_\cl\text{ elliptic at }(y_0,\eta_0)\text{ s.t. }Au\in\SwRRd.$$
\end{defn}
For more exposition and properties of this notion of wave front set, we refer to \cite{Cordes,CM,CoSc,Melrose1,Melrose2}. We now give the definition of the substitutes for the sets $\Cp$ and $\Lp$.
\begin{defn}
\label{def:Cpdef}
Let $\varphi\in\SG^{\nn}(\RRd\times\RRs)$ be an admissible $\SG$-phase function. Then 
$|\pi_{0,1}\cdot\nabla_\theta\varphi|^2\in\SG^{2n_e,2n_\psi}(\RRd\times\RRs)$ and 
$\Cpt$ denotes the set
\[
	\Cpt:=\left\{(y_0,\gamma_0)\in\Bt\,\colon\, |\pi_{0,1}\cdot\nabla_\theta\varphi|^2
	\text{ is not elliptic at }(y_0,\gamma_0)\right\}.
\]
Denote by $\pr_{\Cpt}$ the projection of $\Cpt\times\BBd\subset \BBd\times\BBs\times\BBd$ onto $\BBd\times\BBd$. We define the \textit{set of stationary phase points} of $\varphi$, $\Lpt\subset\Wt$, given in terms of its complement in $\Wt$, by
\begin{equation}
	\label{eq:spphi}
	\begin{aligned}
		(\Lpt)^c:=\{(y_0,\eta_0)\in\Wt\,\colon\, &\exists\, U\,\text{open neighbourhood of
		$(y_0,\eta_0)$ in $\BBd\times\BBd$}
	\\
		&\exists\, V\,\text{open neighbourhood of }\pr_{\Cpt}^{-1}(U)\text{ such that }
	\\
		& |\nabla_x\varphi(x,\theta)-\xi|\gtrsim \langle x\rangle^{n_e-1}\langle \theta\rangle^{n_\psi}+|\xi|
	\\
		& \text{for any }(x,\theta,\xi)\in (\iota^{-1}\times\iota^{-1}\times\iota^{-1})(V^o)\}.
	\end{aligned}
\end{equation}
Therein, $V^o$ denotes $V\cup (\BBdo\times(\BBs)^o\times\BBdo)$.
For convenience, we set
\[
	\Lpt^e=\Lpt\cap\Wt^e, \quad \Lpt^\psi=\Lpt\cap\Wt^\psi, \quad \Lpt^{\psi e}=\Lpt\cap\Wt^{\psi}.
\]
\end{defn}
Then, we have the following bounds for the singularities of the temperate oscillatory integral $I_\varphi(a)$ defined in Theorem \ref{thm:oscidef}.
\begin{thm}
\label{thm:osciwf}
Let $\varphi$ be an admissible $\SG$-phase function. Then, for any amplitude 
$a\in\SG^{m_e,m_\psi}(\RRd\times\RRs)$ we have the inclusions
\[
\pr_1(\WFSGt(I_\varphi(a)))\subset \pr_1(\Cpt)\text{ and }\WFSGt(I_\varphi(a))\subset \Lpt\,.
\]
\end{thm}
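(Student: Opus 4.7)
The plan is to derive the wave front estimate by an integration-by-parts argument in the spirit of the classical case, carried out simultaneously in the position variable $x$ and in the parameter $\theta$. I would first prove the stronger inclusion $\WFSGt(I_\varphi(a)) \subset \Lpt$; the inclusion $\pr_1(\WFSGt(I_\varphi(a))) \subset \pr_1(\Cpt)$ then follows because any $(y_0, \eta_0) \in \Wt$ with $y_0 \notin \pr_1(\Cpt)$ admits a neighbourhood $U$ for which $\pr_{\Cpt}^{-1}(U) = \emptyset$, which makes the bound in \eqref{eq:spphi} vacuously satisfied and places $(y_0, \eta_0)$ outside $\Lpt$.

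For the main inclusion, fix $(y_0, \eta_0) \in \Wt \setminus \Lpt$ together with the neighbourhoods $U, V$ supplied by \eqref{eq:spphi}. I would choose an $\SG^{0,0}_\cl$-symbol $\zeta$ elliptic at $(y_0,\eta_0)$ with support over $U$; it then suffices to show that $\Op(\zeta)\, I_\varphi(a) \in \Sw(\RRd)$. Writing
\[
\Op(\zeta)\, I_\varphi(a)(x) = (2\pi)^{-d} \int\!\!\int\!\!\int e^{i[(x - x')\cdot\xi + \varphi(x', \theta)]}\, \zeta(x, \xi)\, a(x', \theta)\, d\xi\, dx'\, d\theta
\]
as an iterated oscillatory integral, I split the integrand in $(x', \theta, \xi)$ by a smooth partition of unity $\{\chi_V, 1 - \chi_V\}$ with $\SG$-type estimates, subordinate to $V$ and to the complement of a slightly smaller neighbourhood of $\pr_{\Cpt}^{-1}(U)$. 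On the piece supported in $V$, the quantitative bound $|\nabla_{x'}\varphi - \xi| \gtrsim \jap{x'}^{n_e-1}\jap{\theta}^{n_\psi} + |\xi|$ makes the operator $L_1 = \frac{\nabla_{x'}\varphi - \xi}{i|\nabla_{x'}\varphi - \xi|^2}\cdot\nabla_{x'}$ act on the phase as the identity, and iterating $\transp L_1$ reduces the joint $\SG$-order of the amplitude by one both in $x'$ (weighted against $\theta$) and in $\xi$ at each application. On the complementary piece $(x', \theta)$ stays in a region where $|\pi_{0,1}\cdot\nabla_\theta\varphi|$ is elliptic by admissibility of $\varphi$, and iterating the transpose of $L_2 = \frac{\pi_{0,1}^2\,\nabla_\theta\varphi}{i|\pi_{0,1}\cdot\nabla_\theta\varphi|^2}\cdot\nabla_\theta$ gains one unit of joint order per step. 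After sufficiently many iterations the amplitude lies in an arbitrarily low $\SG$-class in $(x', \theta, \xi)$, so the iterated integral converges absolutely, and the outer cutoff $\zeta$ transfers Schwartz decay to the $x$-variable.

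The main technical obstacle is the uniform symbol-class bookkeeping of the amplitudes produced by iterating $\transp L_1$ and $\transp L_2$: they must lie in an appropriate joint $\SG$-class in the three variables $(x', \theta, \xi)$, with bounds that decay simultaneously in $\jap{x'}$, $\jap{\theta}$, and $\jap{\xi}$, so that Fubini applies and the continuity statement of Theorem \ref{thm:oscidef} can be invoked on each remainder. The strong form of the bound in \eqref{eq:spphi}, which includes $|\xi|$ alongside $\jap{x'}^{n_e-1}\jap{\theta}^{n_\psi}$, is precisely what provides the extra $\jap{\xi}^{-1}$-gain at each application of $\transp L_1$ and therefore ensures integrability in the dual variable $\xi$; without it the argument would degenerate at the interface between the two pieces of the partition of unity.
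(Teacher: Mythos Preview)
The paper does not contain a proof of Theorem~\ref{thm:osciwf}: the theorem appears in Subsection~\ref{sec:toi}, which is explicitly a summary of results established in the authors' earlier paper \cite{CoSc}, and no argument is given here. So there is no in-paper proof against which to compare your proposal.

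That said, your outline is the expected one and matches the strategy of \cite{CoSc}: localize near a point of $(\Lpt)^c$, split by a cut-off adapted to $\Cpt$, and integrate by parts using an operator built from $\nabla_{x'}\varphi-\xi$ on the piece where the estimate \eqref{eq:spphi} holds, and an operator built from $\nabla_\theta\varphi$ on the complementary piece where $|\pi_{0,1}\cdot\nabla_\theta\varphi|^2$ is elliptic. Your observation that the first inclusion follows from the second, via $\pr_1(\Lpt)\subset\pr_1(\Cpt)$, is correct. The one point you flag yourself---the joint $\SG$-symbol bookkeeping in $(x',\theta,\xi)$ under iteration of ${}^tL_1$ and ${}^tL_2$---is indeed the heart of the matter, and in \cite{CoSc} it is handled by working with amplitudes in a suitable tri-variable $\SG$-class and checking that each transpose lowers the relevant orders; your sketch does not carry this out, but correctly identifies what has to be done.
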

%


\medskip

\section{Submanifolds associated with $\SG$-classical phase functions}
\label{sec:subm}
\subsection{Submanifolds of $\BBd\times\BBs$}
\label{subs:subm}
In the Section \ref{sec:toi}, we have recalled the definition of oscillatory integrals for a very general class of phase functions and amplitudes. In the classical theory of H\"ormander, distributions locally defined by oscillatory integrals can be invariantly characterized as local representations of \textit{Lagrangian distributions} associated with the geometric object $\Lp$, which turns out to be a \textit{Lagrangian submanifold} of the cotangent bundle.\\
In the sequel, we will restrict our attention to $\SG$-classical phase functions of order $(1,1)$, namely, $\varphi\in\SG^{1,1}_{\cl}(\RRd\times\RRs)$, see \cite{CP1,CP2}, where it is possible to establish a similar geometric setup. In fact, under this stronger assumption, we will calculate the objects $\Cp$ and $\Lp$ in terms of the principal symbol of $\varphi$ and discuss their geometric properties. The approach in that will be to use Proposition \ref{prop:prschar} to associate with $\Cp$ and $\Lp$ conic manifolds, in the same way as one associates with a classical symbol its homogeneous principal symbols.
Recalling the existence of a canonical principal part for classical $\SG$-symbols, defined in \eqref{eq:SGprincipal}, we can write 
\[
\varphi(x,\theta)=\chi^e(x)\,\varphi^e(x,\theta)+\chi^\psi(\theta)\,\varphi^\psi(x,\theta)
-\chi^e(x)\,\chi^\psi(\theta)\,\varphi^{\psi e}(x,\theta)+r_\varphi(x,\theta)
\]
with 
\begin{itemize}
\item $\varphi^e\in\mathscr{H}_x^{1}(\RR^d\times\RR^s)$ being $\psi$-polyhomogeneous,
\item $\varphi^\psi\in\mathscr{H}_\theta^{1}(\RR^d\times\RR^s)$ being $e$-polyhomogeneous,
\item $\varphi^{\psi e}=\sigma_\psi^1(\chi^e\,\varphi^e)=\sigma_e^1(\chi^\psi\,\varphi^\psi)$,
\item $r_\varphi\in\SG^{0,0}_\cl(\RRd\times\RRs)$.
\end{itemize}
Since $e^{ir_\varphi}\in\SGzz$, we may absorb the $r_\varphi$ part of the phase function in an oscillatory integral into the amplitude. We are thus reduced to the case of studying phase functions of the form
\[
\varphi(x,\theta)=\chi^e(x)\,\varphi^e(x,\theta)+\chi^\psi(\theta)\,\varphi^\psi(x,\theta)-
\chi^e(x)\,\chi^\psi(\theta)\,\varphi^{\psi e}(x,\theta).
\]
Using Proposition \ref{prop:prschar}, we may obtain a representation of $\varphi(x,\theta)$ as a function on $(\BBd\times\BB^s)^o$ since $\tilde{\varphi}:=(\iota^{-1}\times\iota^{-1})^*\varphi\in\tilde{y}^{-1}\tilde{\gamma}^{-1}\Sm(\BBd\times\BB^s).$
\begin{rem}
\label{rem:ESiso}
This procedure, i.e. the use of Proposition \ref{prop:prschar}, allows us to work simply with smooth functions on the product of two balls instead of symbols. However, one has to be
careful when differentials are involved, since we have
\begin{align*}
\tilde{\gamma}\widetilde{\nabla_x\varphi}:=\left(\iota^{0,1}_\SG(\nabla_x \varphi)\right)(y,\gamma)&=
\tilde{\gamma}\,\nabla_y\tilde{\varphi}(y,\gamma)\cdot \Big(\frac{d\iota^{-1}(y)}{dy}\Big)^{-1}.\\
\tilde{y}\widetilde{\nabla_\theta\varphi}:=\left(\iota^{1,0}_\SG(\nabla_\theta \varphi)\right)(y,\gamma)&=
\tilde{y}\,\nabla_\gamma\tilde{\varphi}(y,\gamma)\cdot \Big(\frac{d\iota^{-1}(\gamma)}{d\gamma}\Big)^{-1}.
\end{align*}
Therein, the gradients are seen as vectors whose entries are $\SG$-symbols. We separate strictly between variables on $\RRd$ or $\RRs$ (denoted $x$, $\xi$, $\theta$) and such on $\BBd$ and $\BBs$ (denoted $y$, $\eta$, $\gamma$).
\end{rem}
\begin{lem}
\label{lem:Cptchar}
The condition that the associated function 
$$\Phi=|\pi_{1,0}\cdot\nabla_x \varphi|^2+|\pi_{0,1}\cdot\nabla_\theta \varphi|^2$$
is $\SG$-elliptic of order $(2,2)$ is equivalent to the condition that 
$\left(\tilde{\gamma}\widetilde{\nabla_x\varphi},\tilde{y}\widetilde{\nabla_\theta\varphi}\right)$ is nowhere vanishing on $\Bt$. Furthermore, we can write 
\begin{equation*}
\label{eq:Cptchar}
\Cpt=\{(y_0,\gamma_0)\in\Bt\,\colon\, \tilde{y}\widetilde{\nabla_\theta\varphi}(y_0,\gamma_0)=0\}.
\end{equation*}
\end{lem}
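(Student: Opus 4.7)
The plan is to recognize both $\Phi$ and $|\pi_{0,1}\cdot\nabla_\theta\varphi|^2$ as elements of $\SG^{2,2}_\cl(\RRd\times\RRs)$, apply the isomorphism $\iota^{2,2}_\SG$ of Theorem \ref{thm:repr}, and then read off ellipticity from Theorem \ref{thm:ellclass}, Proposition \ref{prop:prschar} and Definition \ref{def:localell}. Since $\varphi\in\SG^{1,1}_\cl$, Proposition \ref{prop:prsder} (classical symbols are closed under differentiation) gives $\nabla_x\varphi\in\SG^{0,1}_\cl$ and $\nabla_\theta\varphi\in\SG^{1,0}_\cl$ component-wise, so that $\pi_{1,0}\cdot\nabla_x\varphi$ and $\pi_{0,1}\cdot\nabla_\theta\varphi$ both lie in $\SG^{1,1}_\cl$, and both $\Phi$ and $|\pi_{0,1}\cdot\nabla_\theta\varphi|^2$ are in $\SG^{2,2}_\cl$.

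The key computation is to exploit the fact that, by its very definition in \eqref{eq:sgclident}, the map $a\mapsto\iota^{m_e,m_\psi}_\SG(a)=\tilde{y}^{m_e}\tilde{\gamma}^{m_\psi}(\iota^{-1}\times\iota^{-1})^*a$ is multiplicative in the gradings: $\iota^{m_e+r_e,m_\psi+r_\psi}_\SG(ab)=\iota^{m_e,m_\psi}_\SG(a)\cdot\iota^{r_e,r_\psi}_\SG(b)$. Setting
\[
A(y):=\iota^{1,0}_\SG(\pi_{1,0})(y)=\tilde{y}\jap{\iota^{-1}(y)},\qquad
B(\gamma):=\iota^{0,1}_\SG(\pi_{0,1})(\gamma)=\tilde{\gamma}\jap{\iota^{-1}(\gamma)},
\]
a quick inspection using the explicit formula $\iota^{-1}(y)=(y/|y|)(1-|y|)^{-1}$ for $|y|>2/3$ shows that $A=\sqrt{\tilde{y}^2+1}$ there, extending smoothly and positively to $\SSSd$; for $|y|\le 2/3$ both factors are manifestly positive, so $A\in\Sm(\BBd)$ is strictly positive everywhere, and analogously for $B\in\Sm(\BBs)$. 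Applying the multiplicativity componentwise in the gradient and summing the squares yields
\[
\iota^{2,2}_\SG(\Phi)=A^2\,|\tilde{\gamma}\widetilde{\nabla_x\varphi}|^2
+B^2\,|\tilde{y}\widetilde{\nabla_\theta\varphi}|^2,\qquad
\iota^{2,2}_\SG(|\pi_{0,1}\cdot\nabla_\theta\varphi|^2)=B^2\,|\tilde{y}\widetilde{\nabla_\theta\varphi}|^2,
\]
where $\tilde{\gamma}\widetilde{\nabla_x\varphi}=\iota^{0,1}_\SG(\nabla_x\varphi)$ and $\tilde{y}\widetilde{\nabla_\theta\varphi}=\iota^{1,0}_\SG(\nabla_\theta\varphi)$ as in Remark \ref{rem:ESiso}.

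From here both claims follow. For the first, Theorem \ref{thm:ellclass} combined with Proposition \ref{prop:prschar} tells us that $\Phi$ is $\SG$-elliptic of order $(2,2)$ if and only if $\iota^{2,2}_\SG(\Phi)$ is nowhere zero on $\Bt$; since $A^2,B^2>0$ and the two summands are nonnegative, this is equivalent to the joint non-vanishing of the pair $(\tilde{\gamma}\widetilde{\nabla_x\varphi},\tilde{y}\widetilde{\nabla_\theta\varphi})$ on $\Bt$. For the second, Definition \ref{def:localell} and Definition \ref{def:Cpdef} give $(y_0,\gamma_0)\in\Cpt$ precisely when $\iota^{2,2}_\SG(|\pi_{0,1}\cdot\nabla_\theta\varphi|^2)(y_0,\gamma_0)=0$, and by the second identity above this reduces to $\tilde{y}\widetilde{\nabla_\theta\varphi}(y_0,\gamma_0)=0$, as asserted. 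The only substantive point to check is the multiplicativity of $\iota^{m_e,m_\psi}_\SG$ (immediate from \eqref{eq:sgclident}) and the strict positivity of $A$ and $B$; everything else is bookkeeping.
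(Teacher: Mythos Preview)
Your proof is correct and follows essentially the same approach as the paper's: both apply the isomorphism $\iota^{2,2}_\SG$ to $\Phi$, use multiplicativity to factor out $\iota^{1,0}_\SG(\pi_{1,0})$ and $\iota^{0,1}_\SG(\pi_{0,1})$, and then invoke the ellipticity of $\pi_{1,0},\pi_{0,1}$ to conclude. Your version is slightly more explicit in verifying the strict positivity of $A$ and $B$, whereas the paper simply cites ellipticity of $\pi_{1,0},\pi_{0,1}$; otherwise the arguments coincide.
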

\begin{proof}
By Proposition \ref{prop:prschar}, we have that $\Phi$ is elliptic if and only if $\iota_\SG^{2,2}(\Phi)$ is nowhere vanishing on $\Bt$. We can also write,
\begin{align*}
\iota_\SG^{2,2}(\Phi)(y,\gamma)&=\tilde{\gamma}^2\tilde{y}^2 \left(\jap{x}^2|\nabla_x \varphi(x,\theta)|^2+\jap{\theta}^2|\nabla_\theta \varphi(x,\theta)|^2\right)\big|_{(x,\theta)=(\iota^{-1}(y),\iota^{-1}(\gamma))}\\
&=\left[|(\iota_\SG^{1,0}(\pi_{1,0})\cdot\iota_\SG^{0,1}(\nabla_x\varphi)|^2 + 
|(\iota_\SG^{0,1}(\pi_{0,1})\cdot\iota_\SG^{1,0}(\nabla_\theta\varphi)|^2\right](y,\gamma)
\end{align*}
Since $\pi_{1,0}$ and $\pi_{0,1}$ are elliptic, $(\iota_\SG^{1,0}(\pi_{1,0}))^2$ and 
$(\iota_\SG^{0,1}(\pi_{0,1}))^2$ are nowhere vanishing, which proves the first assertion. The characterization of $\Cpt$ follows by repeating the same argument for $|\pi_{0,1}\cdot\nabla_\theta\varphi|^2$, in view of Definition \ref{def:Cpdef}.
\end{proof}
We may then look at the map $\lambda_\varphi:\RRd\times\RRs\rightarrow\RRd\times\RRd$ given by $(x,\theta)\mapsto(x,\nabla_x\varphi(x,\theta))$. We want to find an analogue to this function on $(\BBd)^o\times(\BBs)^o$ that extends it to (parts of) the boundary. We start by considering the map
\[
	(y,\gamma)\mapsto \left(\iota^{-1}(y),\, \widetilde{\nabla_x\varphi}(y,\gamma)\right),\]
defined on $(\BBd)^o\times(\BBs)^o$. We may compactify the image space to $\BBd\times\BBd$, by means of the map $\iota\times\iota$, to look at the extension of
\begin{equation}
\label{eq:tildelambdadef}
\lpt\big|_{(\BBd)^o\times(\BBs)^o}=(\iota\times\iota)\circ \left((\iota^{-1}\times\iota^{-1})^*\lambda_\varphi\right)
\end{equation}
to the subset 
\begin{equation}
	\label{eq:lptext}
	\begin{aligned}
	\Et&=((\BBd)^o\times(\BBs)^o)\sqcup\Bt^e\sqcup\Bt_\mathrm{ell},
	\\
	\Bt_\mathrm{ell}&=\{(y_0,\gamma_0)\in\Btp\cup\Btpe\,\colon 
	\text{ $|\nabla_x\varphi|^2$ is elliptic at $(y_0,\gamma_0)$}
	\}.
	\end{aligned}
\end{equation}
\begin{rem}
\label{rem:tldef}
This construction may be visualized through the following commuting diagram:
\begin{center}
\begin{tikzpicture}
  \matrix (m) [matrix of math nodes,row sep=3em,column sep=4em,minimum width=2em]
  {
     \Et & \BBd\times\BBd \\
     \BBdo\times\left(\BBs\right)^o & \BBdo\times\BBdo \\
     \RRd\times\RRs & \RRd\times\RRs \\};
  \path[right hook->]
    (m-2-1) edge (m-1-1)
    (m-2-2) edge (m-1-2);
  \path[->]
    (m-1-1) edge node [above] {${\lpt}$} (m-1-2)
    (m-2-1) edge node [above] {${\lpt}$} (m-2-2)
    (m-3-1) edge node [above] {${\lambda_\varphi}$} (m-3-2)        
    (m-2-1) edge node [left] {$\iota^{-1}\times\iota^{-1}$}(m-3-1)
    (m-3-2) edge node [right] {$\iota\times\iota$}(m-2-2);
\end{tikzpicture}
\end{center}
\end{rem}
Indeed, we know by Theorem \ref{thm:repr} that the map 
$(\iota^{1,0}_\SG\times\iota^{0,1}_\SG)\lambda_\varphi:\BBd\times\BB^s\rightarrow \BBd\times\RRd$ given by 
\begin{equation}
\label{eq:tildelambdasmooth}
(y,\gamma)\mapsto \left(y,\tilde{\gamma}\,\widetilde{\nabla_x\varphi}(y,\gamma)\right)
\end{equation}
is smooth up to the boundary. We will show that, close to the boundary components of 
$\Et$, this property yields the desired extension of $\lpt$. 
\begin{prop}
	\label{prop:tl}
	$\lpt$ defined on $(\BBd)^o\times(\BBs)^o$ by \eqref{eq:tildelambdadef},
	can be extended as a smooth map to the subset $\Et\subset\BBd\times\BBs$ defined
	in \eqref{eq:lptext}.
\end{prop}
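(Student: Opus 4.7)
The strategy is to exploit the fact that, while $\widetilde{\nabla_x\varphi}$ may blow up as one approaches $\partial(\BBd\times\BBs)$, the rescaled version $f:=\tilde{\gamma}\widetilde{\nabla_x\varphi}=\iota^{0,1}_\SG(\nabla_x\varphi)$ belongs to $\Sm(\BBd\times\BBs)$ by Theorem \ref{thm:repr}, since $\nabla_x\varphi\in\SG^{0,1}_\cl(\RRd\times\RRs)$. The proof then consists of patching together smooth extensions on the two pieces $\Bte$ and $\Bt_\mathrm{ell}$ of $\partial\Et$, using $f$ in two different ways.

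First, on $\Bte=\SSSd\times(\BBs)^o$ one has $\tilde{\gamma}>0$ in a neighbourhood, so $\widetilde{\nabla_x\varphi}=\tilde{\gamma}^{-1}f$ is itself smooth up to $\Bte$. Since $\iota\colon\RRd\to\BBdo$ is a diffeomorphism, the composition $\iota\circ\widetilde{\nabla_x\varphi}$ is smooth as a map into $\BBd$, and together with the (trivially smooth) first component $y$ it yields the required extension of $\lpt$ to $((\BBd)^o\times(\BBs)^o)\cup\Bte$.

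Next, on $\Bt_\mathrm{ell}\subset\Btp\cup\Btpe$ one has $\tilde{\gamma}=0$, but the ellipticity hypothesis on $|\nabla_x\varphi|^2\in\SG^{0,2}_\cl$ applied to Proposition \ref{prop:prschar} gives that $|f|^2=\iota^{0,2}_\SG(|\nabla_x\varphi|^2)$ is smooth and nowhere vanishing near any point of $\Bt_\mathrm{ell}$. Shrinking a neighbourhood so that $|f|$ is bounded away from zero and $\tilde{\gamma}$ is small, one has $|\widetilde{\nabla_x\varphi}|=\tilde{\gamma}^{-1}|f|>3$, so the explicit formula from Theorem \ref{thm:repr} applies:
\[
\iota\bigl(\widetilde{\nabla_x\varphi}(y,\gamma)\bigr)
=\frac{\widetilde{\nabla_x\varphi}}{|\widetilde{\nabla_x\varphi}|}\Bigl(1-\tfrac{1}{|\widetilde{\nabla_x\varphi}|}\Bigr)
=\frac{f(y,\gamma)}{|f(y,\gamma)|}\Bigl(1-\frac{\tilde{\gamma}}{|f(y,\gamma)|}\Bigr).
\]
The right-hand side is manifestly smooth in $(y,\gamma)$ up to $\tilde{\gamma}=0$, because $f$ is smooth and non-vanishing and $\tilde{\gamma}\in\Sm(\BBs)$. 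This provides the extension of $\lpt$ over $\Bt_\mathrm{ell}$.

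The two local extensions agree with each other, and with the original definition of $\lpt$ on the interior, because all three are continuous and coincide on the interior of $\BBd\times\BBs$, where the second-component formula $\iota\circ\widetilde{\nabla_x\varphi}$ makes unambiguous sense. Patching via a locally finite cover of $\Et$ then yields a well-defined smooth map $\lpt\colon\Et\to\BBd\times\BBd$. The main technical point is the rewriting in the second step: one has to recognize that the blow-up of $\widetilde{\nabla_x\varphi}$ of order $\tilde{\gamma}^{-1}$ is \emph{exactly} the order compensated by the large-argument asymptotics of $\iota$, so that after passing to the boundary compactification the radial part $f/|f|$ remains, multiplied by a correction $(1-\tilde{\gamma}/|f|)$ that is regular at $\tilde{\gamma}=0$.
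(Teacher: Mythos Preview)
Your proposal is correct and follows essentially the same approach as the paper: both arguments reduce to the smoothness of $f=\tilde{\gamma}\widetilde{\nabla_x\varphi}\in\Sm(\BBd\times\BBs)$, treat $\Bte$ by observing $\tilde{\gamma}>0$ there, and handle $\Bt_\mathrm{ell}$ via the explicit large-argument formula for $\iota$ (your displayed identity is exactly the paper's \eqref{eq:ltpexplicit}). The paper's version is slightly more explicit, expanding $\nabla_x\varphi$ in terms of its principal parts $\varphi^e,\varphi^\psi,\varphi^{\psi e}$ and treating $\Btp$ and $\Btpe$ separately, whereas you work directly with the abstract smooth function $f$; this streamlines the exposition without changing the underlying argument.
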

\begin{proof}
Since $\iota$ is a diffeomorphism, it is clear that $\lpt$ is smooth in the interior, i.e. on $(\BBd)^o\times\left(\BBs\right)^o$. So, it is enough that we look at \eqref{eq:tildelambdadef} for $|y|,|\gamma|>2/3$.
It is also clear that we have to prove the existence of the extension only for the second component 
of $\lpt$, since the first one coincides with $\pr_1$, the projection on the first set of variables, which is of course smoothly extendable from the
interior to the whole of $\BBd\times\BBs$.

By Definition \ref{def:sgclass-b} and Proposition \ref{prop:prsder}, we have, for a vector-valued symbol
$p\in\SG^{-1,1}$, 
\begin{equation}
\label{eq:lptbe}
\begin{aligned}
	\iota(\widetilde{\nabla_x\varphi}(y,\gamma))&=
	\iota\left(
		\nabla_x\varphi^e\left(\frac{y}{|y|}(1-|y|)^{-1},\frac{\gamma}{|\gamma|}(1-|\gamma|)^{-1}\right)
		+\tilde{p}(y,\gamma)
	\right)
	\\
	&=
	\iota\left(
		\nabla_x\varphi^e\left(\frac{y}{|y|},\frac{\gamma}{|\gamma|}(1-|\gamma|)^{-1}\right)
		+\tilde{p}(y,\gamma)\right).
\end{aligned}
\end{equation}
Then, $\lpt$ can be extended smoothly to 
\[
	A_1=\{y\in\BBd\colon 2/3<|y|\le1 \}\times\{\gamma\in\BBs\colon |\gamma|< r^\prime\},
\]
with arbitrary $r^\prime$, $1>r^\prime>2/3$. In fact, this is clearly true for the first term appearing in the argument of $\iota$ in the right hand side of \eqref{eq:lptbe}. For the second term, it is enough to observe that, by Theorem \ref{thm:repr}, for any $p\in\SG^{-1,1}$, $\tilde{p}\in\tilde{y}\tilde{\gamma}^{-1}\Sm(\BBd\times\BBs)$, that is, also $\tilde{p}$ is smooth on $A_1$. Moreover, the values of both such extensions to $A_1$ remain bounded, and $\iota$ is smooth on $\RRd$. This implies that $\lpt$ can be smoothly extended to any point in $\Bte$.

We now consider the subset of $\BBd\times\BBs$ given by 
\[
	A_2=\{y\in\BBd\colon 2/3<|y|<1 \}\times\{\gamma\in\BBs\colon |\gamma|> r\},
\]
$r^\prime>r>2/3$, so that, of course, $\Bt_\mathrm{ell}\subset A_2$. Observe that, again by Definition \ref{def:sgclass-b} and Proposition \ref{prop:prsder}, we can also write, for a vector-valued symbol $q\in\SG^{0,0}$,
\begin{equation}
\label{eq:lptbp}
\begin{aligned}
	\iota(\widetilde{\nabla_x\varphi}(y,\gamma))&=
	\iota\left(
		\nabla_x\varphi^\psi\left(\frac{y}{|y|}(1-|y|)^{-1},\frac{\gamma}{|\gamma|}(1-|\gamma|)^{-1}\right)
	+\tilde{q}(y,\gamma)
	\right)
	\\
	&=
	\iota\left(
		\nabla_x\varphi^\psi\left(\frac{y}{|y|}(1-|y|)^{-1},\frac{\gamma}{|\gamma|}\right)(1-|\gamma|)^{-1}
	+\tilde{q}(y,\gamma)
	\right).
\end{aligned}
\end{equation}
$\tilde{q}$ can be extended smoothly to $\BBd\times\BBs$, since, by Theorem
\ref{thm:repr}, for any $q\in\SG^{0,0}_\cl$, $i^{0,0}_\SG(q)=\tilde{q}\in\Sm(\BBd\times\BBs)$. 
By Propositions \ref{prop:prsder} and \ref{prop:prschar}, Definition \ref{def:localell} and Remark \ref{rem:localell}, at points $(y_0,\gamma_0)\in\Bt_\mathrm{ell}$ we have
\begin{align*}
	\text{either }& \quad 
	(y_0,\gamma_0)\in\Btp \text{ and }\nabla_x\varphi^\psi(\iota^{-1}(y_0),\gamma_0)\not=0,
	\\
	\text{or }& \quad 
	(y_0,\gamma_0)\in\Btpe \text{ and } \nabla_x\varphi^{\psi e}(y_0,\gamma_0)\not=0.
\end{align*}
In the former case, the norm of the first term in the argument of $\iota$ in the right hand side of
\eqref{eq:lptbp} tends to $+\infty$ when $|\gamma|\to1^-$.
Then, sufficiently close to $(y_0,\gamma_0)$ we have 
\begin{align} \label{eq:ltpexplicit}
\iota(\widetilde{\nabla_x\varphi})=
\frac{\widetilde{\nabla_x\varphi}}{|\widetilde{\nabla_x\varphi}|}\left(1-\frac{1}{|\widetilde{\nabla_x\varphi}|}\right)
= \frac{\tilde{\gamma}\widetilde{\nabla_x\varphi}}{|\tilde{\gamma}\widetilde{\nabla_x\varphi}|}\left(1-\frac{\tilde{\gamma}}{|\tilde{\gamma}\widetilde{\nabla_x\varphi}|}\right),
\end{align}
where $\tilde{\gamma}\widetilde{\nabla_x\varphi}=\iota^{0,1}_\SG(\nabla_x\varphi)$ is smooth up to the boundary. Moreover, 
\[
	\tilde{\gamma}\widetilde{\nabla_x\varphi}(y,\gamma)=\tilde{\gamma}(\iota^{-1}\times\iota^{-1})^* \nabla_x\varphi(y,\gamma)=
	\nabla_x\varphi^\psi\left(\frac{y}{|y|}(1-|y|)^{-1},\frac{\gamma}{|\gamma|}\right)
	+\tilde{\gamma}\cdot\tilde{q}(y,\gamma),
\]
so such an expression cannot vanish close to $(y_0,\gamma_0)$, since 
$|\nabla_x\varphi^\psi(\iota^{-1}(y_0),\gamma_0)|=k>0$ and 
$|\tilde{\gamma}\cdot\tilde{q}(y,\gamma)|<k/2$ for $(y,\gamma)\in V$,
suitably small neighborhood of $(y_0,\gamma_0)$, by
$|\tilde{\gamma}(\gamma_0)\cdot\tilde{q}(y_0,\gamma_0)|=0$. Then the smooth extendability of \eqref{eq:ltpexplicit} to points in $\Bt_\mathrm{ell}\cap\Btp$ follows.

The remaining case, that is, the result for $(y_0,\gamma_0)\in\Bt_\mathrm{ell}\cap\Btpe$,
follows in a similar way, writing
\begin{align*}
\iota(\widetilde{\nabla_x\varphi}(y,\gamma))&=
		\iota((\iota^{-1}\times\iota^{-1})^*\nabla_x\varphi(y,\gamma))
		\\
		&=
	\iota\left(
		\nabla_x\varphi^{\psi e}
		\left(\frac{y}{|y|}(1-|y|)^{-1},\frac{\gamma}{|\gamma|}(1-|\gamma|)^{-1}\right)
	+\tilde{p}(y,\gamma)+\tilde{q}(y,\gamma)
	\right)
	\\
	&=
	\iota\left(
		\nabla_x\varphi^{\psi e}\left(\frac{y}{|y|},\frac{\gamma}{|\gamma|}\right)(1-|\gamma|)^{-1}
	+\tilde{p}(y,\gamma)+\tilde{q}(y,\gamma)
	\right),
\end{align*}
with $p\in\SG^{-1,1}$, $q\in\SG^{0,0}$ and $\nabla_x\varphi(y_0,\gamma_0)\not=0$, so that
\[
	\tilde{\gamma}(\widetilde{\nabla_x\varphi}(y,\gamma)=
	\nabla_x\varphi^{\psi e}\left(\frac{y}{|y|},\frac{\gamma}{|\gamma|}\right)
	+\tilde{\gamma}\cdot\tilde{p}(y,\gamma)+\tilde{\gamma}\cdot\tilde{q}(y,\gamma),
\]
with the last two terms smoothly extendable to $(y_0,\gamma_0)$ and vanishing there. 

The proof is complete. 
\end{proof}
\begin{rem}
	\label{rem:tplcpt}
	Observe that, in view of the assumption \eqref{eq:phaseineq}, $\lpt$ is well defined in
	a neighborhood of $\Cpt$. In fact, by Propositions \ref{prop:prsder} and \ref{prop:prschar}, 
	Definition \ref{def:localell}, Remark \ref{rem:localell}, and 
	Lemma \ref{lem:Cptchar}, at points $(y_0,\gamma_0)\in\Cpt$ we necessarily have
	$\tilde{\gamma}\nabla_y\tilde{\varphi}(y_0,\gamma_0)\not=0\Leftrightarrow
	|\nabla_x\varphi|^2$ is elliptic at $(y_0,\gamma_0)$. Since this is equivalent to the fact
	that $\iota^{0,1}_\SG(\nabla_x\varphi)$ does not vanish at $(y_0,\gamma_0)$, the same holds,
	by continuity, in a neighborhood of $(y_0,\gamma_0)$ in $\Bt$.
\end{rem}
%
%
We are now able to obtain $\Lpt$ in terms of $\lpt$ and $\Cpt$:
\begin{lem}
\label{lem:Lptchar}
Let $\varphi\in\SG^{1,1}_\cl(\RRd\times\RRs)$ be a classical $\SG$-phase function. 
Then, we have $\Lpt=\lpt(\Cpt)$.
\end{lem}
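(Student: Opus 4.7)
The plan is to establish the two inclusions $\lpt(\Cpt)\subset\Lpt$ and $\Lpt\subset\lpt(\Cpt)$ separately. Note first that, by Remark \ref{rem:tplcpt}, one has $\Cpt\subset\Et$, so that $\lpt|_{\Cpt}$ is well defined and continuous via Proposition \ref{prop:tl}; moreover $n_e=n_\psi=1$ throughout, since $\varphi\in\SGcl^{1,1}$.

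For $\lpt(\Cpt)\subset\Lpt$, I would fix $(y_0,\gamma_0)\in\Cpt$ and set $(y_0,\eta_0):=\lpt(y_0,\gamma_0)$, then argue by contradiction. Supposing $(y_0,\eta_0)\in(\Lpt)^c$ yields open neighbourhoods $U\ni(y_0,\eta_0)$ and $V\supset\pr_{\Cpt}^{-1}(U)$, together with a constant $c>0$, such that $|\nabla_x\varphi(x,\theta)-\xi|\ge c(\langle\theta\rangle+|\xi|)$ on $(\iota^{-1}\times\iota^{-1}\times\iota^{-1})(V^o)$. Choose any sequence $(y_n,\gamma_n)\to(y_0,\gamma_0)$ in $(\BBd)^o\times(\BBs)^o$ and set $\xi_n:=\nabla_x\varphi(x_n,\theta_n)$, $\eta_n:=\iota(\xi_n)$. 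By continuity of $\lpt$ one has $\lpt(y_n,\gamma_n)=(y_n,\eta_n)\to(y_0,\eta_0)$, so the triple $(y_n,\gamma_n,\eta_n)\to(y_0,\gamma_0,\eta_0)\in\pr_{\Cpt}^{-1}(U)\subset V$ lies in $V^o$ for $n$ large. The estimate then reads $0=|\nabla_x\varphi(x_n,\theta_n)-\xi_n|\ge c(\langle\theta_n\rangle+|\xi_n|)\ge c>0$, a contradiction.

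For the reverse inclusion, let $(y_0,\eta_0)\in\Lpt$. Pick nested open neighbourhoods $U_n\searrow\{(y_0,\eta_0)\}$ and open $V_n$ contained in the $(1/n)$-neighbourhood of $\pr_{\Cpt}^{-1}(U_n)$. Negating the defining property of $\Lpt$ with constant $c=1/n$ produces points $(y_n,\gamma_n,\eta_n)\in V_n^o$ with
\[
	|\nabla_x\varphi(x_n,\theta_n)-\xi_n|<\tfrac{1}{n}\bigl(\langle\theta_n\rangle+|\xi_n|\bigr),
\]
where $(x_n,\theta_n,\xi_n)=(\iota^{-1})^3(y_n,\gamma_n,\eta_n)$. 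Compactness of $\BBd\times\BBs\times\BBd$ delivers a subsequential limit $(y_0,\gamma^*,\eta_0)$, and the shrinkage of $V_n$ together with closedness of $\Cpt$ forces $(y_0,\gamma^*)\in\Cpt$. Setting $\gamma_0:=\gamma^*$ and using continuity of $\lpt$ on $\Et$, one has $\lpt(y_n,\gamma_n)=(y_n,\iota(z_n))\to\lpt(y_0,\gamma_0)$, with $z_n:=\nabla_x\varphi(x_n,\theta_n)$. It remains to show $\iota(z_n)\to\eta_0$, or equivalently $\iota(z_n)-\iota(\xi_n)\to 0$ in $\BBd$.

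This last identification is the main obstacle and requires a case distinction on the stratum of $\partial(\BBd\times\BBs\times\BBd)$ containing the limit triple. When $|\theta_n|$ and $|\xi_n|$ are both bounded, the estimate directly gives $|z_n-\xi_n|=O(1/n)\to 0$, and continuity of $\iota$ on compact sets finishes the argument. When $|\theta_n|$ is bounded but $|\xi_n|\to\infty$, the estimate yields $|z_n-\xi_n|/|\xi_n|\to 0$, so $z_n$ and $\xi_n$ have comparable norms and the same asymptotic direction, and their $\iota$-images converge to the same point of $\SSSd$. When $|\theta_n|\to\infty$, I would invoke Proposition \ref{prop:prschar} and Lemma \ref{lem:Cptchar}: ellipticity of $\Phi$ combined with $(y_0,\gamma_0)\in\Cpt$ forces $\tilde{\gamma}\widetilde{\nabla_x\varphi}(y_0,\gamma_0)\ne 0$, so by the classical structure $\nabla_x\varphi\in\SGcl^{0,1}$ one obtains $z_n=\langle\theta_n\rangle\,(v+o(1))$ for some $v\ne 0$; the estimate then transports the same asymptotics to $\xi_n$, so that $\iota(z_n)$ and $\iota(\xi_n)$ both converge to $v/|v|\in\SSSd$. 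The same computation simultaneously rules out the a priori possibility that $\eta_0\in(\BBd)^o$ with $|\theta_n|\to\infty$, since a bounded $|\xi_n|$ together with $|z_n|\sim|v|\,\langle\theta_n\rangle\to\infty$ would violate the estimate for large $n$.
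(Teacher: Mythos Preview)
Your argument is correct. One small omission: before invoking the dichotomy $\Lpt$ versus $(\Lpt)^c$ in the first inclusion you should record that $\lpt(\Cpt)\subset\Wt$; this follows from the explicit description in the proof of Proposition~\ref{prop:tl} (on $\Cpte$ the first component lies in $\SSSd$, while on $\Cptp\cup\Cptpe$ formula~\eqref{eq:ltpexplicit} with $\tilde\gamma=0$ forces the second component into $\SSSd$). Note also that your Case~2 is in fact vacuous, since $\nabla_x\varphi\in\SG^{0,1}$ and $|\theta_n|$ bounded force $|z_n|$ bounded, contradicting $|z_n|\sim|\xi_n|\to\infty$; but treating an empty case does no harm.

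The paper does not actually write out its own proof, stating only that it ``follows the same methods used in the proof of Proposition~\ref{prop:tl}''. That proposition proceeds by direct computation on each boundary face of $\BBd\times\BBs$, expanding $\nabla_x\varphi$ via its principal parts $\nabla_x\varphi^e$, $\nabla_x\varphi^\psi$, $\nabla_x\varphi^{\psi e}$ and manipulating the explicit form of $\iota$. The intended argument is therefore presumably a stratum-by-stratum translation of the ellipticity-type estimate in~\eqref{eq:spphi} into nonvanishing conditions on these principal symbols, matched against the explicit action of $\lpt$; this is the viewpoint that feeds directly into Lemma~\ref{lem:Lpchar}. Your route is instead a soft sequential and compactness argument that unwinds the quantifiers in Definition~\ref{def:Cpdef} directly, invoking the classical structure only once (in Case~3, to obtain $z_n=\langle\theta_n\rangle(v+o(1))$). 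The paper's indicated approach yields more explicit formulae as a by-product; yours is shorter and makes transparent why the estimate defining $(\Lpt)^c$ is precisely the negation of ``$\xi$ is an asymptotic value of $\nabla_x\varphi$ along $\Cpt$''.
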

For the sake of brevity, we omit the details of the proof, which follows the same methods used in
the proof of Proposition \ref{prop:tl} above.
We now impose a regularity condition on $\varphi$, namely, its \textit{$(\SG-)$non-degeneracy}.
\begin{defn}[Non-degenerate classical $\SG$-phase functions]
\label{def:nondegphase}
Let $\varphi\in\SGcl^{1,1}(\RRd\times\RRs)$ be a classical $\SG$-phase function. Then $\varphi$ is called \textit{non-degenerate} if the differentials $\left\{d\left(\tilde{y}\widetilde{\partial_{\theta_j}\varphi}|_{X}\right)\right\}_{j=1,\dots, s}$ form, for every $(y_0,\gamma_0)\in\,\Cpt$, a set of linearly independent vectors in $T^*_{(y_0,\gamma_0)}(X)$, where $X$ may be replaced by all boundary and corner components of $\BBd\times\BBs$, that is,
$$X\in\left\{\Bte,\, \Btp,\, \Btpe\right\}.$$
\end{defn}
Each of the boundary faces $\Bte$ and $\Btp$ are submanifolds (with boundary) of the manifold with corners $\BBd\times\BBs$ that intersect cleanly at their joint boundary $\Btpe$, that is, for every $(y_0,\gamma_0)\in\SSSd\times\SSS^{s-1}$ we have 
$$T_{(y_0,\gamma_0)}\Btpe=T_{(y_0,\gamma_0)}\Bte\cap T_{(y_0,\gamma_0)}\Btp.$$
We recall that, by Lemma \ref{lem:Cptchar}, $\Cpt$ is the set of boundary elements $(y_0,\gamma_0)$ jointly annihilated by $\tilde{y} \widetilde{\nabla_\theta\varphi}$, $j=1,\dots,s$. From that we are able to obtain a similar set-up for the different components of $\Cpt$, detailed in the next Proposition \ref{prop:ct}.
\begin{prop}
\label{prop:ct}
Let $\varphi\in\SGcl^{1,1}(\RRd\times\RRs)$ be a non-degenerate $\SG$-phase function. 
Then, the following properties hold true.
\begin{enumerate}
\item\label{it:Cpt1} The different components of $\Cpt$ are totally neat submanifolds of the corresponding boundary component in $\BBd\times\BBs$. That is, we have 
$$\Cpt=\underbrace{\Cpt^e}_{\subset\Bte} \cup\underbrace{\Cpt^\psi}_{\subset\Btp},$$ 
and their possible boundaries form a subset $\Cpt^{\psi e}$ of $\Btpe$. 
\item\label{it:Cpt2} The codimension of the respective component is always $s$, meaning $\dim(\Cpt^e)=\dim(\Cpt^\psi)=d-1$ and (if non-empty) $\dim(\Cpt^{\psi e})=d-2$.
\item\label{it:Cpt3} The tangent space to each face of $\Cpt^\bullet$ in $\Bt^\bullet$ may be calculated as 
$$\left\{v\in T_{(y_0,\gamma_0)}(\Bt^\bullet)\,\Big|\, \Big(d_{y,\gamma}\left(\tilde{y}\widetilde{\partial_{\theta_j}\varphi}\big|_{\Bt^\bullet}\right)\Big)v=0\ \forall j\in\{1,\dots,s\} \right\}.$$
\item\label{it:Cpt4} The intersection $\Cpt^\psi\cap\Cpt^e=\Cpt^{\psi e}$ is clean.
\end{enumerate}
\end{prop}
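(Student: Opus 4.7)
The proof rests on Lemma \ref{lem:Cptchar}, which exhibits $\Cpt$ globally as the joint zero set on $\Bt$ of the $s$ smooth functions $f_j := \tilde{y}\widetilde{\partial_{\theta_j}\varphi} = \iota^{1,0}_\SG(\partial_{\theta_j}\varphi)$, $j = 1, \ldots, s$. By Theorem \ref{thm:repr}, each $f_j$ is smooth on the whole manifold with corners $\BBd \times \BB^s$, and in particular on every boundary face $X \in \{\Bte, \Btp, \Btpe\}$. The hypothesis of non-degeneracy of Definition \ref{def:nondegphase} is exactly the statement that, on each such $X$, the differentials $d(f_j|_X)$, $j=1,\dots,s$, are pointwise linearly independent at every point of $\Cpt \cap X$.

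For items (2) and (3), I would apply the regular value theorem to the submersion $F^\bullet := (f_1, \ldots, f_s)|_{\Bt^\bullet}: \Bt^\bullet \to \RR^s$ on each face, obtaining that $\Cpt \cap \Bt^\bullet$ is a smooth submanifold of $\Bt^\bullet$ of codimension $s$ whose tangent space at any point is the joint kernel of the $d(f_j|_{\Bt^\bullet})$. The dimensions stated in (2) follow from $\dim \Bte = \dim \Btp = d+s-1$ and $\dim \Btpe = d+s-2$. Item (1) then follows by reading $\Cpt^e := \Cpt \cap (\Bte \cup \Btpe)$ and $\Cpt^\psi := \Cpt \cap (\Btp \cup \Btpe)$ as manifolds with boundary: their interior portions are the open submanifolds just constructed, their boundary portions lie in the common face $\Btpe$ and coincide with $\Cpt^{\psi e} := \Cpt \cap \Btpe$. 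The totally neat property with respect to the corner structure recalled in the Appendix (after \cite{MO}) is supplied by the independent non-degeneracy requirement on $\Btpe$, which forces $\Cpt^e$ and $\Cpt^\psi$ to meet $\Btpe$ transversally.

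For (4), the set-theoretic identity $\Cpt^e \cap \Cpt^\psi = \Cpt^{\psi e}$ is immediate from the disjoint stratification $\Bt = \Bte \sqcup \Btp \sqcup \Btpe$ and from the uniform global description $\Cpt = F^{-1}(0)$. For the cleanness, at a point $p \in \Cpt^{\psi e}$ one uses the clean intersection $T_p \Btpe = T_p \Bte \cap T_p \Btp$ of the ambient boundary faces, together with the observation that $d(f_j|_{\Btpe})_p$ is the restriction of $d(f_j|_{\Bte})_p$ (equivalently of $d(f_j|_{\Btp})_p$) to $T_p \Btpe$. A short linear-algebra argument based on (3) then yields $T_p \Cpt^e \cap T_p \Cpt^\psi = T_p \Cpt^{\psi e}$, establishing cleanness.

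The main subtlety is the neat-submanifold part of item (1), where one must match the face-by-face submanifold structure of $\Cpt$ with the notion of totally neat submanifold of a manifold with corners, as recalled in the Appendix. The key geometric input is transversality of the defining map $F$ to each boundary stratum, which is precisely what the face-wise formulation of Definition \ref{def:nondegphase} has been designed to supply; once this is unwound, items (2)--(4) reduce to standard applications of the regular value theorem and linear algebra.
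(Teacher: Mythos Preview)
Your proposal is correct and follows essentially the same route as the paper. The paper's proof is extremely terse: it obtains items (1)--(3) by a single invocation of Theorem~\ref{thm:linindepdiff} (the regular value theorem for manifolds with corners), applied to the functions $f_j=\tilde{y}\widetilde{\partial_{\theta_j}\varphi}$ on the closed faces $\overline{\Bte}=\SSSd\times\BBs$ and $\overline{\Btp}=\BBd\times\SSS^{s-1}$; the hypothesis of that theorem is precisely the face-wise linear independence in Definition~\ref{def:nondegphase}, and its conclusion delivers the totally neat submanifold structure, the codimension, and the tangent-space description in one stroke. Your face-by-face application of the ordinary regular value theorem followed by assembling the pieces is exactly an unpacking of what Theorem~\ref{thm:linindepdiff} does, and your explicit linear-algebra verification of cleanness in (4) spells out what the paper leaves as ``then, also the cleanness of the intersection follows.''
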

\begin{proof}
Statements \textit{(\ref{it:Cpt1})-(\ref{it:Cpt3})} are consequences of the regular value theorem for manifolds with corners, see Theorem \ref{thm:linindepdiff}. Then, also the cleanness of the intersection follows. 
\end{proof}
Since $\lpt$ is smooth up to the boundary in a neighborhood of $\Cpt$, we obtain a similar statement for 
$\Lpt$, in view of Lemma \ref{lem:Lptchar} and non-degeneracy, which causes $\lpt$ to be an immersion near $\Cpt$, in the sense of Theorem \ref{thm:imm}.
\begin{prop}
\label{prop:lpt}
Let $\varphi\in\SGcl^{1,1}(\RRd\times\RRs)$ be a non-degenerate $\SG$-phase function. 
Then, the following properties hold true.
\begin{enumerate}
\item\label{it:Lpt1} The different components of $\Lpt$ are each totally neat submanifolds of the corresponding boundary component $\BBd\times\BBs$. That is, we have 
$$\Lpt=\underbrace{\Lpt^e}_{\subset\Wte} \cup\underbrace{\Lpt^\psi}_{\subset\Wtp}$$ 
and their possible boundaries form a subset $\Lpt^{\psi e}$ of $\Wtpe$. 
\item\label{it:Lpt2} The codimension of the respective component is always $s$, meaning $\dim(\Lpt^e)=\dim(\Lpt^\psi)=d-1$ and (if non-empty) $\dim(\Lpt^{\psi e})=d-2$.
\item\label{it:Lpt3} The tangent space to each face of $\Lpt^\bullet$ in $\Wt$ may be calculated by means of the differential of $\lpt$, that is, via 
$$T\Lpt^\bullet=\left(d\left(\lpt|_{\Cpt^\bullet}\right)\right)T\Cpt^\bullet$$
\item\label{it:Lpt4} The intersection $\Lpt^\psi\cap\Lpt^e=\Lpt^{\psi e}$ is clean.
\end{enumerate}
\end{prop}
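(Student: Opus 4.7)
The plan is to transfer the submanifold structure of $\Cpt$, established in Proposition \ref{prop:ct}, to $\Lpt$ via the map $\lpt$, using Lemma \ref{lem:Lptchar} which asserts $\Lpt=\lpt(\Cpt)$. By Proposition \ref{prop:tl} and Remark \ref{rem:tplcpt}, $\lpt$ is smooth in a neighborhood of $\Cpt$ inside $\Et$, so restricting it to each boundary component $\Cpt^\bullet\subset\Bt^\bullet$, $\bullet\in\{e,\psi,\psi e\}$, yields a smooth map into the corresponding $\Wt^\bullet$. The core technical task is to show that $\lpt|_{\Cpt^\bullet}$ is an embedding onto its image; once this is done, items \textit{(\ref{it:Lpt1})} and \textit{(\ref{it:Lpt2})} follow by direct transfer of dimension and codimension from Proposition \ref{prop:ct}, item \textit{(\ref{it:Lpt3})} is the image-of-tangent-space formula of any embedding, and item \textit{(\ref{it:Lpt4})} follows from the analogous cleanness of $\Cpt^\psi\cap\Cpt^e=\Cpt^{\psi e}$ by pushing forward tangent spaces.

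The heart of the argument is therefore to verify that $d\lpt|_{T\Cpt^\bullet}$ is injective at every point. Since $\lpt(y,\gamma)=(y,\iota(\widetilde{\nabla_x\varphi}(y,\gamma)))$ has identity first component in $y$, any $v=(v_y,v_\gamma)\in T_{(y_0,\gamma_0)}\Cpt^\bullet$ with $d\lpt(v)=0$ must satisfy $v_y=0$; the task reduces to showing that $v_\gamma=0$. This uses two inputs together: first, $v\in T\Cpt^\bullet$ combined with $v_y=0$ forces $d_\gamma\!\left(\tilde{y}\widetilde{\partial_{\theta_j}\varphi}\big|_{\Bt^\bullet}\right)(v_\gamma)=0$ for all $j=1,\dots,s$, by Proposition \ref{prop:ct}\textit{(\ref{it:Cpt3})}; second, the vanishing of the $\gamma$-derivative of the second component of $\lpt$ on $v_\gamma$ couples to these equations via the symmetry of mixed partials of $\tilde{\varphi}$ (an analogue of the classical argument that $\partial_\theta\nabla_x\varphi=\transp(\partial_x\nabla_\theta\varphi)$). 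The non-degeneracy condition of Definition \ref{def:nondegphase}, which by definition says the $s$ differentials $d(\tilde{y}\widetilde{\partial_{\theta_j}\varphi}|_{\Bt^\bullet})$ are linearly independent in $T^*\Bt^\bullet$, will force $v_\gamma=0$ after combining these relations.

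With injectivity of the differential in hand, a dimension count (dimension of $\Cpt^\bullet$ equals $d-1$ or $d-2$ by Proposition \ref{prop:ct}\textit{(\ref{it:Cpt2})}, while $\Wt^\bullet$ has dimension $2d-1$ or $2d-2$) shows $\lpt|_{\Cpt^\bullet}$ is an immersion, and properness on each closed component $\overline{\Cpt^\bullet}$ upgrades it to an embedding, yielding \textit{(\ref{it:Lpt1})} and \textit{(\ref{it:Lpt2})}. Item \textit{(\ref{it:Lpt3})} is then the standard formula $T\Lpt^\bullet=(d\lpt|_{\Cpt^\bullet})\,T\Cpt^\bullet$. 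For \textit{(\ref{it:Lpt4})}, since $\lpt|_{\Cpt^{\psi e}}$ is an embedding whose image is $\Lpt^{\psi e}$, and $T\Cpt^{\psi e}=T\Cpt^\psi\cap T\Cpt^e$ at joint boundary points by Proposition \ref{prop:ct}\textit{(\ref{it:Cpt4})}, pushing forward by the injective $d\lpt$ preserves the equality $T\Lpt^{\psi e}=T\Lpt^\psi\cap T\Lpt^e$, giving cleanness.

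The main obstacle is the injectivity step on boundary points in $\Bt^{\psi e}$, where one has to work with the rescaling factors $\tilde{y}$ and $\tilde{\gamma}$ simultaneously. The symmetry of second derivatives of $\tilde{\varphi}=(\iota^{-1}\times\iota^{-1})^*\varphi$ has to be combined with the blow-down behavior of $\iota^{m_e,m_\psi}_\SG$ so that the non-degeneracy hypothesis, which concerns the rescaled quantities $\tilde{y}\widetilde{\partial_{\theta_j}\varphi}$, actually controls the $\gamma$-derivative of $\widetilde{\nabla_x\varphi}$ appearing in $\lpt$; carrying this through cleanly on each of the three boundary strata, and matching it with the corner-compatibility needed for \textit{(\ref{it:Lpt4})}, is where the calculation is most delicate.
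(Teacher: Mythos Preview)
Your proposal is correct and follows the same route as the paper: transfer the submanifold structure of $\Cpt$ to $\Lpt$ via $\lpt$, using Lemma \ref{lem:Lptchar} and the smoothness of $\lpt$ near $\Cpt$ from Proposition \ref{prop:tl}. The paper in fact gives essentially no proof beyond the one-line remark preceding the statement, so you have supplied the details the authors leave implicit; in particular, your injectivity argument for $d\lpt|_{T\Cpt^\bullet}$ (via $v_y=0$, the tangent-space description of $\Cpt^\bullet$, and symmetry of mixed partials combined with the rank-$s$ condition of non-degeneracy) is exactly the $\SG$-adaptation of the classical immersion argument the paper later alludes to in a footnote (Duistermaat, Lemma 2.3.2) during the proof of Lemma \ref{lem:alphaelpe}.
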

The aspect of clean intersection in Proposition \ref{prop:lpt} may be schematically visualized in $3$ dimensions, where the variables parallel to the corner, $(y^\parallel,\eta^\parallel)$, are compressed into one direction, see Figure \ref{fig:Lptintersect}.
\begin{figure}[ht!]
\begin{center}
\begin{tikzpicture}
  \node (A) at (2.5,3.5) {$\Wtp$};
  \node (B) at (6,0.5) {$\Wte$};
  \node (C) at (6.5,3) {$\Wtpe$}; 
  \draw[opacity=0.5] (3,1.5) -- (6,3);
  \draw[dashed,opacity=0.5] (1.5,2.25) -- (2.25,1.875);
  \draw[opacity=0.5] (2.25,1.875) -- (3,1.5) -- (3,0.75);
  \draw[dashed,opacity=0.5] (3,0.75) -- (3,0);
  \draw[dashed,opacity=0.5] (4.5,3.75) -- (5.25,3.375);
  \draw[opacity=0.5] (5.25,3.375) -- (6,3) -- (6,2.25);
  \draw[dashed,opacity=0.5] (6,2.25) -- (6,1.5);
  \draw[->,thick] (4,2) -- (5,2.5) node [below] {$\quad y^\parallel,\eta^\parallel$};
  \draw[->,thick] (4,2) -- (3,2.5) node [left] {$\tilde{y}$};
  \draw[->,thick] (4,2) -- (4,1) node [left] {$\tilde{\eta}$};
  \draw (4,2) .. controls (4,1) and (5,1) .. (4.5,0.5) node [right] {$\Lpt^e$};%
  \draw (4,2) .. controls (3,2.5) and (4,3) .. (3.5,3.5) node [right] {$\Lpt^\psi$};%
\end{tikzpicture}
\caption{Intersection of $\Lpt^e\subset\Wte$ and $\Lpt^\psi\subset\Wtp$ at the corner $\Wtpe$}
\label{fig:Lptintersect}
\end{center}
\end{figure}

\subsection{Conic submanifolds of $\WSG$ and symplectic structure at infinity}
\label{sec:sympl}
By means of the diffeomorphism $\iota^{-1}$, we may also write $\Cpt$ and $\Lpt$ in terms of more 
``classical'' objects, i.e. as subsets of $\B$ and $\WSG$, and characterize their components in terms 
of the principal parts of $\varphi$. Indeed, using also the map $\Gamma$ introduced in Definition \ref{def:wfs}, we have the following result.
\begin{lem}
\label{lem:Cpcl}
Let $\varphi\in\SGcl^{1,1}(\RRd\times\RRs)$ be a classical $\SG$-phase function. Then, we have
\begin{align*}
\Cpe&:=(\Gamma\times\iota^{-1})(\Cpt\cap\Bte) = \{(x_0,\theta_0)\in\WSG^e\,\colon\,\nabla_\theta\varphi^e(x_0,\theta_0)=0\}, \\
\Cpp&:=(\iota^{-1}\times\Gamma)(\Cpt\cap\Btp) = \{(x_0,\theta_0)\in\WSG^\psi\,\colon\,\nabla_\theta\varphi^\psi(x_0,\theta_0)=0\}, \\
\Cppe&:=(\Gamma\times\Gamma)(\Cpt\cap\Btpe) =  \{(x_0,\theta_0)\in\WSG^{\psi e}\,\colon\,\nabla_\theta\varphi^{\psi e}(x_0,\theta_0)=0\}.
\end{align*}
\end{lem}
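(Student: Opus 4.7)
The plan is to combine the characterization $\Cpt=\{(y_0,\gamma_0)\in\Bt\colon \iota^{1,0}_\SG(\nabla_\theta\varphi)(y_0,\gamma_0)=0\}$ from Lemma~\ref{lem:Cptchar} with the boundary-restriction formulas of Proposition~\ref{prop:prschar}, and then to invoke the positive homogeneity of the principal symbols to pass from the spheres $\SSSd,\SSS^{s-1}$ to the full conic sets $\Cpe$, $\Cpp$, $\Cppe$.

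First I would observe that $\nabla_\theta\varphi\in\SG^{1,0}_\cl(\RRd\times\RRs)$, and by Proposition~\ref{prop:prsder} its principal-symbol components agree with the $\theta$-gradients of the components of $\sigma(\varphi)$; that is, $(\nabla_\theta\varphi)^\bullet=\nabla_\theta\varphi^\bullet$ for $\bullet\in\{e,\psi,\psi e\}$. Then I would apply Proposition~\ref{prop:prschar} to $a=\nabla_\theta\varphi$ with $(m_e,m_\psi)=(1,0)$, separately on each boundary component. On $\Bte$, the factor $\tilde{\gamma}^{m_\psi}=1$, so $\iota^{1,0}_\SG(\nabla_\theta\varphi)(y_0,\gamma_0)=0$ is equivalent to $\nabla_\theta\varphi^e(y_0,\iota^{-1}(\gamma_0))=0$. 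On $\Btp$ the weight $\tilde{y}$ is non-vanishing since $y_0\in\BBdo$, so after dividing through the factor $\tilde{y}^{m_e}=\tilde{y}$ the vanishing condition becomes $\nabla_\theta\varphi^\psi(\iota^{-1}(y_0),\gamma_0)=0$. On $\Btpe$ the restriction is exactly the value of $\nabla_\theta\varphi^{\psi e}(y_0,\gamma_0)$, yielding the third condition directly.

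The final step is to apply the maps $\Gamma\times\iota^{-1}$, $\iota^{-1}\times\Gamma$, and $\Gamma\times\Gamma$ to $\Cpt\cap\Bte$, $\Cpt\cap\Btp$, $\Cpt\cap\Btpe$, respectively, and to use the homogeneity of the principal symbols: $\nabla_\theta\varphi^e$ and $\nabla_\theta\varphi^{\psi e}$ are positively homogeneous of degree $1$ in $x$, while $\nabla_\theta\varphi^\psi$ and $\nabla_\theta\varphi^{\psi e}$ are positively homogeneous of degree $0$ in $\theta$. In all three cases the zero locus is preserved under the relevant positive scaling, so the vanishing sets on the unit spheres extend bijectively to the conic zero sets described in the statement. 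The only subtlety is the bookkeeping of the normalization factors $\tilde{y}^{m_e}$ and $\tilde{\gamma}^{m_\psi}$ from Theorem~\ref{thm:repr}; with $(m_e,m_\psi)=(1,0)$ these are either identically $1$ or, where they appear nontrivially, non-vanishing on the relevant open face, so no genuine obstruction arises.
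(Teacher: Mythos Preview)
Your argument is correct and follows essentially the same route as the paper. The paper works with the original ellipticity formulation of $\Cpt$ from Definition~\ref{def:Cpdef} and invokes Theorem~\ref{thm:ellclass} on $|\nabla_\theta\varphi|^2$, while you start from the equivalent zero-set description in Lemma~\ref{lem:Cptchar} and then read off the principal parts via Proposition~\ref{prop:prschar}; both reductions hinge on Proposition~\ref{prop:prsder} to identify $\sigma_\bullet(\nabla_\theta\varphi)$ with $\nabla_\theta\varphi^\bullet$, and your explicit homogeneity step to pass from the spheres to the full conic sets is exactly what the paper leaves implicit.
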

\begin{proof}
By Definition \ref{def:Cpdef}, 
$$\Cpt=\big\{(y_0,\gamma_0)\in\Bt\,\colon\,|\nabla_\theta \varphi|^2\text{ is not elliptic at }(y_0,\gamma_0)\big\}.$$
By Theorem \ref{thm:ellclass} we have that $|\nabla_\theta \varphi|^2$ is elliptic at $(y_0,\gamma_0)\in\Bt$ if and only if the corresponding principal symbol is non-vanishing at the corresponding point $(x_0,\theta_0)\in\mathcal{B}$. By Proposition \ref{prop:prsder}, we have 
$$\sigma_\bullet(\nabla_\theta \varphi)=\sigma_\bullet\left(\sum_{j=1}^s |\partial_{\theta_j} \varphi|^2\right)=\sum_{j=1}^s |\partial_{\theta_j} \sigma_\bullet(\varphi)|^2,$$
for any of the components $\sigma_\bullet\in\{\sigma_e,\sigma_\psi,\sigma_{\psi e}\}$ of the principal symbol, and the assertion follows.
\end{proof}
\begin{rem}
Note that the $\Cpp$-component coincides with the standard notion $\Cp$ for a homogenous phase function $\varphi^\psi$.
\end{rem}
\begin{lem}
\label{lem:Lpchar}
Similarly to Lemma \ref{lem:Cpcl}, we define the triple $(\Lpe,\Lpp,\Lppe)\subset(\WSG^e,\WSG^\psi,\WSG^{\psi e})$ given by
\begin{align*}
\Lpe&:=\left\{\left((x,\nabla_x\varphi^e(x,\theta)\right)\,\big|\,\exists\,(x,\theta)\in\B^e:\ \nabla_\theta\varphi^e(x,\theta)=0\right\},\\
\Lpp&:=\left\{\left((x,\nabla_x\varphi^\psi(x,\theta)\right)\,\big|\,\exists\,(x,\theta)\in\B^\psi:\ \nabla_\theta\varphi^\psi(x,\theta)=0\right\},\\
\Lppe&:=\left\{\left((x,\nabla_x\varphi^{\psi e}(x,\theta)\right)\,\big|\,\exists\,(x,\theta)\in\B^{\psi e}:\ \nabla_\theta\varphi^{\psi e}(x,\theta)=0\right\}.
\end{align*}
Then we have $\Lpe=(\Gamma\times\iota^{-1})(\Lpt^e)$, $\Lpp=(\iota^{-1}\times\Gamma)(\Lpt^\psi)$ and $\Lppe=(\Gamma\times\Gamma)(\Lpt^{\psi e})$.
\end{lem}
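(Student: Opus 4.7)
The plan is to deduce the identities directly from Lemma~\ref{lem:Lptchar} (which asserts $\Lpt=\lpt(\Cpt)$) combined with the explicit boundary values of $\lpt$ already computed in the proof of Proposition~\ref{prop:tl}, and the companion identifications of $\Cp^\bullet$ with $\Cpt^\bullet$ from Lemma~\ref{lem:Cpcl}. Since $\lpt$ is continuous on $\Et$ and its first component is simply $\pr_1$ (preserving each boundary face of the first factor), while its second component lies in $\BBd$, we first observe the component-wise split $\lpt(\Cpt^\bullet)=\Lpt^\bullet$ for $\bullet\in\{e,\psi,\psi e\}$; it therefore suffices to check each image separately.

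For the $e$-component, fix $(y_0,\gamma_0)\in\Cpt^e\subset\Bte$. The expansion \eqref{eq:lptbe} shows that, at such a point, the error term $\tilde{p}$ with $p\in\SG^{-1,1}$ vanishes (because $\tilde{y}|_{\Bte}=0$), so
\[
\lpt(y_0,\gamma_0)=\bigl(y_0,\,\iota(\nabla_x\varphi^e(y_0,\iota^{-1}(\gamma_0)))\bigr).
\]
Applying $\Gamma\times\iota^{-1}$ produces $\{(\mu y_0,\nabla_x\varphi^e(y_0,\iota^{-1}(\gamma_0))):\mu>0\}$, and since $\nabla_x\varphi^e$ is homogeneous of degree $0$ in $x$, this equals $\{(x,\nabla_x\varphi^e(x,\theta)):x=\mu y_0,\,\theta=\iota^{-1}(\gamma_0)\}$. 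As $(y_0,\gamma_0)$ ranges over $\Cpt^e$, Lemma~\ref{lem:Cpcl} guarantees that $(\mu y_0,\iota^{-1}(\gamma_0))$ sweeps out exactly $\Cpe$, so the image is exactly $\Lpe$ as defined. The $\psi$-component is handled analogously using \eqref{eq:lptbp} (now the $e$-type error term is absorbed since $\tilde{\gamma}|_{\Btp}=0$) and the homogeneity of degree $1$ in $\theta$ of $\nabla_x\varphi^\psi$. The corner case $(y_0,\gamma_0)\in\Cpt^{\psi e}$ uses the expansion displayed at the end of the proof of Proposition~\ref{prop:tl}, where both error terms $\tilde{\gamma}\tilde{p}$ and $\tilde{\gamma}\tilde{q}$ vanish at the corner, yielding the value $\nabla_x\varphi^{\psi e}(y_0,\gamma_0)$; applying $\Gamma\times\Gamma$ and invoking the joint homogeneity of $\nabla_x\varphi^{\psi e}$ (degree $0$ in $x$, degree $1$ in $\theta$) together with Lemma~\ref{lem:Cpcl} matches the definition of $\Lppe$.

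The main obstacle is purely bookkeeping: one must track that the compactification map $\Gamma\times\iota^{-1}$ (respectively $\iota^{-1}\times\Gamma$, $\Gamma\times\Gamma$) interacts correctly with the degree of homogeneity of the relevant principal part $\varphi^\bullet$, so that the formal ``value at infinity'' of $\nabla_x\varphi$ recovered from the smooth extension $\iota^{0,1}_\SG(\nabla_x\varphi)$ coincides, after pullback, with $\nabla_x\varphi^\bullet$ evaluated at the preimage. Once the three boundary expansions from Proposition~\ref{prop:tl} are substituted into the definition of $\lpt$, no additional analysis is required: the identifications reduce to applying the homogeneous extension of the respective component of the principal symbol.
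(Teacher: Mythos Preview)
Your proposal is correct and follows essentially the same route as the paper: both arguments combine $\Lpt=\lpt(\Cpt)$ from Lemma~\ref{lem:Lptchar} with the explicit boundary expansions of $\lpt$ established in the proof of Proposition~\ref{prop:tl} (in particular \eqref{eq:ltpexplicit} for the $\psi$- and $\psi e$-faces), then invoke Lemma~\ref{lem:Cpcl} and the homogeneity of the relevant principal part $\varphi^\bullet$ to rewrite the image in the stated form. The only cosmetic difference is that the paper treats the $\psi$-component first and spells out the normalization $\tilde{\gamma}\widetilde{\nabla_x\varphi}/|\tilde{\gamma}\widetilde{\nabla_x\varphi}|$ explicitly via \eqref{eq:ltpexplicit}, whereas you handle the $e$-component first; your phrase ``the $e$-type error term is absorbed since $\tilde{\gamma}|_{\Btp}=0$'' is shorthand for exactly that normalization step.
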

\begin{proof}
We start with the proof for $\Lpp$, which coincides with the classical definition of the manifold of stationary points for a classical homogeneous phase function. We have, by Lemma \ref{lem:Lptchar},
$$(\iota^{-1}\times\Gamma)(\Lptp)=[(\iota^{-1}\times\Gamma)\circ\lpt](\Cpt^\psi).$$
By Lemma \ref{lem:Cptchar} we have $\tilde{y}\widetilde{\nabla_\theta\varphi}(y,\gamma)=0$ on $\Cpt$. Thus, in view of the same Lemma, $\tilde{\gamma}\widetilde{\nabla_x\varphi}(y,\gamma)\neq 0$. 
Recalling \eqref{eq:ltpexplicit} from the proof of Proposition \ref{prop:tl} and using the fact that $\tilde{\gamma}$ vanishes on $\Cpt^\psi$ and Remark \ref{rem:ESiso}, we can write
\begin{align*}
(\iota^{-1}\times\Gamma)(\Lptp)&=\left\{\left((\iota^{-1}(y),\mu \frac{\tilde{\gamma}\widetilde{\nabla_x\varphi}(y,\gamma)}{|\tilde{\gamma}\widetilde{\nabla_x\varphi}(y,\gamma)|}\right)\bigg|(y,\gamma)\in\Cpt^\psi,\mu>0\right\}\\
&=\left\{\left((x,\mu \frac{\nabla_x\varphi^\psi(x,\theta)}{|\nabla_x\varphi^\psi(x,\theta)|}\right)\bigg|(x,\theta)\in\Cpp,\mu>0\right\}
\end{align*}
where we have made use of the characterization of the principal symbol in Proposition \ref{prop:prschar} and of the commutativity property of differentiation and principal symbol map
in Proposition \ref{prop:prsder}. Making use of the homogeneity of $\varphi^\psi$, we may write this simply as
\begin{align*}
(\iota^{-1}\times\Gamma)(\Lptp)&=\left\{\left((x,\nabla_x\varphi^\psi(x,\theta)\right)\,\colon\,(x,\theta)\in\RRd\times(\RRs\setminus 0)\text{ and }\nabla_\theta\varphi^\psi(x,\theta)=0\right\},
\end{align*}
which is the definition of $\Lpp$, as claimed. In the same way we can write
\begin{align*}
(\Gamma\times\iota^{-1})(\Lpte)&=\big[\Gamma\times\iota^{-1})\lpt\big](\Cpte)\\
&=\left\{\big(\mu y,\widetilde{\nabla_x\varphi}(y,\gamma)\big)\, \colon\, (y,\gamma)\in\Cpte\right\}\\
&=\left\{\left(x,\nabla_x\varphi^e(x,\theta)\right)\, \colon\, (x,\theta)\in\Cpe\right\}.
\end{align*}
where we have again made use of Proposition \ref{prop:prschar}.\\
The characterization of the corner component $\Lppe$ follows in exactly the same way.
\end{proof}
As mentioned in the introduction to this subsection, it is a well-known result from the classical theory of Lagrangian submanifolds that $\Lpp$ as defined in Lemma \ref{lem:Lpchar} is a conic Lagrangian submanifold of $\RRd\times(\RRdz)$. We recall that a closed $d$-dimensional submanifold of $T^*\RRd\setminus\{0\}=\RRd\times(\RRdz)$ is called conic Lagrangian if it is conic in the second variable and the symplectic two-form $\omega$ vanishes over it. Equivalently one is able to obtain this by the vanishing of the canonical (or tautological) one-form $\alpha^\psi$ (we refer to Chapter 3.7. of \cite{Duistermaat} for the details). In what follows, we will obtain an analogous statement for $\Lambda^e$. 

From the discussion in \cite{Duistermaat} we deduce that the two formulations of a closed $d$-dimensional submanifold being conic Lagrangian are equivalent by noticing that (using local canonical coordinates)
\begin{align*}
d\alpha^\psi&=d(\xi dx)=d\xi\wedge dx=\omega\qquad \text{and}\\
i_{\varrho^\psi}\omega(\cdot)&=(d\xi\wedge dx)(\xi\cdot\partial_\xi,\cdot)=\xi dx=\alpha^\psi(\cdot),
\end{align*}
where the vector field $\varrho^\psi=\xi\cdot\partial_\xi$ can be invariantly obtained through the definition on $f\in \Sm(T^*M\setminus 0)$ by $\varrho^\psi(f)=\frac{d}{d\mu}f(\cdot,\mu \cdot)|_{\mu=1}$, that is, as the generator of the dilation in the fiber variable, see Section 21.1 in \cite{Hormander3}.
\begin{defn}
Let $M=\RRdz$. Define a vector field $\varrho^e$ on $T^*M$ by setting, for $f\in \Sm(T^*M)$,  $\varrho^e(f)=\frac{d}{d\mu}f(\mu\cdot,\cdot)|_{\mu=1}$. The \textit{exit-one-form} on $T^*M$ is defined as 
\[
	\alpha^e:=-i_{\varrho^e}\omega.
\]
\end{defn}
In local coordinates we have 
\[
	\varrho^e(f)=\frac{d}{d\mu}f(\mu x,\xi)|_{\mu=1}=x\cdot(\nabla_x f).
\]
Thus we have, in local canonical coordinates, $\alpha^e=-i_{\varrho^e}\omega=-x d\xi$, and therefore, again, $d\alpha^e=\omega$. We can now obtain
\begin{lem}
\label{lem:alphaelpe}
Let $\varphi$ be a non-degenerate classical $\SG$-phase function. Then $\alpha^e$ vanishes on $\Lpe$.
\end{lem}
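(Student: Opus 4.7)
The plan is to compute the pullback of $\alpha^e$ to $\Lpe$ directly in local canonical coordinates, and show it reduces to zero by combining the homogeneity of $\varphi^e$ with the stationary-phase condition defining $\Cpe$.

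First, by the preceding local computation, in canonical coordinates $(x,\xi)$ on $T^*(\RRdz)$ we have $\alpha^e = -x\,d\xi = -\sum_i x_i\,d\xi_i$. The manifold $\Lpe$ is, by Lemma \ref{lem:Lpchar}, the image of $\Cpe \subset \WSG^e = (\RRdz)\times\RRs$ under the map $\lambda_\varphi^e\colon (x,\theta)\mapsto (x,\nabla_x\varphi^e(x,\theta))$. Hence it suffices to verify that $(\lambda_\varphi^e)^*\alpha^e$ vanishes on $T\Cpe$. Pulling back and using $\xi_i = \partial_{x_i}\varphi^e(x,\theta)$, we obtain
\begin{equation*}
(\lambda_\varphi^e)^*\alpha^e = -\sum_{i,j} x_i\,\partial_{x_j}\partial_{x_i}\varphi^e\,dx_j \;-\; \sum_{i,k} x_i\,\partial_{\theta_k}\partial_{x_i}\varphi^e\,d\theta_k.
\end{equation*}

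Next I would exploit homogeneity. Since $\varphi^e\in\mathscr{H}^1_x$ is homogeneous of degree $1$ in $x$, the function $\partial_{x_j}\varphi^e$ is homogeneous of degree $0$ in $x$ and $\partial_{\theta_k}\varphi^e$ is homogeneous of degree $1$ in $x$. Euler's identity then yields
\begin{equation*}
\sum_i x_i\,\partial_{x_i}(\partial_{x_j}\varphi^e) = 0, \qquad \sum_i x_i\,\partial_{x_i}(\partial_{\theta_k}\varphi^e) = \partial_{\theta_k}\varphi^e,
\end{equation*}
so the first sum in the displayed expression vanishes identically on $\mathcal{B}^e$, while the second reduces to $-\sum_k \partial_{\theta_k}\varphi^e\,d\theta_k$.

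Finally, by Lemma \ref{lem:Cpcl} we have $\nabla_\theta\varphi^e(x,\theta)=0$ at every point of $\Cpe$, so the remaining term vanishes on $\Cpe$ and hence $(\lambda_\varphi^e)^*\alpha^e = 0$. Consequently $\alpha^e$ vanishes on $\Lpe$, as asserted. I do not expect any serious obstacle here: the argument is the natural $e$-analogue of the classical computation showing that the tautological one-form $\alpha^\psi=\xi\,dx$ vanishes on $\Lpp$ via the homogeneity (degree $1$ in $\theta$) of $\varphi^\psi$ together with $\nabla_\theta\varphi^\psi=0$, and the only substantive point is to use the correct homogeneity (now in $x$) of the component $\varphi^e$ of the principal symbol.
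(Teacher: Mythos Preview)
Your proof is correct and follows essentially the same approach as the paper's: pull back $\alpha^e$ along $\lambda_\varphi^e=(\pr_1,\nabla_x\varphi^e)$, use Euler's identity for the $x$-homogeneity of $\varphi^e$, and then invoke $\nabla_\theta\varphi^e=0$ on $\Cpe$. The only cosmetic difference is that you apply Euler's identity directly to the derivatives $\partial_{x_j}\varphi^e$ (degree $0$) and $\partial_{\theta_k}\varphi^e$ (degree $1$), whereas the paper first rewrites the sums via the product rule and then applies Euler's identity to $\varphi^e$ itself; the two computations are line-by-line equivalent.
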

\begin{rem}
We remark that, to our best knowledge, Lemma \ref{lem:alphaelpe} indeed requires its own proof, and cannot be simply ``deduced by symmetry'' from the classical theory, due to the ``asymmetrical definition'' of $\Lambda_\varphi$ with respect to $x$ and $\theta$.
\end{rem}
\begin{proof}
We adopt here the notation in \cite{Duistermaat}, and denote the induced coordinates on $T_{x}M$ by $\delta x$.

We first notice that $\Lpe$ is, by definition, the image of 
$$\Cpe=\{(x_0,\theta_0)\in\RRdz\times\RRs|\nabla_\xi \varphi^e(x_0,\theta_0)=0\}$$
which is a smooth manifold by non-degeneracy of $\varphi$, under the map $\lambda_\varphi^e=(\pr_1,\nabla_x\varphi^e)$.
We can thus calculate its tangent space in terms of that of the preimage\footnote{As in Lemma 2.3.2 of \cite{Duistermaat}, we can conclude from \eqref{eq:TCp} and \eqref{eq:TLp} that $(\pr_1,\nabla_x\varphi^e)$ is an immersion, and thus its image is an immersed $d$-dimensional conic submanifold.}.
$T_{(x,\theta)}\Cpe$ is given by
\begin{equation}
\label{eq:TCp}
(\delta x\cdot\nabla_x)\nabla_\theta\varphi+(\delta \theta\cdot\nabla_\theta)\nabla_\theta\varphi=0,
\end{equation}
and we thus have 
$$T_{(x,\nabla_x\varphi^e(x,\theta))}\Lambda^e_\varphi=
J(\pr_1,\nabla_x\varphi^e)\cdot T_{(x,\theta)}\Cpe.$$
Furthermore,
\begin{equation}
\label{eq:TLp}
J_{(x,\theta)}(\pr_1,\nabla_x\varphi^e)(\delta x,\delta \theta)=(\delta x, (\delta x\cdot \nabla_x)\nabla_x\varphi^e+(\delta\theta\cdot\nabla_\theta)\nabla_x\varphi^e).
\end{equation}
Computing $\alpha^e=x\cdot d\xi$ on such a vector, we see that
\begin{align}\notag
x\cdot (\delta x\cdot\nabla_x)&\nabla_x\varphi^e+x\cdot(\delta\theta\cdot\nabla_\theta)\nabla_x\varphi^e\\ \notag
&=\sum_{j,k} x_j (\delta x_k\partial_{x_k})\partial_{x_j}\varphi^e+\sum_j(\delta\theta\cdot\nabla_\theta)x_j\partial_{x_j}\varphi^e\\
&=\sum_{j,k} (\delta x_k\partial_{x_k})x_j\partial_{x_j}\varphi^e-\sum_{k}\delta x_k \partial_{x_k}\varphi^e+\sum_j(\delta\theta\cdot\nabla_\theta)x_j\partial_{x_j}\varphi^e. \label{eq:alphaeeval}
\end{align}
Since $\varphi^e$ is $1$-homogeneous in the first set of variables, by Euler's theorem for homogeneous functions this equals to
\begin{align*}
\eqref{eq:alphaeeval}&=\sum_{k} (\delta x_k\partial_{x_k})\varphi^e-\sum_{k}\delta x_k \partial_{x_k}\varphi^e+(\delta\theta\cdot\nabla_\theta)\varphi^e,\\
&=\delta\theta\cdot(\nabla_\theta\varphi^e)\stackrel{(x,\theta)\in\Cpe}{=}0
\end{align*}
This proves the assertion.
\end{proof}
Lastly, we observe that some additional properties that these kind of submanifolds, arising from $\SG$-classical phase functions, have, which limit their behaviour at infinity.
\begin{lem}
Let $\varphi\in\SGcl^{1,1}(\RRd\times\RRs)$ be a non-degenerate classical $\SG$-phase function. Then
\begin{enumerate}
\item The pairing $\langle x,\xi\rangle$ vanishes on $\Lambda^{\psi e}$.
\item $\Lambda^e_\varphi$ does not intersect $(\RRdz)\times\{0\}$.
\end{enumerate}
\end{lem}
\begin{proof}
On $\Lambda^{\psi e}$ we have, by Euler's theorem for homogeneous functions applied twice,
\[
	\langle x,\xi\rangle=\langle x,\nabla_x\varphi^{\psi e}(x,\theta)\rangle=\varphi^{\psi e}(x,\theta)
	=\theta\cdot\nabla_\theta\varphi^{\psi e}(x,\theta)=0.
\]
The second assertion follows from the characterization of $\Lambda^e_\varphi$ in Lemma \ref{lem:Lpchar}, since \eqref{eq:phaseineq} implies that if $\nabla_\theta\varphi^e(x,\theta)=0$ we have $\nabla_x\varphi^e(x,\theta)\neq 0$.
\end{proof}
\subsection{$\SG$-Lagrangian submanifolds}
\label{subs:sglagr}
A $\SG$-Lagrangian manifold $\tilde{\Lambda}$ is a ``submanifold'' of $\Wt$ with Lagrangian properties. Note that this is a slight abuse of notation since technically, $\Wt$ is not a manifold, but a pair of submanifolds of $\partial(\BBd\times\BBd)$ that intersect cleanly at their joint boundary, the corner of $\BBd\times\BBd$, see Figure \ref{fig:Lptintersect}. Therefore, we have to consider a number of cases when we define such a ``submanifold''. We consider the case where $\tilde{\Lambda}$ intersects the corner.
\begin{defn}
A \textit{Lagrangian submanifold} $\Lt$ of $\Wt$ is a pair of closed embedded submanifolds (with boundary) of $\BBd\times\BBd$, $\Lt=(\Lte,\Ltp)$, such that
\begin{itemize}
\item $(\Lte)^o\subset\Wte\setminus(\SSSd\times\{0\})$, $(\Ltp)^o\subset\Wtp$,
\item $\dim(\Lte)=\dim(\Ltp)=d-1$,
\item $(\Lte\cap\Ltp)=\partial\Lte=\partial\Ltp=:\Ltpe\subset\Wtpe$ (with $\dim(\Ltpe)=d-2$) and the intersection being clean,
\item on the associated conifications 
\[
\Lme:=(\Gamma\times\iota^{-1})\left((\Lte)^o\right),\;
\Lmp:=(\iota^{-1}\times\Gamma)\left((\Ltp)^o\right),\;
\Lmpe:=(\Gamma\times\Gamma)(\Ltpe),
\] 
we have
$$\alpha^e|_\Lme=0,\qquad\alpha^\psi|_\Lmp=0,\qquad\alpha^e|_\Lmpe=\alpha^\psi|_\Lmpe=0,$$
\item in canonical coordinates we have the \textit{conormality condition} $\langle x,\xi\rangle|_\Lmpe=0$.
\end{itemize}
The triple $(\Lme,\Lmp,\Lmpe)$ is then called a \textit{conic $\SG$-Lagrangian submanifold of $T^*\RRd$}.
\end{defn}
The ``degenerate cases'' are then straightforward to define. If there is no intersection in the corner, then one of the submanifolds (which will no longer have a boundary) may be empty, or they form two disjoint submanifolds of $\Wte$ and $\Wtp$ respectively.
We may sum up the results of the previous subsection as follows:
\begin{thm}
Let $\varphi\in\SGcl^{1,1}(\RRd\times\RRs)$ be a non-degenerate $\SG$-phase function. Then $\Lpt=(\Lpte,\Lptp)$ is a $\SG$-Lagrangian submanifold of $\Wt$.
\end{thm}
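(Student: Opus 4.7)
The plan is to verify the five bullet points in the definition of a $\SG$-Lagrangian submanifold one at a time, assembling the results that have already been proved and filling the remaining gap at the corner. Throughout I assume, implicitly, that $\varphi$ is an admissible non-degenerate $\SG$-phase function, as has been the running hypothesis in Section \ref{sec:subm}.

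First I would appeal directly to Proposition \ref{prop:lpt} to obtain the entire submanifold structure of $\Lpt$ in one stroke. That proposition states that $\Lpte$ and $\Lptp$ are totally neat submanifolds (with boundary) of the appropriate boundary faces of $\BBd\times\BBd$ with interiors lying in $\Wte$ and $\Wtp$ respectively; that they each have dimension $d-1$; and that they meet cleanly at their common boundary $\Lptpe\subset\Wtpe$ of dimension $d-2$. This immediately settles the first three bullets of the definition.

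For the two top-dimensional Lagrangian conditions I would proceed as follows. The identity $\alpha^\psi|_{\Lpp}=0$ is precisely the classical statement (recalled after Lemma \ref{lem:Lpchar}) that the manifold of stationary points of a non-degenerate, $1$-homogeneous phase function is a conic Lagrangian submanifold of $T^{*}\RRd\setminus\{0\}$; one simply applies this to the homogeneous principal symbol $\varphi^{\psi}$, using Proposition \ref{prop:prsder} to commute $\sigma_\psi$ with the differentials that cut out $\Cpp$. The identity $\alpha^{e}|_{\Lpe}=0$ is exactly the content of Lemma \ref{lem:alphaelpe}. Thus the one-form conditions hold on the two top-dimensional strata.

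The remaining work lives at the corner. The observation I would exploit is that $\varphi^{\psi e}$ is homogeneous of degree $1$ in \emph{both} sets of variables, so the Euler-identity computation carried out in the proof of Lemma \ref{lem:alphaelpe} can be run twice: once in the $x$-variables, yielding $\alpha^{e}|_{\Lppe}=0$, and once in the $\theta$-variables (this is the symmetric classical calculation), yielding $\alpha^{\psi}|_{\Lppe}=0$. Concretely, using the description of $\Lppe$ from Lemma \ref{lem:Lpchar} and computing $T\Lppe$ as the image under $J(\pr_1,\nabla_x\varphi^{\psi e})$ of the zero locus of $\nabla_\theta\varphi^{\psi e}$ (which is smooth by non-degeneracy, Proposition \ref{prop:ct}), both computations reduce to the identity $\delta\theta\cdot(\nabla_\theta\varphi^{\psi e})=0$ on $\Cppe$. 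The final bullet, the conormality condition $\langle x,\xi\rangle|_{\Lppe}=0$, is just the last lemma of Section \ref{sec:sympl}, already at hand.

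The main (mild) obstacle I foresee is not any single computation but the corner bookkeeping: one must make sure that the two different conifications $\Gamma\times\iota^{-1}$ and $\iota^{-1}\times\Gamma$ agree with the restriction $(\Gamma\times\Gamma)$ along $\Lppe$ in a way compatible with the tangent space computation, so that the same vectors $(\delta x,\delta\theta)$ are legitimately used in both Euler-type arguments. This is precisely guaranteed by the clean intersection statement in Proposition \ref{prop:lpt}(\ref{it:Lpt4}), but it deserves a careful sentence in the write-up. Once that identification is in place, the four ingredients above combine to verify every clause of the definition, yielding the claim.
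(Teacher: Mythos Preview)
Your proposal is correct and matches the paper's approach: the paper presents this theorem explicitly as a summary of the results of the preceding subsection (``We may sum up the results of the previous subsection as follows''), with no separate proof, so the intended argument is precisely the assembly you describe---Proposition~\ref{prop:lpt} for the submanifold structure and clean intersection, the classical result for $\alpha^\psi|_{\Lpp}$, Lemma~\ref{lem:alphaelpe} for $\alpha^e|_{\Lpe}$, and the final lemma of Section~\ref{sec:sympl} for conormality. Your treatment of the corner conditions $\alpha^e|_{\Lppe}=\alpha^\psi|_{\Lppe}=0$ via the double Euler-identity argument is in fact more explicit than anything the paper writes down, but it is exactly the natural extrapolation the reader is expected to supply.
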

%

%
\section{Parametrization of $\SG$-Lagrangians}
\label{sec:sgpf}
In this section we prove our main results. We first investigate, in the next theorem, how one is always able 
to find a $\SG$-classical phase function to locally parametrize the $\SG$-Lagrangians.

\begin{thm}
Let $\Lt=(\Lte,\Ltp)$ be a $\SG$-Lagrangian submanifold. Then $\Lt$ is locally parametrizable by a non-degenerate $\SG$-classical phase function, that is, $\forall (y_0,\eta_0)\in\Lt$ there exist
\begin{enumerate}
\item a neighbourhood $\tilde{U}$ of $(y_0,\eta_0)$ in $\BBd\times\BBd$,
\item an open set $\tilde{V}\subset\BBd\times\BBs$,
\item a function $\tilde{\varphi}\in\tilde{\gamma}^{-1}\tilde{y}^{-1}\Sm(\tilde{U})$ such that the corresponding (locally defined) phase function $\varphi=(\iota\times\iota)^{*}(\tilde{\varphi})$ is non-degenerate,
\end{enumerate}
such that 
$$\Lt\cap\tilde{U}=\lpt\left(\big\{(y_0,\gamma_0)\in\tilde{V}\cap\Bt\,\colon\, (y_0,\gamma_0)\in\Cpt\big\}\right).$$
\end{thm}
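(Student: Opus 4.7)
The strategy is to split into three cases depending on which stratum of $\Wt$ contains $(y_0,\eta_0)$—the open $\psi$-face $(\Ltp)^o$, the open $e$-face $(\Lte)^o$, or the corner $\Ltpe$—and in each case to construct the three principal-symbol pieces $(\varphi^e,\varphi^\psi,\varphi^{\psi e})$ separately, then assemble them into an element of $\SG^{1,1}_\cl$ via Proposition \ref{prop:prspairs-b}. After passing to the conified picture from Lemma \ref{lem:Lpchar}, the vanishing of $\alpha^e$ or $\alpha^\psi$ on the corresponding face (built into the $\SG$-Lagrangian axioms) is precisely the Lagrangian condition needed to invoke a generating-function construction on that face.

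For an interior point $(y_0,\eta_0)\in(\Ltp)^o$, the conification $\Lmp\subset\RRd\times(\RRdz)$ is a classical conic Lagrangian. H\"ormander's classical parametrization theorem yields a non-degenerate homogeneous phase function $\varphi^\psi(x,\theta)\in\mathscr{H}_\theta^1$ in a conic neighborhood of the corresponding point; multiplying by a $0$-excision function $\chi^\psi(\theta)$ and localizing in $x$ (which is legitimate since $y_0\in\BBdo$) produces a $\SG^{1,1}_\cl$-phase function with trivial $e$-principal components, and the local identification $\Lt\cap\tilde U=\Lpt\cap\tilde U$ follows directly from Lemma \ref{lem:Lpchar}. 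The case $(y_0,\eta_0)\in(\Lte)^o$ is handled symmetrically by transporting through the linear symplectomorphism $(x,\xi)\mapsto(\xi,-x)$, which intertwines $\alpha^e$ and $-\alpha^\psi$; this converts $\Lme$ into a classical conic Lagrangian and yields a $\varphi^e\in\mathscr{H}_x^1$ after exchanging the roles of variable and covariable.

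The substantial case is $(y_0,\eta_0)\in\Ltpe$. One first parametrizes the bi-conic isotropic manifold $\Lmpe$ (which satisfies the conormality condition $\langle x,\xi\rangle|_{\Lmpe}=0$) by a function $\varphi^{\psi e}$ jointly $1$-homogeneous in $x$ and $\theta$. One then extends $\varphi^{\psi e}$ in two compatible ways: to a $\psi$-homogeneous $\varphi^\psi$ parametrizing $\Lmp$ with $\sigma_e^1(\chi^e\,\varphi^\psi)=\varphi^{\psi e}$, and to an $e$-homogeneous $\varphi^e$ parametrizing $\Lme$ with $\sigma_\psi^1(\chi^\psi\,\varphi^e)=\varphi^{\psi e}$. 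Proposition \ref{prop:prspairs-b} then assembles the triple $(\varphi^e,\varphi^\psi,\varphi^{\psi e})$ into a symbol $\varphi\in\SG^{1,1}_\cl$. Non-degeneracy in the sense of Definition \ref{def:nondegphase} follows from the non-degeneracy of each piece on its own face, and the identity $\Lt\cap\tilde U=\Lpt\cap\tilde U$ then follows from Lemmas \ref{lem:Cpcl} and \ref{lem:Lpchar}.

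The main obstacle is exactly this simultaneous extension in the corner case: one must choose the H\"ormander-type generating-function coordinate splittings on $\Lmp$ and $\Lme$ compatibly, so that the matching relations $\sigma_e^1(\chi^e\,\varphi^\psi)=\sigma_\psi^1(\chi^\psi\,\varphi^e)=\varphi^{\psi e}$ demanded by Proposition \ref{prop:prspairs-b} hold exactly. The clean intersection of $\Lmp$ and $\Lme$ along $\Lmpe$ (the last statement of Proposition \ref{prop:lpt}) together with the conormality condition are what make such a joint adapted splitting of phase variables possible, but carrying it out requires careful bookkeeping to ensure consistency at the corner.
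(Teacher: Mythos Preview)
Your plan is correct and matches the paper's approach in all essential respects: the case split by stratum, the reduction of the open faces to the classical H\"ormander parametrization, the use of a common adapted coordinate splitting at the corner coming from the clean intersection, the assembly via Proposition \ref{prop:prspairs-b}, and the identification of the conormality condition $\langle x,\xi\rangle|_{\Lmpe}=0$ as the compatibility obstruction.

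The only organizational difference worth noting is that the paper proceeds ``top-down'' rather than ``bottom-up'' in the corner case. Instead of first building $\varphi^{\psi e}$ and then extending it, the paper chooses at $(y_0,\eta_0)\in\Ltpe$ a single splitting $(x',x'';\xi',\xi'')$ with $\xi',x''$ independent on $\Ltpe$, extends it (via the clean intersection) to independent coordinates $(x'',\xi')$ on both $\Lme$ and $\Lmp$, and writes down the two face generating functions directly:
\[
\phi^e(x,\xi)=\langle x',\xi'\rangle+\langle x'',\Xi^e(x'',\xi')\rangle,\qquad
\phi^\psi(x,\xi)=\langle x',\xi'\rangle-\langle X^\psi(x'',\xi'),\xi'\rangle.
\]
The compatibility $\sigma_\psi(\phi^e)=\sigma_e(\phi^\psi)$ is then \emph{computed} and seen to equal exactly $\langle x,\xi\rangle|_{\Lmpe}$, so it vanishes by the axiom. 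This concrete route avoids having to argue abstractly that a compatible extension exists; the explicit formulas make the matching automatic. Your plan would arrive at the same place, but if you find the extension step hard to pin down, switching to the paper's explicit generating functions is the cleanest way through.
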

\begin{proof}
We will only consider the case where $(y_0,\eta_0)\in\Ltpe$, since the other possible situations will be covered by the same argument. The outline of the proof is classical, cf. \cite{HW} and \cite{Hormander3}, but here some tools from the theory of manifolds with corners are
essential to achieve the result, as well as the extension of $\lpt$ and the symplectic structure ``at infinity'' discussed in Subsection \ref{sec:sympl}.\\
Let $(y_0,\eta_0)\in\Ltpe$. $\Ltpe$ is a $(d-2)$-dimensional embedded submanifold of 
$\SSSd\times\SSSd$ and we may assume, possibly after a rearrangement of variables in a neighbourhood $\tilde{U}$ of $(y_0,\eta_0)$, that $\Ltpe$ is parametrized as
$$\tilde{U}\cap\Ltpe=\left\{y^\prime,y^\pp,\sqrt{1-(y^\prime)^2-(y^\pp)^2},\sqrt{1-(\eta^\prime)^2-(\eta^\pp)^2},\eta^\prime,\eta^\pp\right\},$$
where, for some $s\leq d-1$, we have that $\eta^\prime=(\eta_2,\dots,\eta_{s})$ and $y^{\pp}=(y_{s+1},\dots,y_{d-1})$ are independent variables and the remaining variables,
\begin{align*}
y^\prime&=\tilde{Y}^{\psi e}(y^\pp,\eta^\prime),\\
\eta^\pp&=\tilde{H}^{\psi e}(y^\pp,\eta^\prime),
\end{align*}
are smoothly dependent on $(y^\pp,\eta^\prime)$. We may further assume that $y_d$ and $\eta_1$ do not vanish in the chosen coordinate neighbourhood, that is we have, for some $1\geq c>0$, ${y_d}>c$ and $\eta_1>c$.\\
Due to the clean intersection at the corner $\Ltpe=\Lte\cap\Ltp=\partial\Lte=\partial \Ltp$, that is $T_{\Ltpe}\Lte\cap T_{\Ltpe}\Ltp= T\Ltpe$, we may find, accordingly, parametrizations of $\Lte$ and $\Ltp$ near the corner point $(y_0,\eta_0)$, namely
\begin{align*}
\tilde{U}\cap\Lte=\left\{y^\prime,y^\pp,\sqrt{1-(y^\prime)^2-(y^\pp)^2},\eta_1,\eta^\prime,\eta^\pp\right\},\\
\tilde{U}\cap\Ltp=\left\{y^\prime,y^\pp,y_d,\sqrt{1-(\eta^\prime)^2-(\eta^\pp)^2},\eta^\prime,\eta^\pp\right\}.
\end{align*}
Here we have the independent coordinates $(y^\pp,\eta_1,\eta^\prime)$ on $\Lte$ and $(y^\pp,y_d,\eta^\prime)$ on $\Ltp$. The remaining variables on $\tilde{U}\cap\Ltp$ may be written as functions smooth up to the boundary,
\[
y^\prime=\tilde{Y}^e(y^\pp,\eta_1,\eta^\prime),\quad
\eta^\pp=\tilde{H}^e(y^\pp,\eta_1,\eta^\prime),
\]
and on $\tilde{U}\cap\Ltp$ as
\[
y^\prime=\tilde{Y}^\psi(y^\pp,y_d,\eta^\prime),\quad
\eta^\pp=\tilde{H}^\psi(y^\pp,y_d,\eta^\prime).
\]
By $\Lte\cap\Ltp=\partial\Lte=\partial\Ltp=\Ltpe$ we conclude that if 
$$\big(\eta_1,\eta^\prime,\tilde{H}^e(y^\pp,\eta_1,\eta^\prime)\big)\in\SSSd\text{ and }\big(\tilde{Y}^\psi(y^\pp,y_d,\eta^\prime),y^\pp,y_d\big)\in\SSSd$$
we have
\begin{align} \label{eq:yeypsi}
\tilde{Y}^e(y^\pp,\eta_1,\eta^\prime)&=\tilde{Y}^\psi(y^\pp,y_d,\eta^\prime)=\tilde{Y}^{\psi e}(y^\pp,\eta^\prime),\\
\label{eq:yeypsi2}
\tilde{H}^e(y^\pp,\eta_1,\eta^\prime)&=\tilde{H}^\psi(y^\pp,y_d,\eta^\prime)=\tilde{H}^{\psi e}(y^\pp,\eta^\prime).
\end{align}
This choice of coordinates induces coordinates on the associated conifications $\Lme=(\Gamma\times\iota^{-1})(\Lte)$ and $\Lmp=(\iota^{-1}\times\Gamma)(\Ltp)$. That is, we may take, as independent variables on $\Lme$,
\begin{align*}
x^\pp=(\mu y^\pp,\mu \sqrt{1-(y^\prime)^2-(y^\pp)^2}),\qquad
\xi^\prime=\iota^{-1}(\eta_1,\eta^\prime).
\end{align*}
In particular, $x^\pp$ may be defined implicitly in terms of the map
$$(y^\pp,\mu)\mapsto \left(\mu (\id\times\iota)^*\tilde{Y}^e(y^\pp,\xi^\prime),\mu y^\pp,\mu\sqrt{1-((\id\times\iota)^*\tilde{Y}^e(y^\pp,\xi^\prime))^2-(y^\pp)^2}\right).$$
We obtain that $x^\prime=\mu (\id\times\iota)^*\tilde{Y}^e(y^\pp,\xi^\prime)=:X^e(x^\pp,\xi^\prime)$ is a smooth function of $x^\pp$ and $\xi^\prime$ and polyhomogeneous in $\xi^\prime$, of maximal degree $0$. By $|(x^\prime,x^\pp)|=\mu$ it is further $1$-homogeneous in $x^\pp$. Similarly we have that $\xi^\pp=\iota^{-1}\left((\id\times\iota)^*\tilde{H}^e(y^\pp,\xi^\prime)\right)=:\Xi^e(x^\pp,\xi^\prime)$ is $0$-homogeneous in $x^\pp$ and polyhomogeneous in $\xi^\prime$.\\
We can thus write, locally near $(x_0,\xi_0)=(\id\times\iota^{-1})(y_0,\eta_0)$,
\[
\Lme=\left\{\big(X^e(x^\pp,\xi^\prime),x^\pp;\xi^\prime,\Xi^e(x^\pp,\xi^\prime\big)\right\}.
\]
In the same way we may write, in coordinates 
\begin{align*}
x^\pp=\iota^{-1}(y^\pp,y_d),\qquad
\xi^\prime=(\mu \eta_1,\mu \eta^\prime),
\end{align*}
that
\[
	\Lmp=\left\{\big(X^\psi(x^\pp,\xi^\prime),x^\pp;\xi^\prime,\Xi^\psi(x^\pp,\xi^\prime\big)\right\}.
\]
We now define phase functions parametrizing these conic submanifolds in the given neighbourhoods. We set
\begin{align}\label{eq:phiedef}
\phi^e(x,\xi)&=\langle x^\prime,\xi^\prime\rangle + \langle x^\pp,\Xi^e(x^\pp,\xi^\prime)\rangle,
\\ \label{eq:phipsidef}
\phi^\psi(x,\xi)&=\langle x^\prime,\xi^\prime\rangle -\langle X^\psi(x^\pp,\xi^\prime),\xi^\prime\rangle.
\end{align}
By the above definitions of $\Xi^e$ and $X^\psi$ we observe that $\phi^e$ is $1$-homogeneous in $x$ and 1-polyhomogeneous in $\xi$, whereas $\phi^\psi$ is $1$-homogeneous in $\xi$ and polyhomogeneous in $x$. In fact these functions, restricted to suitable neighbourhoods in $\SSSd\times\RRd$ and $\RRd\times\SSSd$, respectively, may be written as 
\begin{align}\label{eq:pte}
\phi^e(x,\xi)|_{\SSSd\times\RRd}&=(\id\times\iota)^{*}\underbrace{\left(\left\langle (y^\prime,y^\pp,y_d)\, ,\,\iota^{-1}\left(\eta_1,\eta^\prime,\tilde{H}^e(y^\pp,\eta_1,\eta^\prime))\right)\right\rangle\right)}_{=:\tilde{y}\cdot\tilde{\phi}^e|_{\Wte}}\\
\phi^\psi(x,\xi)|_{\RRd\times\SSSd}&=(\iota\times\id)^{*}\underbrace{\left(\left\langle \iota^{-1}(y^\prime)-\iota^{-1}\left(\tilde{Y}^\psi(y^\pp,y_d,\eta^\prime)\right),(\eta_1,\eta^\prime)\right\rangle\right)}_{=:\tilde{\eta}\cdot\tilde{\phi}^\psi|_{\Wte}}.\label{eq:ptp}
\end{align}
Using $\iota^{-1}(y)=\frac{y}{|y|}(1-|y|)^{-1}=\tilde{y}^{-1}\frac{y}{|y|}$ for large arguments and Proposition \ref{prop:prschar}, we obtain the desired symbol properties.\\
We now show that $\varphi^e$ and $\varphi^\psi$ may be obtained as the respective principal symbol components of a single $\SG$-phase function. For that we calculate the principal symbols of $\phi^e$ and $\phi^\psi$ by the means of the proof of Proposition \ref{prop:prschar}. Using $\lim_{n\rightarrow \infty}\tilde{y_n}\,\iota^{-1}(y_n)=\frac{y}{|y|}$ in case $y_n\rightarrow y$ with $y_n\in(\BBd)^o$ and $y\in\SSSd$ as well as \eqref{eq:yeypsi} and \eqref{eq:yeypsi2} in \eqref{eq:pte} and \eqref{eq:ptp} we obtain in the corner component
\begin{align*}
\sigma_\psi(\phi^e)|_{\SSSd\times\SSSd}&=(\id\times\id)^*\big\langle (y^\prime,y^\pp,y_d),\left(\eta_1,\eta^\prime,\tilde{H}^{\psi e}(y^\pp,\eta^\prime)\right)\big\rangle\\
\sigma_e(\phi^\psi)|_{\SSSd\times\SSSd}&=(\id\times\id)^*\left\langle y^\prime-\tilde{Y}^{\psi e}(y^\pp,\eta^\prime),\left(\eta_1,\eta^\prime\right)\right\rangle
\end{align*}
and thus we have 
\begin{multline*}
\sigma_\psi(\phi^e)|_{\SSSd\times\SSSd}-\sigma_e(\phi^\psi)|_{\SSSd\times\SSSd}=\\(\id\times\id)^*\left(\left\langle \tilde{Y}^{\psi e}(y^\pp,\eta^\prime),(\eta_1,\eta^\prime)\right\rangle+\left\langle (y^\pp,y_d),\tilde{H}^{\psi e}(y^\pp,\eta^\prime)\right\rangle\right),
\end{multline*}
which is nothing else than $\langle x,\, \xi\rangle$ restricted to $\SSSd\times\SSSd$ in $\Lmpe$ and thus vanishes by the conormality assumption. We are then able to, using \eqref{eq:yeypsi} and \eqref{eq:yeypsi2} and Proposition \ref{prop:prspairs-b}, continue $(\phi^e,\phi^\psi)$ to a single $\SG$-symbol with principal symbol $(\phi^e,\phi^\psi,\phi^{\psi e})$.\\
To have a chance of non-degeneracy, we first reduce the number of phase variables since, so far, the resulting phase function is constant in the $\xi^\pp$-variables. Getting rid of these redundant variables, we may define $\varphi:\RRd\times\RRs\rightarrow \RR$ by $((x^\prime,x^\pp);\theta)\mapsto\phi((x^\prime,x^\pp);(\theta,\xi^\pp_0))$ for some arbitrary $\xi^\pp_0$. We then obtain the components of the principal symbol $\varphi^\bullet=\sigma_\bullet\varphi$ for $\bullet\in\{e,\psi,\psi e\}$ and may define $\tilde{\varphi}\in\tilde{\gamma}^{-1}\tilde{y}^{-1}\Sm(\tilde{U})$ via $(\iota^{-1}\times\iota^{-1})^*\varphi$.\\
We now have to see that the functions $\varphi^\bullet$ indeed parametrize $\Lp$. For that we gather, by $\alpha^\bullet|_{\Lambda^\bullet}=0$, the identities
\begin{align*}
X^e(x^\pp,\xi^\prime)+\nabla_{\xi^\prime} \left(x^\pp\cdot \Xi^e(x^\pp,\xi^\prime)\right)&=0,\\
x^\pp\cdot \partial_{x^\pp_j}\Xi^e(x^\pp,\xi^\prime) &=0 \qquad j\in\{s+1,\dots,d\},\\
\theta\cdot \partial_{\xi^\prime_k} X^\psi(x^\pp,\xi^\prime)&=0 \qquad k\in\{1,\dots,s\}, \\
\nabla_{x^\pp}\left(\theta\cdot X^\psi(x^\pp,\xi^\prime)\right) + \Xi^\psi(x^\pp,\xi^\prime)&=0.
\end{align*}
We may then use these to compute, using \eqref{eq:phiedef} and \eqref{eq:phipsidef},
\begin{align*}
\nabla_\theta\varphi^e(x,\theta)&=x^\prime+\underbrace{x^\pp\cdot\nabla_\theta \Xi^e(x^\pp,\theta)}_{=-X^e(x^\pp,\theta)},\\
\partial_{\theta_k}\varphi^\psi(x,\theta)&=(x^\prime_k - X^\psi_k(x^\pp,\theta)) - \underbrace{\left(\partial_{\theta_k}X^\psi(x^\pp,\theta)\right)\cdot\theta}_{=0}.
\end{align*}
We therefore have $\nabla_\theta\varphi^\bullet=0$ if and only if $x^\prime=X^\bullet(x^\pp,\theta)$, and we have obtained 
\begin{align*}
\Cp^\bullet&=\{\big(X^\bullet(x^\pp,\theta),x^\pp;\theta\big)\}.
\end{align*}
In a similar fashion, using the remaining two identities,
\begin{align*}
	\Lp^\bullet&=
	\left\{\big(X^\bullet(x^\pp,\theta),x^\pp;\theta,\Xi^\bullet(x^\pp,\theta)\big)\right\}=
	\Lambda^\bullet.
\end{align*}
We can thus (locally) parametrize $\Lambda^\bullet$ by $\varphi^\bullet$. 
Finally, we have to check that the symbol $\varphi$ actually defines a phase function, meaning we have to check \eqref{eq:phaseineq}, which is equivalent to $|\nabla_x\varphi^\bullet|+|\nabla_\theta\varphi^\bullet|\neq 0$ on $\mathcal{B}^\bullet$. By assumption $\nabla_\theta$ vanishes only on $\Cp^\bullet$. There, however, we always have $\nabla_x\varphi^\bullet\neq 0,$ since by assumption none of the faces of $\Lp$ contains a point of the form $(x,0)$.\\
The proof is complete.
\end{proof}
Having established that we can always find a (local) parametrizing phase function for such an $\SG$-Lagrangian, we now investigate when two such phase functions may be considered \textit{equivalent}.
\begin{thm}
Let $\tilde{\varphi}_1,$ $\tilde{\varphi}_2\in\Sm(\BBd\times\BBs)$ be two non-degenerate phase functions that parametrize the same Lagrangian $\Lt\subset\WSG$ in a neighbourhood of $(y_0,\eta_0)\in\Lt$ such that
\begin{enumerate}
\item there exists $(y_0,\gamma_{0,1})\in\widetilde{\mathcal{C}}_{\varphi_1}$ and $(y_0,\gamma_{0,2})\in\widetilde{\mathcal{C}}_{\varphi_2}$ such that $(y_0,\eta_0)=\lpt_i(y_0,\gamma_{0,i})$ and $\tilde{\varphi}_1(y_0,\gamma_{0,1})=\tilde{\varphi}_2(y_0,\gamma_{0,2})$,\footnote{We note that this is always fulfilled in the classical case since, by homogeneity, $\varphi_i$ vanishes on $\mathcal{C}_{\varphi_i}$.}
\item The matrices $\left(\tilde{\gamma}^{-1}\tilde{y}\,\widetilde{\partial^2_{\theta_j\theta_k}\varphi_1}|_{X}\right)_{j,k=1,\dots, s}$ and
$\left(\tilde{\gamma}^{-1}\tilde{y}\,\widetilde{\partial^2_{\theta_j\theta_k}\varphi_2}|_{X}\right)_{j,k=1,\dots, s}$
have the same signature at $(y_0,\gamma_{0,i})\in\,\widetilde{\mathcal{C}}_{\varphi_i}$, where $\varphi_i:=(\iota\times\iota)^*\tilde{\varphi}_i$ are the (locally defined) phase functions associated with $\tilde{\varphi}_i$, $i=1,2$.\label{it:hypo2}
\end{enumerate}
Then, there exists a local homeomorphism $\tilde{\kappa}$ of the boundary $\Sct\mapsto\Sct$ that is defined in a neighbourhood of the $(y_0,\gamma_{0,2})$ in the corresponding faces, which is smooth on each face and such that $\tilde{\varphi_2}\circ\tilde{\kappa}=\tilde{\varphi}_1|_{\Sct}$.
\end{thm}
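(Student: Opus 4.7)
The plan is to adapt the classical equivalence theorem for non-degenerate phase functions (cf.~\cite{Hormander6} and Chapter~3 of~\cite{Duistermaat}) to the boundary/corner structure of $\BBd\times\BBs$, by constructing $\tilde{\kappa}$ face by face on the three components $\Scte$, $\Sctp$, $\Sctpe$ of $\Sct$ and then verifying that the face-wise pieces glue consistently at the corner $\Sctpe$.

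First, following the classical reduction, I would normalize $\tilde{\varphi}_1$ and $\tilde{\varphi}_2$ so that they involve the same number of $\theta$-variables, splitting off directions where the $\theta$-Hessian is non-degenerate into quadratic terms of appropriate signature. Hypothesis~(\ref{it:hypo2}) on the equality of signatures is precisely what makes this reduction compatible between the two phase functions, while hypothesis~(1) fixes the base points $(y_0,\gamma_{0,i})$ and the common value of the phase there. By Proposition~\ref{prop:prschar}, the restriction $\tilde{\varphi}_i|_{\Scte}$ coincides, up to the factor $\tilde{\gamma}^{-1}$, with the principal-symbol component $\varphi_i^e$ pulled back along $\mathrm{id}\times\iota$, and the analogous statements hold on $\Sctp$ and $\Sctpe$. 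Thus the restricted functions on each face are honest classical phase functions locally parametrizing the corresponding conified Lagrangian components $\Lme$, $\Lmp$ and $\Lmpe$, in view of Lemma~\ref{lem:Lpchar}.

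Second, I would apply the classical equivalence theorem on each face separately. On $\Scte$ near $(y_0,\gamma_{0,2})$ the classical result yields a fiber-preserving diffeomorphism $\tilde{\kappa}^e$, smooth on $\Scte$, with $\tilde{\varphi}_2^e\circ\tilde{\kappa}^e=\tilde{\varphi}_1^e$, constructed by applying the implicit function theorem to $\nabla_\theta\tilde{\varphi}_i^e=0$ after a Morse-type change of variables; the $\theta$-Hessian is non-degenerate at the base point by non-degeneracy of $\varphi_i^e$, and condition~(\ref{it:hypo2}) guarantees the Morse normalization can be carried out with matching signs. The maps $\tilde{\kappa}^\psi$ on $\Sctp$ and $\tilde{\kappa}^{\psi e}$ on $\Sctpe$ are produced in the same way.

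The main obstacle is the third step: checking that $\tilde{\kappa}^e$, $\tilde{\kappa}^\psi$ and $\tilde{\kappa}^{\psi e}$ agree on the shared boundary $\Sctpe=\partial\Scte\cap\partial\Sctp$, so that the pieces glue into a continuous map on $\Sct$. This should follow from the compatibility of the principal-symbol components, $\sigma_\psi^1(\chi^e\varphi_i^e)=\sigma_e^1(\chi^\psi\varphi_i^\psi)=\varphi_i^{\psi e}$ (Proposition~\ref{prop:prspairs-b}), together with the commutation of the principal-symbol map with differentiation (Proposition~\ref{prop:prsder}), which forces $\nabla_\theta\varphi_i^e$ and $\nabla_\theta\varphi_i^\psi$ to restrict to $\nabla_\theta\varphi_i^{\psi e}$ at the corner. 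Since the local solution produced by the implicit function theorem is unique once the base point and Morse data are fixed, the face-wise maps must coincide on $\Sctpe$, yielding a $\tilde{\kappa}$ that is smooth on each face and continuous (in general not smooth) across the corner, as required.
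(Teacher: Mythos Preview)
Your overall strategy---work face by face and then glue at the corner---matches the paper's, but the gluing step as you describe it has a genuine gap. The diffeomorphism produced by the classical equivalence theorem is \emph{not} uniquely determined by the base point and the Hessian signature: the Morse-type normalization and the subsequent matrix equation $\tilde{W}+{}^t\tilde{W}\,\tilde{C}\,\tilde{W}=\tilde{B}$ involve choices (square roots, orthogonal factors, the homotopy path), so ``uniqueness of the implicit-function-theorem solution'' does not force $\tilde{\kappa}^e|_{\Sctpe}=\tilde{\kappa}^\psi|_{\Sctpe}$. Running the classical argument independently on $\Scte$ and $\Sctp$ will in general produce two maps that disagree along $\Sctpe$.

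The paper avoids this by separating the construction into two stages. First it builds a \emph{canonical} preliminary map on each face, namely $\tilde{\kappa}^\bullet=\widetilde{\Psi}_1^\bullet\circ(\tilde{\Phi}_2|_{\Sct^\bullet})$, where $\tilde{\Phi}_i=(\lpt_i,\,\tilde{y}\widetilde{\nabla_\theta\varphi_i})$ and $\widetilde{\Psi}_i^\bullet$ is its inverse on the face, supplied by the implicit function theorem for manifolds with corners. Because $\tilde{\Phi}_i$ is intrinsically defined and $\widetilde{\Psi}_i^e|_{\Wtpe\times\RRd}=\widetilde{\Psi}_i^\psi|_{\Wtpe\times\RRd}$, these preliminary $\tilde{\kappa}^\bullet$ automatically agree at the corner. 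After this first step, $\tilde{\psi}:=\tilde{\varphi}_2\circ\tilde{\kappa}$ and $\tilde{\varphi}_1$ agree to second order on $\widetilde{\mathcal{C}}_{\varphi_1}$ up to the corner; only then is the H\"ormander matrix/homotopy argument run, face by face, using Seeley extension across $\Sctpe$ so that Taylor expansions around corner points make sense. The signature hypothesis enters at this second stage, not in a Morse reduction of the number of $\theta$-variables (both phases already live on $\BBd\times\BBs$, so no such reduction is needed here).
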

\begin{rem}
Note that the statement only ensures that the principal symbols of the corresponding phase functions $\varphi_i$ may be arranged to agree, that is the triples $(\varphi^e_i,\varphi^\psi_i,\varphi^{\psi e}_i)$. This is, however, not a drawback, since the principal symbols of $\varphi_i$ carry all the information about the asscociated sets of singularities $\Lpt$ and $\Cpt$, by Lemmas \ref{lem:Cpcl} and \ref{lem:Lpchar}.
\end{rem}
\begin{proof}
We assume $(y_0,\eta_0)\in\Ltpe$ since again this case (with slight adaptations) includes the others. Indeed, the case of $\Lptp$ is known from the classical theory and our proof follows the classical outline of  \cite{Hormander6} and \cite{Duistermaat}. We begin by arranging $\tilde{\varphi}_1$ and $\tilde{\varphi}_2$ such that they agree ``up to second order'' on $\widetilde{\mathcal{C}}_{\varphi_1}$. Consider the maps $\tilde{\Phi}_1$, $\tilde{\Phi}_2$ given by
\begin{align*}
(y,\gamma)\mapsto \tilde{\Phi}_i(y,\gamma):&=(\lpt_i,\tilde{y}\widetilde{\nabla_\theta\varphi_i})\in\BBd\times\BBd\times\RRd.
\end{align*}
By Theorem \ref{thm:repr} and Proposition \ref{prop:tl}, these maps are well-defined and smooth up to the boundary in a neighbourhood of $\widetilde{\mathcal{C}}_{\varphi_i}$. By Lemma \ref{lem:Cptchar} and Lemma \ref{lem:Lptchar} we have, for $(y,\gamma)\in\Sct$,
$$(\pr_3\circ\tilde{\Phi}_i)(y,\gamma)=0\Longleftrightarrow (y,\gamma)\in\widetilde{\mathcal{C}}_{\varphi_i}\Longleftrightarrow \tilde{\Phi}_i(y,\gamma)\in\Lt\times\{0\}.$$
By the implicit function theorem, that is Theorem \ref{thm:implicit}, and the non-degeneracy assumption of $\tilde{\varphi}_i$ we may thus locally invert in each face $\Sctp\cup\Sctpe=\BBd\times\SSS^{s-1}$ and $\Scte\cup\Sctpe=\SSSd\times\BBs$ separately, to obtain the two maps defined in neighbourhoods of $(y_0,\gamma_{0,i})$
\begin{align*}
\widetilde{\Psi}^\psi_i:(\Wt^\psi\cup{\Wt^{\psi e}})\times\RRd\rightarrow\BBd\times\SSS^{s-1},\\
\widetilde{\Psi}^e_i:(\Wt^e\cup{\Wt^{\psi e}})\times\RRd\rightarrow\SSSd\times\BBs.
\end{align*}
such that
$$\widetilde{\Psi}_i^\bullet\circ\left(\tilde{\Phi}_i|_{\Sc^\bullet}\right)=\id_{\Sct^\bullet},$$
i.e. we have the commuting diagram
\begin{center}
\begin{tikzpicture}
  \matrix (m) [matrix of math nodes,row sep=3em,column sep=2.2em,minimum width=2em]
  {
     (y_0,\gamma_{0,i}) & \widetilde{\mathcal{C}}_{\varphi_1}^\bullet\cup\widetilde{\mathcal{C}}_{\varphi_1}^{\psi e} & \Sc^\bullet\cup{\Sc^{\psi e}} & \Sc^\bullet\cup{\Sc^{\psi e}} \\
     (y_0,\eta_0) & \widetilde{\Lambda} & (\Wt^\bullet\cup{\Wt^{\psi e}})\times\RRd & (\Wt^\bullet\cup{\Wt^{\psi e}})\times\RRd\\};
    \node (m1112) at ($(m-1-1)!0.5!(m-1-2)$) {$\in$};
    \node (m1123) at ($(m-1-2)!0.5!(m-1-3)$) {$\subset$};
    \node (m1134) at ($(m-1-3)!0.5!(m-1-4)$) {$=$};
    \node (m2212) at ($(m-2-1)!0.5!(m-2-2)$) {$\in$};
    \node (m2223) at ($(m-2-2)!0.5!(m-2-3)$) {$\subset$};
    \node (m2234) at ($(m-2-3)!0.5!(m-2-4)$) {$=$};
	\draw[|->] (m-1-1) -- (m-2-1) node[midway,right] {$\widetilde{\Phi}_i$};
	\draw[->] (m-1-2) -- (m-2-2) node[midway,right] {$\widetilde{\Phi}_i$};
	\draw[->] (m-1-3) -- (m-2-3) node[midway,right] {$\widetilde{\Phi}_i$};
	\draw[<-] (m-1-4) -- (m-2-4) node[midway,right] {$\widetilde{\Psi}_i^\bullet$};	
\end{tikzpicture}
\end{center}
We may set, in a neighbourhood of $(y_0,\eta_0,0)$,
\begin{align*}
\widetilde{\Psi}^\psi_i|_{\Wtpe\times\RRd}=\widetilde{\Psi}^e|_{\Wtpe\times\RRd}=:\widetilde{\Psi}^{\psi e}.
\end{align*}
We also note that $\pr_1\circ\lpt_i=\id$. Therefore, the compositions $\widetilde{\Psi}^\bullet_1\circ \left(\tilde{\Phi}_2|_{\Sct^\bullet}\right)$ induce maps 
$$
\tilde{\kappa}^\bullet\colon
\widetilde{W}^\bullet\subseteq\widetilde{\mathcal{C}}_{\varphi_2}^\bullet\longrightarrow
\widetilde{\mathcal{C}}_{\varphi_1}^\bullet
\colon
(y,\gamma_2)\mapsto\big(y,\gamma_1(y,\gamma_2)\big),
$$
where $\widetilde{W}^\bullet$ is a neighbourhood of $(y_0,\gamma_0)$ in 
$\widetilde{\mathcal{C}}_{\varphi_2}^\bullet$. We thendefine 
$$
\tilde{\psi}:=\begin{cases}\tilde{\varphi}_2\circ\tilde{\kappa}^e & (y,\gamma)\in\Scte\\
\tilde{\varphi}_2\circ\tilde{\kappa}^\psi & (y,\gamma)\in\Sctp.
\end{cases}
$$
This yields a continuous function on the boundary $\Sct$ that is smooth in the interior of each boundary face up to the corner.
If we look at it as the principal symbol of a phase function, by means of Proposition \ref{prop:prschar}, we see that $\psi$ agrees (at the boundary) up to second order with $\varphi_1$ on $\widetilde{\mathcal{C}}_{\varphi_1}$, since their differentials vanish there (recall Lemmas \ref{lem:Cpcl} and \ref{lem:Lpchar}) and both functions are agree at the point $(y_0,\gamma_{0,1})$.\\
We can now essentially argue as in \cite{Hormander6} on each of the two faces. In fact, since all the objects involved are smooth up to the boundary of each face, Seeley's Extension Theorem allows us to extend them smoothly to a \textit{mirror copy} of $\widetilde{S}^\bullet$, across $\widetilde{S}^{\psi e}$. Such extensions, of course, still agree at $\widetilde{S}^{\psi e}$, and it is then possible to consider, for instance, Taylor expansions around points in $\widetilde{S}^{\psi e}$. To simplify the notation, in the sequel we omit the indication $e,\psi$ of the face, since the expressions will be well-defined on both faces. Let $\tilde{\varphi}$ and $\tilde{\psi}$ 
be two non-degenerate phase functions parametrizing the same Lagrangian and agreeing up to second order on $\widetilde{C}_\varphi=\widetilde{C}_\psi$, up to the boundary, in the sense above. Using this and 
the non-degeneracy of $\varphi$, setting $\tilde{h}_j=\tilde{y}\widetilde{\partial_{\theta_j}\varphi}(y,\gamma)$, 
$j=1,\dots,s$, we can write, at any given point in $\widetilde{C}_\varphi$,
\[
	\tilde{y}\tilde{\gamma}\tilde{\psi}(y,\gamma)=
	\tilde{y}\tilde{\gamma}\tilde{\varphi}(y,\gamma)+\sum_{j,k=1}^s\tilde{b}_{jk}(y,\gamma)
	\tilde{h}_j\tilde{h}_k,
\] 
for a symmetric matrix $\tilde{B}=(\tilde{b}_{jk}(y,\gamma))$. The non-degeneracy of $\tilde{\psi}$ is then
equivalent to
\[
	\det(I+\tilde{B} \tilde{A})\not=0 \text{ at $(y_0,\gamma_0)$},
\]
where we have set $\tilde{A}=\left(\tilde{\gamma}^{-1}\tilde{y}\,\widetilde{\partial^2_{\theta_j\theta_k}\varphi}(y,\gamma)\right)_{j,k=1,\dots,s}$. When $\tilde{B}$ is sufficiently small, we can show the equivalence between
$\tilde{\psi}$ and $\tilde{\varphi}$. In fact, by Taylor expansion,
\[
	\tilde{y}\tilde{\gamma}\tilde{\varphi}(y,\delta)=\tilde{y}\tilde{\gamma}\tilde{\varphi}(y,\gamma)+\sum_{j=1}^s
	(\delta_j-\gamma_j)\tilde{\gamma}\widetilde{\partial_{\theta_j}\varphi}(y,\gamma)+
	\sum_{j,k=1}^s\tilde{c}_{jk}(y,\gamma,\delta)(\delta_j-\gamma_j)\,(\delta_k-\gamma_k),
\]
with a symmetric matrix $\tilde{C}=(\tilde{c}_{jk})_{j,k=1,\dots,s}$. Setting
\[
	\delta_j=\gamma_j+\sum_{k=1}^s\tilde{w}_{jk}(y,\gamma)\,h_k,
\]
we prove the assertion if we show that there exist a matrix $\tilde{W}=(\tilde{w}_{j,k})_{j,k=1,\dots,s}$
such that
\[
	\tilde{W}+{^t}\tilde{W}\,\tilde{C}\,\tilde{W}=\tilde{B}.
\]
It is well known that, under the condition that the signatures of $\tilde{A}$ and $\tilde{C}$ agree, this equation has a solution for small $\tilde{B}$, which is in our cases implied by the hypothesis \textit{(\ref{it:hypo2})} and the fact that the two phase functions agree on $\Cpt$. The statement then follows, by determining a continuous family of non-degenerate phase functions $\tilde{\psi}_t$, $t\in[0,1]$, such that $\tilde{\psi}_0=\tilde{\varphi}$
and $\tilde{\psi}_1=\tilde{\psi}$. In fact, two elements $\tilde{\psi}_s$ and $\tilde{\psi}_t$ of such a family will be equivalent for $|s-t|$ sufficiently small. Since the procedure can be performed separately on the two faces,
and $\tilde{\psi}$ and $\tilde{\varphi}$ agree to second order up to the boundary including the corner, they are equivalent also there.\\
The details of this analysis, with reference to \cite{Hormander6}, are left to the reader.
\end{proof}

\appendix


\section{Manifolds with corners}
\label{subs:mfwc}
In this appendix we will present some results from the analysis on manifolds with corners that are employed in the study of $\SG$-Lagrangians. There are different definitions of manifold with corners, see \cite{Melrose4}, and, e.g. \cite{Jo,MO}.
Since in the main part of this document we only deal with finite-dimensional manifolds with corners, here we shortly recall the approach of \cite{MO} in such a case, while in its original formulation it is based on quadrants in general Banach spaces. Therein, the results needed for our purposes (notably, Theorem \ref{thm:linindepdiff} below) are explained in full detail, within the complete presentation of this theory.
\begin{defn}
	\label{def:qdr}
	With $d\in\mathbb{N}$, let $\Lambda\subseteq\{1,\dots,d\}$. The set
	\[
		E^+_{\Lambda,d}=
		\begin{cases}
			\RRd, &\text{if } \Lambda=\emptyset,
		 \\
			\{x\in \RRd\colon x_j\ge0, j\in\Lambda\}, &\text{otherwise},
		\end{cases}
	\]
	is called ($\Lambda$-)quadrant of $\RRd$. The notation $E^+_{j,d}$ is used
	when $\Lambda=\{j\}$. Obviously,
	\[
		E^+_{\Lambda,d}=\bigcap_{j\in\Lambda} E^+_{j,d}.
	\]
\end{defn}
The notion of differentiability on open subsets of a quadrant of $\RRd$ 
can be introduced exactly as on open subsets of $\RRd$.
\begin{defn}
	\label{def:diff}
	Let $U$ be an open subset of $E^+_{\Lambda,d}$, $f\colon U\to \RR^{d^\prime}$
	a map, and $x\in U$. Then, if there exists an element $u\in\cL(\RRd,\RR^{d^\prime})$ such that
	\[
		\lim_{y\to x}\frac{\|f(y)-f(x)-u(y-x)\|}{\|y-x\|}=0,	
	\]
	$\|.\|$ denoting the standard Euclidean norms on $\RRd$, $\RR^{d^\prime}$, 
	$f$ is said to be differentiable at $x$. In such a case, $u$ is called differential of $f$ at $x$ and is denoted by
	$Jf(x)$. If $f$ is differentiable at every $x\in U$, $f$ is said to be differentiable on $U$.
\end{defn}
The notion of differentiability and of differential in Definition \ref{def:diff} is well-defined and coincides with the ordinary one when
$\Lambda=\emptyset$. The basic properties and notions of differentiability, such as continuous differentiability and higher order differentiability, carry over to this notion of differentiation on quadrants. In particular, we call $f$ \textit{of class $\infty$}, or \textit{smooth (up to the boundary)} in a (relatively) open subset $U\subset \RRd$, denoted $f\in\Sm(U)$, if for every $p\in\NN$ the maps 
$J^pf:(\RRd)^{\otimes p}\rightarrow \RR^{d^\prime}$ are continuous and differentiable at every $x\in U$.

Equivalent alternative definitions of smooth maps on $E^+_{\Lambda,d}$ can be given in terms of existence of extensions on open sets of $\RRd$ including $U$, or on neighbourhoods in $\RRd$ of points $x\in U$, which are continuously differentiable of any order with respect to the standard notion, see \cite{MO}, Sections 1.1 and 2.1, for details.
\begin{defn}
	\label{def:atlas}
	Let $X$ be a set. The triple $(U,\nu,E^+_{\Lambda,d})$ is a chart on $X$ if:
	\begin{enumerate}
		\item $U\subseteq X$;
		\item $\nu\colon U\to E^+_{\Lambda,d}$ is an injective map and $\nu(U)$ is an open set of 
		$E^+_{\Lambda,d}$.
	\end{enumerate}
	Let $(U,\nu,E^+_{\Lambda,d})$, $(U^\prime,\nu^\prime,E^+_{\Lambda^\prime,d})$ be charts on $X$.
	They are smoothly compatible if $U\cap U^\prime=\emptyset$ or, if 
	$U\cap U^\prime\not=\emptyset$,
	\begin{enumerate}
		\setcounter{enumi}{2}
		\item $\nu(U\cap U^\prime)$ and $\nu^\prime(U\cap U^\prime)$ are
		open subsets of $E^+_{\Lambda,d}$ and $E^+_{\Lambda^\prime,d}$, respectively;
		\item $\nu^\prime\circ\nu^{-1}\colon\nu(U\cap U^\prime)\to\nu^\prime(U\cap U^\prime)$
		and $\nu\circ \nu^{\prime -1}\colon\nu^\prime(U\cap U^\prime)\to\nu(U\cap U^\prime)$
		are smooth maps.
	\end{enumerate}
	A collection $\cA$ of smoothly compatible charts that cover $X$ is called a \textit{smooth atlas}. As usual, two atlases	
	$\cA$, $\cA^\prime$ are called 
	equivalent if $\cA\cup\cA^\prime$ is an atlas, which yields an equivalence relation. An equivalence class $[\cA]_{\sim}$ 
	is called \textit{smooth differentiable structure on $X$} and the pair 
	$(X,[\cA]_{\sim})$ is called smooth manifold or a $\Sm$-manifold, 
	denoted simply by $X$. If $\Lambda$ cannot be chosen as empty, $X$ is called a smooth manifold with corners.
\end{defn}
Given a $\Sm$-manifold $X$, the set $\{U\subseteq X\colon \text{ $U$ is the domain of a chart on $X$}\}$ is a basis for a topology on $X$. The space of smooth maps among $\Sm$-manifolds $X$ and $Y$, denoted by $\Sm(X,Y)$, is defined in a completely similar fashion to the usual way. In particular the tangent bundle may be defined in a neighbourhood $U$ given by a chart as $U\times \RRd$, and consequently over the full manifold by imposing contravariant transformation behaviour. The differential of a smooth map $f:X\rightarrow Y$ in local coordinates then induces a map $df:TX\rightarrow TY$.
\begin{defn}
	\label{def:indx}
	Let $U$ be an open set of $E^+_{\Lambda,d}$.
	\begin{enumerate}
	\item For $x\in E^+_{\Lambda,d}$, $\mathrm{ind}(x):=\mathrm{ind}_\Lambda(x)=\#\{j\in\Lambda \colon x_j=0\}$;
	\item The set $\{x\in U\colon \mathrm{ind}(x)\ge1\}$ is called boundary of $U$, 
	and denoted $\partial_\Lambda U=\partial U$;
	\item The set $\{x\in U\colon \mathrm{ind}(x)=0\}$ is called interior of $U$,
	and denoted $\mathrm{int}_\Lambda U =\mathrm{int} \, U = U^o$.
	\end{enumerate}
\end{defn}
It can be proved that the value $\mathrm{ind}(x)$ is invariant under smooth diffeomorphisms\footnote{A smooth diffeomorphism is a smooth bijective map $X\rightarrow X$ whose inverse is also smooth.}, that is,
it has an invariant meaning on a manifold $X$. This implies that also the notions of boundary and interior
are invariantly defined on $X$. More generally, for any $k\in\NN$, it is possible to define $\partial^kX$,
the $k$-boundary of $X$, as the set of all points $x\in X$ such that $\mathrm{ind}(x)\ge k$. We set
$\partial X:=\partial^1X$. Moreover, for any $k\in\NN$, the set 
$\{x\in X\colon \mathrm{ind}(x)=k\}$ is denoted by $B_kX$. The set $B_0X$
is called the interior of $X$.
\begin{ex}
\label{ex:ball}%
Consider $d\in\NNz$, $\BB^d=\{y\in\RRd\colon\|y\|\le1\}$, and, for all $j\in\{1,\dots,d\}$,
$(V^+_j,\nu^+_j,E^+_{j,d})$, $(V^-_j,\nu^-_j,E^+_{j,d})$, where
\begin{itemize}
	\item $V^+_j=\{y\in\BBd\colon y_j>0\}$, $V^-_j=\{y\in\BBd\colon y_j<0\}$;
	\item $\displaystyle\nu^+_j(y)=(\dots,y_{j-1},\sqrt{1-(\dots+y_{j-1}^2+y_{j+1}^2+\dots)}-y_j,y_{j+1},\dots)$;
	\item $\displaystyle\nu^-_j(y)=(\dots,y_{j-1},\sqrt{1-(\dots+y_{j-1}^2+y_{j+1}^2+\dots)}+y_j,y_{j+1},\dots)$.
\end{itemize}
Then, it turns out that 
\[
\cA=\{(V^+_j,\nu^+_j,E^+_{j,d})\}_{j=1,\dots,n}
\cup\{(V^-_j,\nu^-_j,E^+_{j,d})\}_{j=1,\dots,n}\cup\{(\BBd)^o,\mathrm{id},\RRd)\}
\]
is a smooth atlas on $\BB^n$. Furthermore, the topology of of the manifold $(\BBd,[\cA])$ is
the usual (subset) topology of $\BBd\subset\RRd$, $\partial\BBd=\SSS^{n-1}$, $\partial^2\BBd=\emptyset$. 
\end{ex}
\begin{prop}
	\label{prop:diffeodx}
	Let $X$, $X^\prime$ be $\Sm$-manifolds,  $f	\colon X\to X^\prime$ 
	a diffeomorphism. Then, for any $k\in\NNz$, $f(\partial^kX)=\partial^kX^\prime$. 
	In particular, if $\partial^2X=\emptyset$, $f$ is a diffeomorphism of $\partial X$
	onto $\partial X^\prime$.
\end{prop}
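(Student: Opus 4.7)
The plan is to build the proof directly on the fact (stated in the paragraph following Definition~\ref{def:indx}) that the index $\mathrm{ind}(x)$ is invariant under diffeomorphisms of quadrant neighbourhoods, hence is a well-defined function on any $\Sm$-manifold with corners. Once this invariance is granted, the equality $f(\partial^k X)=\partial^k X'$ is almost tautological.

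First, I would establish pointwise preservation of the index. Given $x\in X$, pick charts $(U,\nu,(E,\Lambda))$ around $x$ and $(U',\nu',(E',\Lambda'))$ around $f(x)$ with $f(U)\subset U'$. The composition $\nu'\circ f\circ\nu^{-1}\colon\nu(U)\to\nu'(U')$ is a diffeomorphism of relatively open subsets of quadrants, hence by the cited invariance result preserves the index, giving $\mathrm{ind}(x)=\mathrm{ind}(f(x))$. Applying the same argument to $f^{-1}$ shows that $\mathrm{ind}(f^{-1}(x'))=\mathrm{ind}(x')$ for every $x'\in X'$.

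Now by Definition~\ref{def:indx}, $\partial^k X=\{x\in X\colon\mathrm{ind}(x)\ge k\}$, and similarly for $X'$. Because $f$ is bijective and preserves the index on both sides, we immediately obtain $f(\partial^k X)\subseteq\partial^k X'$ and $f^{-1}(\partial^k X')\subseteq\partial^k X$, which together yield $f(\partial^k X)=\partial^k X'$ for every $k\in\NNz$.

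For the ``in particular'' clause, assume $\partial^2 X=\emptyset$. By the equality just proved (with $k=2$) one has $\partial^2 X'=f(\partial^2 X)=\emptyset$ as well, so every point of $\partial X$ (resp.\ $\partial X'$) has index exactly $1$. In any chart $(U,\nu,(E,\Lambda))$ meeting $\partial X$, the boundary is locally $\nu^{-1}(\ker\lambda\cap E^+_\Lambda)$ for a single $\lambda\in\Lambda$, so $\partial X$ inherits an induced $\Sm$-manifold structure (without boundary), and the same holds for $\partial X'$. The restriction $f|_{\partial X}\colon\partial X\to\partial X'$ is a bijection by the previous paragraph; in the chosen boundary charts it is the restriction to $\ker\lambda$ of the smooth map $\nu'\circ f\circ\nu^{-1}$, hence smooth. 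The identical argument applied to $f^{-1}$ shows $f^{-1}|_{\partial X'}$ is smooth, so $f|_{\partial X}$ is a diffeomorphism.

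I do not expect any real obstacle here, since the invariance of the index under local diffeomorphisms is precisely the content quoted from \cite{MO} just after Definition~\ref{def:indx}; the only mildly technical point is making sure that, when $\partial^2X=\emptyset$, the induced smooth structure on $\partial X$ from boundary charts is well defined and compatible under changes of chart, but this follows from the smooth compatibility of the atlas on $X$ restricted to $\ker\lambda$.
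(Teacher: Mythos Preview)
Your argument is correct and is exactly the natural one: index invariance in local charts gives $\mathrm{ind}(f(x))=\mathrm{ind}(x)$ pointwise, which immediately yields $f(\partial^kX)=\partial^kX'$, and the boundary case follows once $\partial^2X=\emptyset$ forces $\partial X$ to be a genuine $\Sm$-manifold. Note, however, that the paper does not supply its own proof of this proposition; it is merely quoted from \cite{MO} in the appendix, so there is nothing to compare against beyond observing that your reasoning is precisely the one underlying the invariance statement the paper cites just after Definition~\ref{def:indx}.
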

It is well known that the finite Cartesian product of manifolds without boundary is a natural, well-defined
construction, which yields another manifold without boundary. However, in the category of manifolds with
boundary (i.e., $\partial^2X=\emptyset$), there is no such a natural finite product construction. It turns out
that the category of manifolds with corners is the suitable one in which to define finite Cartesian products.
\begin{prop}
	\label{prop:prd}
	Let $X,X^\prime$ be $\Sm$-manifolds. Then, there exists a unique $\Sm$-structure $[\cA]$ on $X\times Y$ such that, 
	for every chart $(U,\nu,E^+_{\Lambda,d})$
	on $X$ and every chart $(U^\prime,\nu^\prime,E^+_{\Lambda^\prime,d^\prime})$ 
	on $X^\prime$, $(U\times U^\prime,\nu\times\nu^\prime,E^+_{\Lambda\amalg\Lambda^\prime,d+d^\prime})$,
	$\Lambda\amalg\Lambda^\prime=\Lambda\cup\{d+j^\prime\colon j^\prime\in\Lambda^\prime\}$, 
	is a chart of $(X\times X^\prime,[\cA])$. The pair $(X\times X^\prime,[\cA])$ is called
	the product manifold of $X$ and $X^\prime$.
\end{prop}
\begin{prop}
	\label{prop:prdbis}
	Let $X,X^\prime$ be $\Sm$-manifolds. Then, the following statements hold true.
	\begin{enumerate}
		\item The topology of the product manifold $X\times X^\prime$ is the product topology
		of those on $X$ and $X^\prime$.
		\item For every $(x,x^\prime)\in X\times X^\prime$, 
		$\mathrm{ind}(x,x^\prime)=\mathrm{ind}(x)+\mathrm{ind}(x^\prime)$.
		\item For all $l\in\NNz$, $\displaystyle\partial^l(X\times X^\prime)=\bigcup_{\substack{j+k=l\\j,k\ge0}}
		\partial^jX\times\partial^k X^\prime$. Moreover, $(X\times X^\prime)^o=X^o\times (X^\prime)^o$.
	\end{enumerate}	
\end{prop}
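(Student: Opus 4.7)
The plan is to reduce each of the three statements to a direct verification in the product charts $c\times d=(U\times V,\nu\times\mu,(E\times F,\Lambda\circ p_1\cup M\circ p_2))$ constructed in Proposition \ref{prop:prd}, using that the topology on a $\Sm$-manifold has the collection of chart domains as a basis, and that $\mathrm{ind}$ has an invariant meaning (cf.\ the discussion after Definition \ref{def:indx} and Proposition \ref{prop:diffeodx}).

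For (1), as $(U,\nu,(E,\Lambda))$ ranges over a maximal atlas of $X$ and $(V,\mu,(F,M))$ over one of $Y$, the sets $U\times V$ are precisely the domains of a basis of charts of the product manifold, hence they form a basis for the manifold topology of $X\times Y$. On the other hand, $\{U\}$ and $\{V\}$ are (separately) bases for the topologies of $X$ and of $Y$, so $\{U\times V\}$ is also a basis for the product topology. The two topologies therefore coincide.

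For (2), I will work in a product chart around $(x,y)$. The defining system of the quadrant in $E\times F$ is the disjoint union $\Lambda\circ p_1\cup M\circ p_2$, which is linearly independent in $\cL(E\times F,\RR)$ because $\Lambda$ and $M$ act on complementary factors. A functional $\lambda_j\circ p_1$ vanishes at $(\nu(x),\mu(y))$ if and only if $\lambda_j(\nu(x))=0$, and analogously for the $M\circ p_2$ component; hence the total count of vanishing functionals at $(\nu(x),\mu(y))$ splits as $\mathrm{ind}(x,y)=\mathrm{ind}(x)+\mathrm{ind}(y)$. The invariance of the index under $\Sm$-diffeomorphisms guarantees this formula is independent of the chart choices.

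Finally, (3) is a combinatorial consequence of (2): $(x,y)\in\partial^l(X\times Y)$ is equivalent to $\mathrm{ind}(x)+\mathrm{ind}(y)\ge l$, which in turn holds if and only if one can pick $j,k\ge 0$ with $j+k=l$, $\mathrm{ind}(x)\ge j$ and $\mathrm{ind}(y)\ge k$, i.e.\ $(x,y)\in\partial^j X\times\partial^k Y$; the identity $(X\times Y)^o=X^o\times Y^o$ follows from the case $\mathrm{ind}(x,y)=0\Leftrightarrow \mathrm{ind}(x)=\mathrm{ind}(y)=0$. There is no genuine obstacle in this proposition: the essential content is bookkeeping with definitions, and the only point requiring some care is the appeal to diffeomorphism-invariance of $\mathrm{ind}$ to upgrade the local chart computation in (2) to a well-defined global statement.
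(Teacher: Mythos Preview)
Your argument is correct. Note, however, that the paper does not give its own proof of this proposition: it is stated in the appendix as a recalled fact from \cite{MO}, so there is no in-paper proof to compare against. Your sketch fills in the details appropriately; the reduction to product charts for (1) and (2), and the elementary combinatorics for (3), are exactly the expected verifications, and your explicit appeal to the diffeomorphism-invariance of $\mathrm{ind}$ is the right justification for why the chart computation in (2) is globally meaningful.
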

\begin{ex}
This proposition allows us to construct a differential structure on $\BBd\times\BBs$, $s\in\NNz$, in terms of that in Example \ref{ex:ball}, that turns this set into a manifold with corners of codimension $2$ such that 
$$B_k(\BBd\times\BBs)=
\begin{cases}
(\BBd)^o\times(\BBs)^o & k=0\\
((\BBd)^o\times\SSS^{s-1}\big)\cup\big(\SSSd\times(\BBs)^o\big) & k=1\\
\SSSd\times\SSS^{s-1} & k=2 \\
\emptyset & k>2.
\end{cases}
$$
\end{ex}
It is a remarkable aspect of this theory that the implicit function theorem extends to manifolds with corners,
under a rather mild (and natural) additional condition on boundaries. In the next statement, given a map
$f\colon X\times Y\to Z$, for any $(a,b)\in X\times Y$,
we write $d_{(a,b)}f=(d^X_{(a,b)}f,d^Y_{(a,b)}f)$ with the linear morphisms
$d^X_{(a,b)}f\colon T_aX\to T_{f(a,b)}Z$ and $d^Y_{(a,b)}f\colon T_bY\to T_{f(a,b)}Z$.
\begin{thm}
	\label{thm:implicit}
	Let $X,Y,Z$ be $\Sm$-manifolds, $f\colon X\times Y\to Z$
	a smooth map and $(a,b)\in X\times Y$. Suppose that $d^Y_{(a,b)}f\colon T_bY\to T_{f(a,b)}Z$
	is a linear homeomorphism, and suppose that there are open neighbourhoods $V_a$ of $a$ and
	$V_b$ of $b$ such that $f(V_a\times(V_b\cap\partial Y))\subset \partial Z$.\\
	Then there exist an open neighborhood $W_a$ of $a$, an open neighbourhood $W_b$ of $b$ and a 
	unique map $g\colon W_a\to W_b$ such that $f(x,g(x))=f(a,b)$ for $x\in W_a$. Furthermore:
	\begin{enumerate}
		\item $g(a)=b$, and $g$ is smooth on $W_a$;
		\item for every $x\in W_a$, $d^Y_{(x,g(x))}f$ is a linear homeomorphism and
		\[
			d_xg=-(d^Y_{(x,g(x))}f)^{-1}\circ d^X_{(x,g(x))}f.
		\]
	\end{enumerate}
\end{thm}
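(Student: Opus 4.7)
The plan is to reduce everything to a local problem in coordinate charts, extend $f$ smoothly past the boundary of the $Y$-quadrant, apply the classical Banach-space implicit function theorem to obtain a smooth solution $\tilde g$ defined on a full (non-quadrant) Banach neighborhood, and then verify, using the boundary hypothesis in an essential way, that $\tilde g$ actually lands back in the $Y$-quadrant, so that its restriction yields the desired $g$. The main obstacle will be precisely this last step, since the implicit function produced by the classical theorem has no a priori reason to respect the corner structure.

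First I would pick compatible charts around $a$, $b$, and $f(a,b)$, so that locally $X, Y, Z$ look like open subsets of quadrants $E_X^+, E_Y^+, E_Z^+$ in Banach spaces, with $b$ and $f(a,b)$ corresponding to $0$ in their respective quadrants. For the finite-dimensional case relevant to our applications, smoothness up to a quadrant boundary is equivalent to smooth extendability to an open Banach neighborhood, so I may extend $f$ to a smooth $\tilde f$ defined on a neighborhood of $(a,b)$ in $E_X\times E_Y$. Since $d^Y_{(a,b)}\tilde f=d^Y_{(a,b)}f$ is a linear homeomorphism, the classical Banach-space implicit function theorem produces a unique smooth $\tilde g$ on some open Banach neighborhood $\tilde W_a$ of $a$, with $\tilde g(a)=b$, $\tilde f(x,\tilde g(x))=f(a,b)$, the invertibility of $d^Y\tilde f$ at nearby points, and the derivative formula in statement (2).

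The key step is to show that $g:=\tilde g|_{\tilde W_a\cap E_X^+}$ takes values in $E_Y^+$. Write $\{\mu_i\}_{i\in I_Y}$ for those defining functionals of the $Y$-quadrant with $\mu_i(b)=0$, and $\{\lambda_j\}_{j\in I_Z}$ for those of the $Z$-quadrant with $\lambda_j(f(a,b))=0$. Fix $i\in I_Y$. By hypothesis $f$ sends $V_a\times(\{\mu_i=0\}\cap V_b)$ into $\partial Z$, and the image of $\ker\mu_i\subset T_b E_Y$ under the linear homeomorphism $d^Y_{(a,b)}f$ is a codimension-one subspace of $T_{f(a,b)}E_Z$. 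By continuity and the fact that only finitely many boundary hyperplanes $\ker\lambda_j$ ($j\in I_Z$) pass through $f(a,b)$, this image must coincide with exactly one of them, say $\ker\lambda_{j(i)}$. A short inverse/implicit function argument, relying on the invertibility of $d^Y\tilde f$ nearby, then shows that the two codimension-one smooth sets $\{\mu_i=0\}$ and $\{y:\lambda_{j(i)}(f(x,y))=0\}$ coincide in a neighborhood of $(a,b)$; in particular $\mu_i(y)=0$ is locally equivalent to $\lambda_{j(i)}(f(x,y))=0$.

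Substituting $y=\tilde g(x)$ into this equivalence, and using $f(x,\tilde g(x))=f(a,b)$ together with $\lambda_{j(i)}(f(a,b))=0$, forces $\mu_i(\tilde g(x))=0$ for every $i\in I_Y$ and every $x$ close enough to $a$; for $i\notin I_Y$ we have $\mu_i(\tilde g(a))=\mu_i(b)>0$, so $\mu_i(\tilde g(x))>0$ for $x$ near $a$ by continuity. Hence $\tilde g(x)\in E_Y^+$, and shrinking $\tilde W_a$ appropriately produces quadrant-open neighborhoods $W_a$ of $a$ and $W_b$ of $b$ such that $g:=\tilde g|_{W_a}$ is the desired implicit function; properties (1) and (2) are then inherited directly from the classical theorem, and uniqueness in the quadrant setting follows from uniqueness in the Banach extension by restriction.
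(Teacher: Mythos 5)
The paper itself offers no proof of this statement: Theorem~\ref{thm:implicit} is imported in the appendix from \cite{MO}, so your proposal can only be judged on its own terms. Your overall strategy --- localize to quadrants, extend $f$ smoothly across the boundary of the $Y$-quadrant, apply the classical Banach implicit function theorem to get $\tilde g$, then use the boundary hypothesis to force $\tilde g$ back into the quadrant --- is sensible, and it does go through when $\mathrm{ind}(b)\le 1$. It has a genuine gap at corner points of $Y$.

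The problematic step is the claimed coincidence of $\{\mu_i=0\}$ with $\{y:\lambda_{j(i)}(\tilde f(x,y))=0\}$ \emph{in a full neighbourhood of} $(a,b)$. The hypothesis $f(V_a\times(V_b\cap\partial Y))\subset\partial Z$ constrains $f$ only on the quadrant $E_Y^+$; off the quadrant the extension $\tilde f$ is arbitrary. When $\mathrm{ind}(b)=k\ge 2$, the face $\{\mu_i=0\}\cap E_Y^+$ is merely a quadrant of order $k-1$ inside the hyperplane $\ker\mu_i$, so writing $\{\lambda_{j(i)}\circ\tilde f(x,\cdot)=0\}$ as a graph over $\ker\mu_i$ pins that graph to zero only on this sub-quadrant; outside it the zero set may leave $\{\mu_i=0\}$. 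Concretely, take $E_Y=E_Z=\RR^2$ with the first quadrants, $f=\mathrm{id}$, and $\tilde f(y)=(y_1+\vartheta(-y_2),y_2)$ where $\vartheta$ vanishes on $(-\infty,0]$ to infinite order at $0$ and is positive on $(0,+\infty)$: then $\lambda_1(\tilde f(y))=0$ holds at points with $y_2<0$ and $y_1\ne 0$. Consequently the implication $\lambda_{j(i)}(\tilde f(x,\tilde g(x)))=0\Rightarrow\mu_i(\tilde g(x))=0$ is only available once one already knows that $\tilde g(x)$ lies in the relevant part of the quadrant --- which is precisely what you are trying to establish, so the argument is circular exactly where the corner structure matters. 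A clean repair stays inside the quadrant: your face analysis (made rigorous by a Baire/codimension count rather than ``by continuity'') shows that $f$ maps $V_a\times(B_kY\cap V_b)$ into the affine subspace $\bigcap_{i\in I_Y}\{\lambda_{j(i)}=0\}$, and $d^Y_{(a,b)}f$ restricts to an isomorphism $\bigcap_i\ker\mu_i\to\bigcap_i\ker\lambda_{j(i)}$; since $B_kY\cap V_b$ is \emph{open} in $\bigcap_i\ker\mu_i$, the classical implicit function theorem applies to this restriction without any extension and yields a solution with values in $B_kY\subset E_Y^+$, which uniqueness for the extended problem identifies with $\tilde g$. (A secondary caveat: your use of smooth extension off the quadrant is legitimate only in the finite-dimensional case, as the paper notes after Definition~\ref{def:diff}; the theorem as stated is for Banach manifolds, where \cite{MO} must argue intrinsically.)
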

We now state the definition of a submanifold (with corners) in this setting.
\begin{defn}
	\label{def:subm}
	Let $X$ be a $\Sm$-manifold and $X^\prime\subset X$. Then, $X^\prime$ is a 
	$\Sm$-submanifold of $X$ if, for every $x^\prime\in X^\prime$, there exist:
	\begin{enumerate}
		\item a chart $(U,\nu,E^+_{\Lambda,d})$ of $X$ such that $x^\prime\in U$ and $\nu(x^\prime)=0$;
		\item an integer $d^\prime\in\mathbb{N}$, $d^\prime \le d$, and $\Lambda^\prime\subseteq
		\{1, \dots, d^\prime\}$, such that
		$\nu(U\cap X^\prime)=\nu(U)\cap E^{+}_{\Lambda^\prime, d^\prime}$, and 
		$\nu(U)\cap E^{+}_{\Lambda^\prime, d^\prime}$ is an open subset of $E^{+}_{\Lambda^\prime, d^\prime}$.
	\end{enumerate}
\end{defn}
In particular, $X^o$ is an open submanifold of $X$ and if $\partial^2X=\emptyset$, $\partial X$
is a submanifold of $X$. In general, there is no relation between the boundary of $X$ and that of a submanifold
of $X$. This leads to the definition of special submanifolds, whose boundaries have ``good positions'' within the boundary of the ambient manifold.
\begin{defn}
	\label{def:totneat}
	Let $X^\prime$ be a submanifold of $X$. Then:
	\begin{enumerate}
		\item $X^\prime$ is a \textit{neat submanifold} of $X$ if $\partial X^\prime=(\partial X)\cap X^\prime$;
		\item  $X^\prime$ is a \textit{totally neat submanifold} of $X$ if, for all $x^\prime\in X^\prime$,
		$\mathrm{ind}_{X^\prime}(x^\prime)=\mathrm{ind}_X(x^\prime)$, that is,
		 $B_k X^\prime=X^\prime\cap B_kX$ for any $k\in\NN$.
	\end{enumerate}
\end{defn}
An equivalent condition for $X^\prime$ to be a totally neat submanifold of $X$ is that, for all $x^\prime\in
X^\prime\cap B_kX$,
\[
	\partial X^\prime=(\partial X)\cap X^\prime
	\text{ and }
	T_{x^\prime}X=(d_{x^\prime}j^\prime)(T_{x^\prime}X^\prime)+(d_{x^\prime}j)(T_{x^\prime}B_kX),
\]
where $j^\prime\colon X^\prime\hookrightarrow X$ and $j\colon B_kX\hookrightarrow X$ are the canonical
inclusions. The properties of being a neat or totally neat submanifold are invariant under diffeomorphisms.
\begin{defn}
	\label{def:imm}
	Let $f\colon X\to X^\prime$ be a $\Sm$-map and $x\in X$. $f$ is called (smooth) immersion at $x$
	if there is a chart $(U,\nu,E^+_{\Lambda,d})$ on $X$ such that $\nu(x)=0$, and a chart
	$(U^\prime,\nu^\prime,E^+_{\Lambda^\prime,d^\prime})$ on $X^\prime$ with $\nu^\prime(f(x))=0$,
	such that $f(U)\subseteq U^\prime$, 
	$\nu(U)\subset\nu^\prime(U^\prime)$ and $\nu^\prime\circ
	f_{|U}\circ\nu^{-1}\colon \nu(U)\to\nu(U^\prime)$ is the inclusion map. If $f$ is an immersion $\forall x\in X$, it is called immersion on $X$.
\end{defn}
\begin{thm}
	\label{thm:imm}
	Let $f\colon X\to X^\prime$ be a smooth map and $x\in X$ 
	such that $f(x)\in(X^\prime)^o$. Then, the following
	statements are equivalent:
	\begin{enumerate}
		\item $f$ is an immersion at $x$;
		\item $d_xf$ is an injective map.
	\end{enumerate}
\end{thm}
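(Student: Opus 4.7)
The plan is to handle the two directions separately, with the forward implication essentially tautological and the reverse implication following the classical local-straightening argument from differential topology, adapted to the quadrant framework by exploiting the interior hypothesis on $f(x)$ and the finite-dimensionality of $T_xX$.

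For the direction $(1)\Rightarrow(2)$, I would simply unwind Definition \ref{def:imm}: picking the charts $c=(U,\nu,(E,\Lambda))$ and $c'=(U',\nu',(E',\Lambda'))$ provided by the immersion property at $x$, the local representative $\nu'\circ f_{|U}\circ\nu^{-1}$ is literally the inclusion of $\nu(U)$ into $\nu'(U')$, whose differential at $\nu(x)=0$ is the inclusion $E\hookrightarrow E'$, which is injective. Since the chart maps are diffeomorphisms, $d_xf$ inherits injectivity.

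For the direction $(2)\Rightarrow(1)$, I would first exploit that $f(x)\in (X')^o$ to choose a chart $c'=(U',\nu',(E',\emptyset))$ on $X'$ around $f(x)$ with $\nu'(f(x))=0$, so the target model has no quadrant constraints. Starting from any chart $c=(U,\nu,(E,\Lambda))$ on $X$ at $x$ with $\nu(x)=0$ (shrunken so that $f(U)\subseteq U'$), let $g=\nu'\circ f\circ\nu^{-1}\colon \nu(U)\to E'$. Since $T_xX\cong E$ is finite dimensional, $E$ is a finite-dimensional Euclidean space, so by the equivalence between the quadrant notion of smoothness on $E^+_\Lambda$ and the existence of a smooth extension to an ambient open neighbourhood (the equivalence recalled after Definition \ref{def:diff} in the finite-dimensional case, cfr.\ \cite{MO}), we may extend $g$ to a smooth map $\tilde g\colon V\to E'$ on an open neighbourhood $V$ of $0$ in $E$. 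The hypothesis $d_xf$ injective translates to $D\tilde g(0)\colon E\to E'$ injective, and its image $D\tilde g(0)(E)$ is a finite-dimensional, hence closed, subspace of $E'$, admitting a closed topological supplement $F\subset E'$.

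The key step is then a standard inverse-function-theorem straightening: define $h\colon V\times F\to E'$ by $h(v,w)=\tilde g(v)+w$. Then $Dh(0,0)\colon E\times F\to E'$ is the continuous linear map $(v,w)\mapsto D\tilde g(0)v+w$, which is bijective (and hence, by the open mapping theorem and finite-dimensionality of $E$, a linear homeomorphism) because $D\tilde g(0)(E)$ and $F$ are complementary. By the inverse function theorem in Banach spaces, $h$ is a diffeomorphism from some open neighbourhood $W$ of $(0,0)$ onto an open neighbourhood $W'$ of $0$ in $E'$. Shrinking $U$ so that $g(\nu(U))\subseteq W'$ and $\nu(U)\times\{0\}\subseteq W$, and replacing $\nu'$ by $h^{-1}\circ\nu'$ (which is still a valid chart on $X'$ at $f(x)$, modelled on $(E\times F,\emptyset)$ since $f(x)\in(X')^o$), the local representative of $f$ becomes $v\mapsto h^{-1}(g(v))=(v,0)$, i.e.\ the inclusion of the closed subspace $E$ (with closed topological supplement $F$) into $E\times F$. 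This verifies all requirements of Definition \ref{def:imm}.

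The main obstacle is not a single calculation but ensuring that the quadrant structure does not obstruct the classical straightening: this is where the two hypotheses are both essential. The interior condition $f(x)\in(X')^o$ lets us pick a boundary-free chart on $X'$, so that the target model is an honest open set of a Banach space and the inverse function theorem applies unambiguously; the finite-dimensionality of $T_xX$ lets us smoothly extend the quadrant-defined map $g$ past the corner at $\nu(x)$, which is what makes $D\tilde g(0)$ and hence $Dh(0,0)$ well-defined as maps on the full ambient Banach space. Without either, one would have to argue directly on $E^+_\Lambda$, and the inverse function theorem as stated would no longer produce a chart of the required form.
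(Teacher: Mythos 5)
The paper itself gives no proof of Theorem~\ref{thm:imm}: it appears in the appendix as a result recalled from \cite{MO} (cf.\ Prop.\ 4.2.3 there), so there is no in-text argument to compare yours against. Judged on its own, your proof is correct and is the standard one. The direction $(1)\Rightarrow(2)$ is indeed immediate from Definition~\ref{def:imm}, since the local representative is an inclusion and chart maps induce isomorphisms on tangent spaces. For $(2)\Rightarrow(1)$ you correctly isolate the two places where the hypotheses enter: $f(x)\in(X^\prime)^o$ lets you take $\Lambda^\prime=\emptyset$ in the target chart (after shrinking $U^\prime$ so that its image avoids the boundary of the quadrant), and finite-dimensionality of $E\cong T_xX$ is exactly what makes the quadrant notion of smoothness equivalent to the existence of a smooth extension $\tilde g$ past the corner, which in turn is what allows $h(v,w)=\tilde g(v)+w$ to be defined on an open set so that the Banach inverse function theorem applies. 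The verification that $Dh(0,0)$ is a linear homeomorphism is also right: injectivity of $D\tilde g(0)$ plus finite-dimensionality gives a closed image with closed supplement $F$, and the continuous linear bijection $E\times F\to E^\prime$ is a homeomorphism. Two cosmetic points you may wish to make explicit: the new chart $(U^{\prime\prime},h^{-1}\circ\nu^\prime,(E\times F,\emptyset))$ must be checked to be smoothly compatible with the atlas of $X^\prime$ (immediate, since $h$ is a diffeomorphism onto its image), and Definition~\ref{def:imm} literally asks that $E$ \emph{be} a closed subspace of the target model space, so one should either identify $E$ with $E\times\{0\}$ by modifying the chart $\nu$ accordingly or remark that this identification is harmless. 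Neither affects the validity of the argument.
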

We now recall the definition of embeddings in this context, and describe how they can be
characterized.
\begin{defn}
	\label{def:emb}
	Let $f\colon X\to X^\prime$ be a map of class $p$. Then, $f$ is called embedding
	if it is an immersion and $f\colon X\to f(X)$ is a homeomorphism.
\end{defn}
We may now give a characterization of embedded submanifolds.
\begin{prop}
	\label{prop:embchar}
	Let $X,X^\prime$ be $\Sm$-manifolds and $f\colon X\to X^\prime$ a map.
	Then, the following statements are equivalent:
	\begin{enumerate}
		\item $f$ is a smooth embedding;
		\item $f(X)$ is a $\Sm$-submanifold of $X^\prime$ and $f\colon X\to
		f(X)$ is a diffeomorphism.
	\end{enumerate}
\end{prop}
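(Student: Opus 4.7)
The statement is a biconditional, so the plan is to prove both implications separately, leveraging the definitions of immersion (\ref{def:imm}) and submanifold (\ref{def:subm}) together with the chart machinery for manifolds with corners.

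For the forward implication $(1) \Rightarrow (2)$: assume $f \colon X \to X^\prime$ is a smooth embedding. At each $x \in X$, the immersion property produces charts $c=(U,\nu,(E,\Lambda))$ on $X$ centered at $x$ and $c^\prime=(U^\prime,\nu^\prime,(E^\prime,\Lambda^\prime))$ on $X^\prime$ centered at $f(x)$, with $E$ a closed linear subspace of $E^\prime$ admitting a topological supplement, $f(U) \subset U^\prime$, and the coordinate representation $\nu^\prime \circ f|_U \circ \nu^{-1}$ equal to the inclusion $\nu(U) \hookrightarrow \nu^\prime(U^\prime)$. To upgrade this local picture to the submanifold condition of Definition~\ref{def:subm}, I would invoke the hypothesis that $f \colon X \to f(X)$ is a homeomorphism: this forces $f(U)$ to be open in $f(X)$, so there is an open $U^{\prime\prime} \subset X^\prime$ with $f(X)\cap U^{\prime\prime} = f(U)$. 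After shrinking $U^\prime$ to $U^\prime \cap U^{\prime\prime}$ (and $U$ to $f^{-1}(U^\prime \cap U^{\prime\prime})$), one has $f(X) \cap U^\prime = f(U)$, and then $\nu^\prime(f(X)\cap U^\prime) = \nu(U)$ is an open subset of $E^+_\Lambda$, which is exactly the requirement for $f(X)$ to be a smooth submanifold. With these charts in place, $f$ appears as the identity in coordinates, so $f \colon X \to f(X)$ is smooth with smooth inverse, i.e.\ a diffeomorphism.

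For the converse $(2) \Rightarrow (1)$: smoothness of $f \colon X \to X^\prime$ follows by composing the diffeomorphism $f \colon X \to f(X)$ with the inclusion $f(X) \hookrightarrow X^\prime$ (which is smooth as a morphism from a submanifold), and the homeomorphism-onto-image condition is part of the hypothesis. To verify the immersion property at $x \in X$, I would take a submanifold chart $(U^\prime,\nu^\prime,(E^\prime,\Lambda^\prime))$ of $X^\prime$ centered at $f(x)$, supplied by Definition~\ref{def:subm} with a closed linear subspace $E \subset E^\prime$ (with topological supplement) and a linear system $\Lambda$ on $E$ such that $\nu^\prime(f(X)\cap U^\prime) = \nu^\prime(U^\prime)\cap E^+_\Lambda$. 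Pulling back along the diffeomorphism $f$ yields a chart $(f^{-1}(f(X)\cap U^\prime),\, \nu^\prime \circ f,\, (E,\Lambda))$ on $X$, and against this pair of charts $f$ is literally the inclusion $E^+_\Lambda \cap \nu^\prime(U^\prime) \hookrightarrow \nu^\prime(U^\prime)$, which is precisely the immersion normal form.

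The main obstacle is the shrinking step in the forward direction: without the global topological hypothesis that $f$ is a homeomorphism onto its image, the local immersion-chart picture need not coincide with the global image inside a neighborhood of $f(x)$ (a generic immersion may have self-intersections or accumulations), and one cannot conclude that $f(X) \cap U^\prime = f(U)$. The corners-with-boundary framework does not cause additional trouble here because the immersion definition already builds in the requirement that the target quadrant structure $\Lambda^\prime$ on $E^\prime$ restricts compatibly, so the submanifold slice $E^+_\Lambda$ inherits a well-defined quadrant structure; no totally-neat or neat condition is imposed, in agreement with the wording of Definition~\ref{def:subm}.
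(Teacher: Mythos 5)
The paper itself gives no proof of Proposition~\ref{prop:embchar}: it appears in the appendix, which is an unproved summary of the theory of manifolds with corners quoted from \cite{MO}. So there is no ``paper proof'' to compare against; judged on its own, your argument is the standard one and is essentially correct. The converse direction is clean: smoothness by composing with the inclusion of the submanifold, the homeomorphism condition is immediate, and the pulled-back submanifold chart $(f^{-1}(f(X)\cap U^\prime),\,\nu^\prime\circ f,\,(E,\Lambda))$ exhibits $f$ in the immersion normal form of Definition~\ref{def:imm}. Your closing remark about where the hypothesis ``homeomorphism onto the image'' is genuinely used (to rule out self-intersections and accumulation of the image near $f(x)$) is exactly the right thing to emphasize.

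One step in the forward direction should be tightened. Definition~\ref{def:subm} requires the \emph{equality} $\nu^\prime\bigl(U^\prime\cap f(X)\bigr)=\nu^\prime(U^\prime)\cap E^{+}_{\Lambda}$, whereas after your shrinking you have only established $\nu^\prime\bigl(U^\prime\cap f(X)\bigr)=\nu(U)$, an open subset of $E^{+}_{\Lambda}$ contained in $\nu^\prime(U^\prime)$; the set $\nu^\prime(U^\prime)\cap E^{+}_{\Lambda}$ could a priori be strictly larger. This is repaired by one further shrinking: choose an open $W\subseteq E^\prime$ with $W\cap E^{+}_{\Lambda}=\nu(U)$ (possible since $\nu(U)$ is relatively open in $E^{+}_{\Lambda}$) and replace $U^\prime$ by $U^\prime\cap(\nu^\prime)^{-1}(W)$, noting that $f(U)$ survives this shrinking because $\nu^\prime(f(U))=\nu(U)\subseteq W$. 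With that line added, the argument is complete.
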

The next result, \cite[Prop. 4.2.10]{MO}, with which we conclude this appendix, 
shows that also on manifolds with corners the 
solutions to systems of equations give rise to submanifolds, provided that the corresponding 
differentials are linearly independent.
\begin{thm}
	\label{thm:linindepdiff}
	Let $X$ be a smooth manifold and $f_1, \dots, f_s\colon X\to \RR$ be $\Sm(X)$-maps. Consider the set $Y=\{x\in X\colon f_1(x)=\dots=f_s(x)=0\}$, and suppose that, for every 
	$x\in Y$, $(d_x(f_1|_{B_kX}),\dots, d_x(f_s|_{B_kX}))$ is a linearly independent system of elements of
	$(T_x(B_kX))^*$, where $k=\mathrm{ind}(x)$. Then we have
	\begin{enumerate}
		\item $Y$ is a closed totally neat $\Sm$-submanifold of $X$;
		\item $T_x(j)(T_xY)=\{v\in T_xX\colon T_xf_1(v)=\dots=T_xf_n(v)=0\}$, where $j\colon Y\to X$ is
		the inclusion map and $x\in Y$;
		\item For all $x\in Y$, $\mathrm{codim}_x Y=s$.
	\end{enumerate}
\end{thm}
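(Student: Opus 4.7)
My plan is to prove this as a manifold-with-corners version of the regular value theorem, by reducing locally to the classical implicit function theorem (Theorem~\ref{thm:implicit}) applied transversally to the stratification. Closedness of $Y$ is immediate because $Y$ is the joint zero set of continuous functions. The remaining content is local, so I fix $x_0\in Y$ with $k=\mathrm{ind}(x_0)$ and work in a chart $(U,\nu,(\RR^d,\Lambda))$ centered at $x_0$. After shrinking $U$ and renumbering, I may assume that the boundary-defining functionals are $\lambda_1,\dots,\lambda_k$ (the remaining $\lambda_j$ being strictly positive on $U$) and that $\nu$ identifies $U$ with a neighborhood of the origin in $\{y\in\RR^d\colon y_1,\dots,y_k\ge0\}$, so that the stratum $B_kX$ corresponds locally to $\{y'=0\}$, where $y=(y',y'')$ with $y'=(y_1,\dots,y_k)$ and $y''=(y_{k+1},\dots,y_d)$.

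In these coordinates the hypothesis becomes: the covectors $(\partial_{y''}f_i)(0)$, $i=1,\dots,s$, are linearly independent as elements of $(\RR^{d-k})^*$. After a further permutation of the $y''$-components I may assume the $s\times s$ block $(\partial_{y''_j}f_i(0))_{i,j=1,\dots,s}$ is invertible. I now invoke Theorem~\ref{thm:implicit} on the product structure $X'\times Y'\to Z$ with $X'=\RR^k_+\times\RR^{d-k-s}$ (coordinates $(y',y''_{s+1},\dots,y''_{d-k})$), $Y'=\RR^s$ (coordinates $(y''_1,\dots,y''_s)$), and $Z=\RR^s$; since $\partial Y'=\partial Z=\emptyset$, the extra hypothesis of Theorem~\ref{thm:implicit} is vacuous. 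The invertibility of the $s\times s$ block is precisely the linear-homeomorphism condition on $d^{Y'}_{(0,0)}F$, so there is a unique smooth $g$ defined near the origin with
\[
	Y\cap U=\bigl\{(y',y''_{s+1},\dots,y''_{d-k})\in W\colon y'\ge0,\ (y''_1,\dots,y''_s)=g(y',y''_{s+1},\dots,y''_{d-k})\bigr\}.
\]
This exhibits $Y$ near $x_0$ as the graph of $g$ over an open subset of $\RR^k_+\times\RR^{d-k-s}$, and the graph parametrization, composed with $\nu^{-1}$, is a smooth embedding whose image equals $Y\cap U$. Hence $Y$ is a $\Sm$-submanifold of $X$ per Definition~\ref{def:subm}, and the quadrant structure of the graph chart uses exactly the same $k$ boundary-defining linear forms $y_1,\dots,y_k$ as the ambient chart: a point of $Y$ at which $l$ of $y_1,\dots,y_k$ vanish has index $l$ in $Y$ and index $l$ in $X$, so $B_lY=Y\cap B_lX$ for every $l$, which is the totally neat condition of Definition~\ref{def:totneat}. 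This gives~(1).

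For~(2) and~(3), differentiating the graph relation $f_i(y',y''_1,\dots,y''_s,y''_{s+1},\dots,y''_{d-k})\equiv0$ along $Y$ shows that $(dj)_{x_0}(T_{x_0}Y)$ is contained in the common kernel of $d_{x_0}f_1,\dots,d_{x_0}f_s$. The graph chart exhibits $T_{x_0}Y$ as a subspace of dimension $d-s$, and the common kernel has dimension at least $d-s$ since the $d_{x_0}f_i$ span at most an $s$-dimensional space; the converse inequality follows from the invertibility of the $s\times s$ block, which forces the $d_{x_0}f_i$ to be linearly independent on $T_{x_0}X$. Hence the common kernel coincides with $(dj)_{x_0}(T_{x_0}Y)$, giving the tangent formula in~(2), and $\mathrm{codim}_{x_0}Y=s$, giving~(3). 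The main obstacle I anticipate is purely bookkeeping: making sure the permutation of $y''$-coordinates produced by the hypothesis (linear independence on the stratum $B_kX$) really is a permutation of the \emph{interior} coordinates $y_{k+1},\dots,y_d$ and never needs to involve the boundary-defining coordinates $y_1,\dots,y_k$; this is exactly why the hypothesis is phrased as linear independence of $d_{x_0}(f_i|_{B_kX})$ rather than of the full differentials $d_{x_0}f_i$, and it is what makes the implicit function resolution compatible with the stratification and ensures total neatness rather than merely neatness.
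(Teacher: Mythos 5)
Your proof is correct, but note that the paper does not actually prove this statement: it is quoted verbatim from the reference \cite[Prop.\ 4.2.10]{MO} as part of the appendix's summary of the corner calculus, so there is no in-paper argument to compare against. What you have written is the standard regular-value-theorem argument, reduced to the paper's Theorem~\ref{thm:implicit}, and it is sound; in particular you correctly isolate the one point where the corner structure matters, namely that the hypothesis of linear independence of $d_x(f_i|_{B_kX})$ (rather than of the full $d_xf_i$) lets you resolve the equations with respect to \emph{interior} coordinates only, so the implicit-function splitting has $\partial Y'=\emptyset$, the extra boundary hypothesis of Theorem~\ref{thm:implicit} is vacuous, and the boundary-defining coordinates $y_1,\dots,y_k$ survive untouched into the graph chart — which is exactly what yields total neatness ($\mathrm{ind}_Y=\mathrm{ind}_X$ pointwise) and not merely neatness. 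Two routine steps are glossed over and would deserve a sentence each in a written-up version: (i) passing from "there is a unique map $g$ with $F(x,g(x))=0$" to "the zero set near $x_0$ \emph{equals} the graph of $g$" (this needs local injectivity of $y''\mapsto F(x,y'')$, which follows from the invertibility of the $s\times s$ block at all nearby points); and (ii) matching Definition~\ref{def:subm} literally, which asks for a chart in which $Y$ is the intersection with a quadrant of a \emph{linear} subspace — compose your chart with the shear $(y',y'')\mapsto(y',y''_1-g_1,\dots,y''_s-g_s,y''_{s+1},\dots)$, which preserves the quadrant since it fixes $y'$, or simply invoke Proposition~\ref{prop:embchar} for the graph embedding. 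The tangent-space and codimension computations in (2) and (3) are correct (the "$n$" in item (2) of the statement is a typo for "$s$").
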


\bibliographystyle{amsalpha}

%

\address{Dipartimento di Matematica ``G. Peano''\newline
\indent
Universit\`a degli Studi di Torino\newline
\indent
V. C. Alberto, n. 10, I-10126 Torino, Italy\\}
\email{\indent sandro.coriasco@unito.it\\ \smallskip}

\address{\indent Institut f\"ur Analysis\newline
\indent
Leibniz Universit\"at Hannover \newline
\indent
Welfengarten 1, D-30167 Hannover, Deutschland\\}
\email{\indent rschulz@math.uni-hannover.de}

\end{document}